\documentclass[11pt]{amsart}
\usepackage[margin=1.5in]{geometry}

\usepackage{mathrsfs}
\usepackage{amsfonts}
\usepackage{latexsym,epsfig}
\usepackage{mathabx}
\usepackage{amsmath, amscd, amsthm,amssymb}
\usepackage[pdftex]{color}
\usepackage{comment}
\usepackage{marginnote}
\usepackage[colorlinks,citecolor=blue,linkcolor=red]{hyperref}
\usepackage[nameinlink]{cleveref}
\usepackage{enumitem}
\usepackage{enumitem}
\usepackage{pinlabel}
\usepackage[dvipsnames]{xcolor}

\newcommand{\Z}{\mathbb{Z}}

\newcommand{\R}{\mathbb{R}}

\newcommand{\wt}{\widetilde}
\newcommand{\mc}{\mathcal}
\newcommand{\mf}{\mathfrak}

\newcommand{\ol}{\overline}
\newcommand{\wh}{\widehat}

\newcommand{\mr}{\mathring}

\newcommand{\cut}{\backslash \!\! \backslash}

\usepackage{todonotes}

\usepackage{hyperref}
\usepackage{pinlabel}

\newtheorem{theorem}{Theorem}[section]
\newtheorem{lemma}[theorem]{Lemma}
\newtheorem{proposition}[theorem]{Proposition}
\newtheorem{corollary}[theorem]{Corollary}

\newtheorem{claim}{Claim}

\theoremstyle{definition}

\newtheorem{remark}[theorem]{Remark}

\newcommand{\FF}{\mathcal{F}}

\newcommand{\orb}{\mathcal{O}}

\newcommand\tsim{\kern-.4em\sim}

\newcommand\ssm{\smallsetminus}

\renewcommand{\phi}{\varphi}
\renewcommand{\epsilon}{\varepsilon}

\newcommand{\pp}{\mathcal{P}}

\title[Depth one laminations transverse to pseudo-Anosov flows]{Constructing depth one laminations transverse to pseudo-Anosov flows}

\author{Junzhi Huang and Samuel J. Taylor}

\begin{document}

\begin{abstract}
Given a pseudo-Anosov flow $\phi$ on a closed atoroidal $3$--manifold $M$ and a closed surface $S$ almost transverse to $\phi$, we give a homological characterization of when $S$ can be completed to an almost transverse depth one lamination or foliation whose set of compact leaves is $S$. As a consequence, we show that the cone of classes in $H^1(M\cut S)$ that are positive on the closed orbits of $\phi$, when nonempty, is an entire foliation cone of $M\cut S$.
\end{abstract}

\maketitle

\setcounter{tocdepth}{1}
\tableofcontents

\section{Introduction}
\label{sec:intro}

Given a pseudo-Anosov flow $\varphi$ on a closed atoroidal $3$--manifold $M$ and an embedded transverse surface $S$, it is easy to see that $S$ can be completed to a transverse fibration of $M$ if and only if $S$ is a cross section of $\varphi$, i.e. $S$ is transverse to and meets every orbit of $\varphi$. 
Indeed, a cross section $S$ admits a well-defined first return map under $\varphi$ and so in this case $\phi$ is naturally orbit equivalent to the suspension flow of its first return map.
However, even when $\varphi$ is a suspension flow (that is, $\phi$ admits \emph{some} cross section), a given transverse surface $S$ may fail to intersect some orbits of $\varphi$.
Such phenomenon occurs precisely when the second homology class of $S$ lies on the boundary of the fiber cone associated to $\phi$. In this case, one can still ``spin" a cross section of $\phi$
around $S$ and produce a transverse depth one foliation of $M$ with $S$ being the unique closed leaf. (This idea goes back to Thurston \cite{thurston1986norm}, but an explicit description can be found, for example, in \cite[Section 2.3]{landry2023endperiodic}.) Hence, for a suspension pseudo-Anosov flow every transverse surface can be completed to either a transverse depth zero foliation (i.e. a fibration) or depth one foliation.

\smallskip
Since a general pseudo-Anosov flow $\varphi$ does not admit a cross section, it is then natural to ask when a given surface $S$ transverse to $\varphi$ can be completed to a transverse foliation, or more generally, a transverse lamination. Here, we consider the question also for surfaces that are almost transverse to $\varphi$.
 
 \smallskip

Our main theorem provides a homological characterization of when such an extension is possible, if we require the completed lamination or foliation to be of \emph{depth one}. This means that not all leaves are compact and for any non-compact leaf $L$, $\ol{L} \ssm L$ consists only of compact leaves. Recall that a closed surface $S$ is said to be \emph{almost transverse} to a pseudo-Anosov flow $\phi$ if there is a dynamic blowup of $\phi$ that is transverse to $S$ (see \Cref{sec:pA}). Let $\phi^\#$ be a blownup transverse flow.
We say a class $\eta \in H^1(M\cut S)$ is \emph{nonnegative} or \emph{positive} if it is nonnegative or positive on each oriented closed orbit of $\phi^\sharp$ that is contained in $M\cut S$. This definition does not depend on the choice of the transverse blowup; see \Cref{sec:nonneg-class}. 
Here, $M \cut S$ is the compact manifold with boundary obtained by cutting $M$ along $S$.

For a lamination or foliation $\mc L$, let $\mc L^0$ denote its compact leaves.
\begin{theorem}\label{thm:depth-one}
Let $\varphi$ be a pseudo-Anosov flow on a closed atoroidal $3$-manifold $M$ and let $S$ be a closed almost transverse surface. Then $S$ can be completed to an almost transverse depth one lamination (resp. foliation) $\mc L$ with $\mc L^0 = S$ if and only if there exists a nonnegative (resp. positive) class in $H^1(M \cut S)$. 
\end{theorem}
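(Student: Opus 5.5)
Both directions go through the transverse blowup $\phi^\sharp$, which is transverse to $S$. Set $N = M\cut S$. Then $\phi^\sharp$ restricts to a semiflow on $N$ that enters along $\partial_- N$ and exits along $\partial_+ N$.

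\emph{Necessity.} Suppose $\mc L$ is an almost transverse depth one lamination with $\mc L^0 = S$. After isotopy, $\mc L$ is transverse to a dynamic blowup $\phi^\sharp$; since nonnegativity does not depend on the choice of blowup (\Cref{sec:nonneg-class}), we may use this one. The noncompact leaves of $\mc L$ lie in the interior of $N$ and spiral onto $\partial N$. By the standard depth one structure (Thurston, Cantwell--Conlon), each component of $\mc L\cap \mathrm{int}(N)$ is a lamination that fibers over $S^1$ off a compact set, in the following sense. There is a closed $1$-form $\omega$ on $N$, which blows up in a product collar of $\partial N$ in the usual way, and whose kernel contains the tangent planes to the noncompact leaves. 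Let $\eta=[\omega]\in H^1(N)$. We orient $\omega$ to be positive along the transverse flow. Every closed orbit $\gamma$ of $\phi^\sharp$ in $N$ then satisfies $\eta(\gamma)=\int_\gamma\omega\ge 0$, because $\gamma$ is transverse to the leaves wherever it meets them. In the foliation case every point of $\mathrm{int}(N)$ lies on a noncompact leaf, so $\gamma$ actually crosses a leaf, and the integral is strictly positive. The lamination case needs more care, since orbits in complementary regions may miss $\mc L$ entirely. There I would define $\eta$ by counting signed intersections with a single noncompact leaf $L$ (a properly embedded surface in $\mathrm{int}(N)$ dual to $\eta$). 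This still gives $\eta(\gamma)\ge 0$, with possible equality for orbits disjoint from $\mc L$.

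\emph{Sufficiency.} This is the main direction. Given a nonnegative $\eta\in H^1(N)$, I would build a transverse surface in $N$ Poincaré--Lefschetz dual to $\eta$ and spiral it onto $\partial N$. The steps, in order, are as follows.
\begin{enumerate}
\item Use a Fried/Schwartzman-type criterion in the relative setting. Because $\partial N$ is transverse to the semiflow and the closed orbits in $N$ generate the asymptotic cycles of invariant measures supported in $N$, a positive class is represented by a closed $1$-form $\omega$ on $N$ that is positive on the flow direction. This would be the analogue, for the manifold with transverse boundary, of the results of \Cref{sec:nonneg-class}.
\item Modify $\omega$ near $\partial N$ so that it becomes $d(-\log t)$-like in a collar, where $t$ is the distance to the boundary. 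Its kernel then defines a foliation whose leaves spiral onto $\partial_\pm N$, compatibly with the direction in which the flow enters and exits.
\item Glue back along $S$ to obtain a depth one foliation $\mc L$ with $\mc L^0=S$ that is transverse to $\phi^\sharp$, and hence almost transverse to $\phi$.
\end{enumerate}
For a merely nonnegative class, I would perturb within the relevant face of the cone. Instead of a transverse foliation, I would produce a transverse surface in $N$ (as in a Fried/Mosher--Landry-type construction via a veering or branched surface carrying $\eta$) that misses the orbits where $\eta$ vanishes. Spiraling that surface onto $\partial N$ gives a depth one lamination whose complementary regions contain those zero orbits.

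\emph{Main obstacle.} The hardest step is the nonnegative (lamination) case of sufficiency. There one must construct a transverse surface dual to $\eta$ that is disjoint from the closed orbits on which $\eta$ vanishes, and this uses nonnegativity in an essential way. The key input would be a structure theorem of the following kind: the orbits with $\eta(\gamma)=0$ form a compact flow-invariant set $\Lambda\subset \mathrm{int}(N)$, and $\eta$ is positive on the semiflow restricted to $N\setminus \Lambda$. I would try to obtain this via a Markov partition or branched surface for $\phi^\sharp$ cut along $S$, building the surface cell by cell from a cocycle representing $\eta$. The hope is that the only closed orbits of the resulting cocycle are those in $\Lambda$.
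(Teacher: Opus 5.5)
Your necessity argument is fine. It is the easy direction the paper leaves implicit: intersection with a depth one leaf gives a class that is $\ge 0$ on closed orbits in $N$, and $>0$ in the foliation case because $\mathrm{int}(N)$ is then fibered by depth one leaves.

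The sufficiency direction, however, has a genuine gap: all of its content sits in steps you either assert or only hope for.
\begin{itemize}
\item Your Step 1 is a relative Fried--Schwartzman theorem for the semiflow on $N$, and it is not available off the shelf. Positivity on each closed orbit does not by itself give positivity on all homology directions of invariant measures supported in $N$. You would need that these directions lie in the closed cone spanned by closed orbits, and that this cone is generated by finitely many of them so that positivity becomes uniform. In other words, you need a finite combinatorial model of the dynamics in $N$. Moreover, $\phi^\sharp$ is not smooth along singular orbits and blowup complexes. This is exactly the Cantwell--Conlon route the paper discusses and deliberately avoids.
\item The nonnegative case is worse off. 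If a positive class existed you could simply use it, since a foliation is a lamination. So the lamination half only has content when there is nothing positive to perturb to.
\item Your structure theorem about $\Lambda$ is neither proved nor needed. A surface transverse to the flow automatically misses every closed orbit on which its class vanishes, because all intersections are positive. The whole task is to produce \emph{some} transverse surface dual to $\eta$, and for that you offer only ``the hope''.
\end{itemize}

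The missing idea is the finite model the paper extracts from the veering triangulation $\tau$, with $S$ relatively carried in efficient position.
\begin{itemize}
\item The dual graph $\Gamma\cut S$ is finite, and its directed cycles are homotopic \emph{in $N$} to closed orbits in $N$, and conversely (\Cref{lem:pullback}). Branch cycles go to prong curves, which efficient position lets you push onto orbits of $\kappa$ without crossing $S$.
\item Hence $i^*\eta$ is nonnegative on directed cycles, and \cite[Lemma 5.10]{LMT20} represents it by a nonnegative cocycle.
\item That cocycle is a weight system on the core branched surface $B\subset \mr M\cut\mr S$, whose sectors are the faces of $\tau$ not traversed by $\mr S$. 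It satisfies the branch equations because a small loop around a branch edge misses $S$.
\item It therefore yields a carried, hence automatically transverse, surface $\mr\Sigma$. This surface represents $\eta$ on $\mr M_{\mr S}$ by $H_1$-surjectivity of $\Gamma\cut S$ together with Thurston's lemma (\Cref{lem:surface-represents-class}).
\item You must then extend $\mr\Sigma$ across $U\cut S$: blow up inside the disjoint tubes, use transverse meridian disks in the meridian tubes, and further blow up via coherent arc systems in the ladderpole tubes. Then cap the leftover vertical boundary by bands over an arc system in $W_I^+$, and correct the class by flow-parallel annuli. Only then do you spin.
\end{itemize}
The foliation case then needs no $1$-forms. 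With the cocycle positive on every edge, every orbit of the semiflow meets the spun leaf: closed orbits by positivity, and non-closed ones because they run along bi-infinite directed paths in $\Gamma\cut S$. This gives a first return map.
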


\Cref{thm:depth-one} will follow from a relative version of the transverse surface theorem. The classical transverse surface theorems, established by Mosher \cite{Mos92,mosher1992dynamical}, Landry \cite{Landry_norm}, and Landry--Minsky--Taylor \cite{landry2025transverse}, connects transversality of closed surfaces in 3-manifolds, possibly with toral boundary, with cohomological conditions. We extend these results to 3-manifolds with boundary and to properly embedded compact surfaces with respect to a `cut' pseudo-Anosov flow on the manifold $M \cut S$. Recall that a properly embedded surface is \emph{essential} if it is both incompressible and boundary incompressible. A properly embedded surface $\Sigma$ in $M \cut S$ is transverse to a flow $\phi$ in $M$ if $\Sigma$ is transverse to $\phi$ in its interior, and there is a way to smooth $\partial\Sigma$ on $S$ so that the branched surface $S\cup\Sigma$ is transverse.

\begin{theorem}\label{thm:transverse-surface}
    Let $M$ be a closed atoroidal 3-manifold with a pseudo-Anosov flow $\phi$. Let $S$ be a closed surface in $M$ that is almost transverse to $\phi$. For any nonnegative class $\eta\in H^1(M\cut S)$, there exists a properly embedded essential surface $\Sigma\subseteq M\cut S$ representing $\eta$ and a dynamic blowup of $\phi$ that is transverse to both $S$ and $\Sigma$.
\end{theorem}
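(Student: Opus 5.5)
The plan is to reduce to the classical (closed/toral) transverse surface theorems by working with a veering triangulation, or equivalently a flow-adapted branched surface, adapted to both $\phi$ and $S$. I would run an Agol--Guéritaud / Landry--Minsky--Taylor type argument on the cut manifold $M\cut S$.

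\emph{Step 1: setup and cone duality.} Fix a dynamic blowup $\phi^\sharp$ transverse to $S$ and cut $M$ along $S$ to get $N = M\cut S$. The flow restricts to a semiflow on $N$ that enters along $S_-$ and exits along $S_+$. Let $\mathcal{C}\subset H_1(N)$ be the closed cone generated by the homology classes of closed orbits of $\phi^\sharp$ in $N$, together with limits of long almost-closed orbit segments (the relative ``homology directions''). A class $\eta$ is nonnegative when $\eta \ge 0$ on $\mathcal{C}$. The first aim is a Fried/Schwartzman-type statement. Every point of $\mathcal{C}$ should be approximated by closed orbits; this uses a specification/closing lemma for the pseudo-Anosov flow together with the fact that orbits which never exit $N$ accumulate on the maximal invariant set in $N$. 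From this, any nonnegative $\eta$ gives a closed $1$-form on $N$ that is nonnegative on the flow direction. It will be strictly positive off a neighborhood of the $\eta$-null part of the invariant set.

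\emph{Step 2: a transverse branched surface carrying $\eta$.} Following Mosher and Landry, I would build a branched surface $B\subset N$, dual to a Markov partition or flow box decomposition of the invariant set, which is transverse to the (further blown-up) flow. I need $B$ to be allowed to meet $\partial N = S_\pm$ in train tracks that smooth into $S$. Its weight space should be identified with $H^1(N)$ (or $H^1(N,\partial N)$ as appropriate). A class $\eta$ is carried with nonnegative weights exactly when $\eta$ is nonnegative on the cycles of the dual graph. By Step 1, those cycles correspond to closed orbits in $N$. Flow segments that run from $S_-$ to $S_+$ impose no constraint, since they are not cycles in $N$. For $\eta$ nonnegative, I would solve for nonnegative weights; zero weights occur only on sectors near the $\eta$-null orbits. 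Those orbits are then blown up, dynamically, into annuli along which the surface is almost transverse. This is exactly the mechanism in Mosher's almost transverse surface theorem.

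\emph{Step 3: from weights to an essential surface.} Nonnegative integral weights give an embedded surface $\Sigma$ carried by $B$. It is transverse to the blown-up flow, and its boundary lies on $S$ and is compatible with the branching, so $S\cup\Sigma$ is transverse. I then argue that $\Sigma$ is essential. A surface transverse to a flow without nullhomotopic closed transversals is incompressible: a compressing disk would yield, via the orbit space, a closed orbit bounding a disk, or a Reeb-type contradiction. For boundary incompressibility, a boundary compressing disk with arc on $S$ would produce a flow segment from $S$ back to $S$ in the same direction within a disk, contradicting transversality. Any remaining non-essential pieces, such as spheres, disks, or $\partial$-parallel components, can be discarded or cut-and-pasted without leaving the carried cone.

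The main obstacle is Step 2 at the boundary. I have to arrange that the branched surface meets $S$ in a way compatible with a single dynamic blowup that is transverse to both $S$ and $\Sigma$ simultaneously. The blowup needed for $S$ and the one needed for the null orbits of $\eta$ may interact, since a null orbit could lie on a singular orbit already blown up for $S$. First I would choose the blowup for $S$ to be minimal. I would then show that further blowing up along $\eta$-null orbits preserves transversality to $S$, using the fact that dynamic blowups can be performed away from $S$ after an isotopy of $S$ along the flow.
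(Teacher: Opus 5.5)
\paragraph{Verdict.} Your skeleton matches the paper's: a branched surface in $N$ transverse to the flow, a nonnegative cocycle on its dual graph, a carried surface, blowups, then essentiality. But your Step 2, where all the content lies, is asserted rather than proved, and two of its claims fail as stated.

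\paragraph{First gap: dual cycles versus closed orbits in $N$.} To apply the graph lemma you need every directed cycle of the dual graph to be homotopic \emph{inside} $N$ to a closed orbit in $N$. Your Step 1 does not give this; it is a Fried/Schwartzman cone statement, itself unproved for a semiflow with an exit set. Nor have you supplied the branched surface itself. A Markov-partition branched surface for the invariant set of the semiflow, compatible with $S$ near the blowup complexes that $S$ crosses, is a nontrivial construction, and it does not automatically come with this correspondence.

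The paper gets both the branched surface and the correspondence from the veering triangulation. With $S$ in efficient relatively carried position, $B$ consists of the sectors of $\tau^{(2)}$ \emph{not} traversed by $\mr S$. Non-branch dual cycles are homotopic to orbits through curves transverse to $\tau$, which avoid $S$ because $S$ is carried. Branch cycles, however, are homotopic only to prong curves on $\partial U$. It is efficient position that lets one slide these prong curves to orbits of $\kappa$ without crossing $S$ (Proposition~\ref{prop:Phi}, Corollary~\ref{lem:pullback}). Without this, nonnegativity on closed orbits does not transfer to the dual graph, and you get no nonnegative weights.

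\paragraph{Second gap: the weight space and the extension to all of $N$.} You claim the weight space is $H^1(N)$ and that carried surfaces have boundary only on $S$. Neither holds. A branched surface adapted to the recurrent part yields a surface with boundary also on the tubes $\partial U$. It represents $\eta$ only on a proper submanifold: in the paper this is $\mr M_{\mr S}$, which omits the tubes and the $I$-bundle $W_I$ over $Y$ where $S$ runs parallel to itself (Proposition~\ref{lem:surface-represents-class}). The restriction of $\eta$ to that submanifold does not in general determine $\eta$.

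Extending the surface and correcting its class is the bulk of Section~\ref{sec:fill}:
\begin{itemize}
\item In meridian tubes, the boundary is capped off.
\item In each component $C$ of $U_\omega\cut S$ for a ladderpole tube, there is a dichotomy governed by $\eta(\mathrm{core}\,C)$.
\begin{itemize}
\item The ladderpole case needs a further blowup dual to a coherent arc system $A_S\cup A'\cup A''$ containing the arcs of $S$, constructed equivariantly when the return map rotates prongs. This is the concrete resolution of your ``main obstacle''.
\item The arc case needs meridian disks of $C$, together with a divisibility that comes from the homological identity.
\end{itemize}
\item The remaining vertical arcs are capped by tilted bands over an arc system in $Y^+$.
\item The class is fixed by oriented sums with the flow-parallel annuli $\partial Y^+\times I$.
\end{itemize}
Your remark that segments from $S_-$ to $S_+$ impose no constraint is right for nonnegativity, but it does nothing to build $\Sigma$ in that region. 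Also, blowups are not triggered by zero weights: a surface carried by a sub-branched surface of $B$ is already transverse. They arise only at tubes met in ladderpole curves.

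\paragraph{Boundary incompressibility.} The argument in Step 3 is invalid. It would equally forbid a transverse annulus parallel into $S_+$, which exists and is $\partial$-compressible. A transverse $\Sigma$ need not be $\partial$-incompressible, and a $\partial$-compressible component may carry part of the class, so it cannot simply be discarded. The paper instead uses the fact that $\partial$-compressions can be performed while preserving transversality \cite[Lemma 3.7]{HT2026}.
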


We say a depth one lamination or foliation \emph{represents} a class $\eta\in H^1(M\cut S)$ if for any loop $\gamma$ in $M \cut S$, the value $\eta(\gamma)$ is
the algebraic intersection number of $\gamma$ with a depth one leaf of the lamination. 
By spinning $\Sigma$ in the statement of \Cref{thm:transverse-surface}, we obtain the following characterization of classes represented by transverse laminations or foliations, implying \Cref{thm:depth-one}.

\begin{theorem}\label{thm:class}
Let $\varphi$ be a pseudo-Anosov flow on a closed atoroidal $3$-manifold $M$ and let $S$ be a closed almost transverse surface. Then a class $\eta$ in $H^1(M\cut S)$ is represented by a depth one almost transverse lamination (resp. foliation) $\mc L$ with $\mc L^0 = \partial(M \cut S)$ if and only if $\eta$ is nonnegative (resp. positive).
\end{theorem}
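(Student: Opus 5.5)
We prove the two directions separately. The reverse direction comes from \Cref{thm:transverse-surface} followed by spinning. The forward direction uses the fact that a transverse leaf meets every closed orbit positively.

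\emph{Only if.} Suppose $\mc L$ is a depth one lamination (resp. foliation), almost transverse to $\phi$, with $\mc L^0=\partial(M\cut S)$ and representing $\eta$. Fix a dynamic blowup $\phi^\sharp$ transverse to $\mc L$. Let $\gamma$ be an oriented closed orbit of $\phi^\sharp$ contained in $M\cut S$.
\begin{itemize}
\item Every intersection of $\gamma$ with a depth one leaf $L$ is positive, by transversality, so $\eta(\gamma)\ge 0$.
\item In the foliation case, $\gamma$ lies in the interior of $M\cut S$, which is foliated by the depth one leaves. Hence $\gamma$ must cross some leaf of the interior foliation. Depth one leaves in a component of $M\cut S$ are all isotopic along the transverse flow, which is the structure of a depth one foliation fibering the interior over $S^1$. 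So $\gamma$ meets a fixed leaf $L$ at least once, giving $\eta(\gamma)>0$.
\end{itemize}
One point needs care: that $\eta$ evaluated via intersection with $L$ agrees with the cohomological pairing, since $L$ is noncompact. Here we use that $L$ is a proper leaf in the interior of $M\cut S$, so it is closed in $\mathrm{int}(M\cut S)$ and is Poincaré–Lefschetz dual to $\eta$ there. Finally, nonnegativity is independent of the blowup by \Cref{sec:nonneg-class}.

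\emph{If.} Let $\eta$ be nonnegative. By \Cref{thm:transverse-surface} there is a properly embedded essential surface $\Sigma\subseteq M\cut S$ representing $\eta$ and a blowup $\phi^\sharp$ transverse to both $S$ and $\Sigma$, with $S\cup\Sigma$ a transverse branched surface. We spin $\Sigma$ around $S$ in the direction prescribed by the smoothing of $\partial\Sigma$ along $S$, as in the spinning construction of \cite[Section 2.3]{landry2023endperiodic}:
\begin{itemize}
\item Take the cut-and-paste sums $\Sigma_n$ of $\Sigma$ with $n$ parallel copies of the relevant components of $S$ near $\partial\Sigma$. Since the branched surface $S\cup\Sigma$ is transverse, each $\Sigma_n$ is transverse to $\phi^\sharp$.
\item Let $L$ be the limit surface: a properly embedded noncompact surface whose ends accumulate exactly on $S$. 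Then $L\cup S$ is a depth one lamination with $\mc L^0 = S=\partial(M\cut S)$, transverse to $\phi^\sharp$, with $L$ representing $\eta$.
\end{itemize}
If some component of $M\cut S$ carries no part of $\Sigma$, then $\eta$ vanishes there. In that case we must check that the definition still permits a lamination; the main theorem's use handles this case by taking components meeting $\Sigma$.

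If $\eta$ is positive, we upgrade the lamination to a foliation. Essentiality and positivity should imply that $\Sigma$ meets every closed orbit of $\phi^\sharp$ in $M\cut S$. We then want the complement of $\Sigma$ in $M\cut S$ to be a product (in the spun sense), so that the leaves $\Sigma_n$ fill the interior after taking parallel copies.

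\emph{Main obstacle.} This last point is where the difficulty lies: showing that positivity makes $\Sigma$ a "relative cross section." That is, every flow line of $\phi^\sharp$ in the interior of $M\cut S$ either hits $\Sigma$ or is asymptotic to $S$. I would first prove this by contradiction. An orbit avoiding $\Sigma$ forever in both time directions, and staying away from $S$, would have a recurrent accumulation set in the compact region $(M\cut S)\ssm\Sigma$. Closing-lemma or shadowing arguments for pseudo-Anosov flows would then produce closed orbits with $\eta\le 0$, contradicting positivity. Once $\Sigma$ is known to be a relative cross section, the first-return structure identifies $M\cut S$ minus $S$ with a mapping torus of the interior of $\Sigma$. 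Spinning the resulting fibration around $S$ gives the depth one foliation.
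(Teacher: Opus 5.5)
Your proposal is correct in outline. The lamination half of the ``if'' direction matches the paper: apply \Cref{thm:properly-embedded-tst}, then spin. Your ``only if'' argument is the standard one, which the paper leaves implicit.

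The real divergence is the foliation upgrade. The paper uses no closing lemma. It returns to the construction of \Cref{sec:nonneg-class}. There, positivity of $\eta$ on directed cycles of the \emph{finite} graph $\Gamma\cut S$ lets one choose, by \cite[Lemma 5.10]{LMT20}, a cocycle positive on every edge. So a surface $\mr\Sigma$ representing a multiple of $\eta$ is fully carried by the core branched surface $B$. An orbit staying in $N=M\cut S$ crosses faces of $\tau^{(2)}$ not traversed by $S$, and each such crossing is a crossing of $\mr\Sigma$. That argument is purely combinatorial, but it is tied to that particular carried surface.

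Your argument is dynamical and applies to \emph{any} transverse surface dual to a positive class. It is therefore insensitive to Thurston's lemma, Steps 1--4 and the boundary compressions. The price is a closing lemma for the blown-up flow, which you should state precisely:
\begin{itemize}
\item Run it on the $\omega$-limit set $\Lambda$ of a forward ray, not only on bi-infinite orbits. Forward rays from the spun leaf are what the first return map needs.
\item Transversality of the branched surface $S\cup\Sigma$ shows that $\Lambda$ is a compact invariant set at positive distance from $S\cup\Sigma$.
\item If $\Lambda$ meets a singular orbit or a blowup complex, it already contains a closed orbit missing $S\cup\Sigma$.
\item Otherwise the blowdown is a conjugacy near $\Lambda$. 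The pseudo-Anosov closing lemma then gives a closed orbit $\beta$ of $\phi^\sharp$ in $\mathrm{int}(N)$ disjoint from $\Sigma$, so $\eta(\beta)=0$, contradicting positivity.
\end{itemize}

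Two smaller corrections. First, orbits meet $S$ in finite time, so ``asymptotic to $S$'' should read ``reaches $\partial N$''. Second, the order of operations in your last step is off. There is no first return map on $\mathrm{int}(\Sigma)$: points near $\partial\Sigma\cap\partial_+N$ leave $N$ without returning. You must spin first and show that every forward and backward ray from the spun leaf $L$ returns to $L$. Rays reaching $\partial N$ do so in the spiraling collar, and the others by the argument above. Only then is $\mathrm{int}(N)$ the mapping torus of $L$.
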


\begin{remark}[Connections to the literature]
Using the Schwartzman--Sullivan theory of foliation currents, Cantwell and Conlon prove that if $X$ is a smooth oriented $1$-dimensional foliation of $N = M \cut S$ and $\eta \in H^1(N)$ is strictly positive on the asymptotic cycles of $X$, which are a certain generalization of closed orbits, then $\eta$ can be represented by a smooth foliated form $\omega$ that is transverse to $X$ on $\rm{int}(N)$ \cite[Theorem 4.9]{CC2013}. 
(Technically, their Section 4.3 starts by assuming that $X$ is already transverse to \emph{some} foliation $\mc F$ without holonomy, but this assumption is not used in the proof.)
When $\eta$ is an integral class, the foliation tangent to the kernel of $\omega$ is depth one and transverse to $X$.

This result gives another potential path towards proving \Cref{thm:depth-one} in the special case where the form $\eta$ is positive. However, this approach has its own technical difficulties; for example, in the setting of \Cref{thm:depth-one} the $1$-dimensional foliation of $N$ induced by the flow $\varphi$ is not smooth since $\varphi$ is itself not smooth on $M$. Our approach is combinatorial, using the veering triangulation associated to $\varphi$, and so low regularity is not an issue. Moreover, the strictly positive case is an easy consequence of the more general statement about nonnegative classes and depth one laminations, which itself does not follow from the Cantwell--Conlon approach.
\end{remark}

\subsection*{Applications}
\subsubsection*{Core complexity}
\Cref{thm:transverse-surface} gives a homological condition for the manifold $M\cut S$ to have finite \emph{core complexity} $\mf c(M \cut S)$, defined in \cite{HT2026}.  In more detail, when $S$ is a closed surface almost transverse to a pseudo-Anosov flow $\varphi$, the core complexity is defined to be

\begin{align*}
\mathfrak{c}(M\cut S) = 
\begin{cases}
0, & M\cut S \text{ contains a product annulus},\\
\min{-\chi(\Sigma)}, & \text{otherwise},
\end{cases}
\end{align*}
where a product annulus is an essential annulus joining distinct boundary components of $M\cut S$. In the second case, the minimum is over all properly embedded surfaces $\Sigma$ in $M\cut S$ that are transverse to (a dynamic blowup of) $\varphi$ and which also meet both boundary components of $M \cut S$.

The main theorems in \cite{HT2026} control the hyperbolic geometry of $M$ in terms of curve graph data in the surface $S$ associated to the flow $\varphi$. These theorems however apply only when $\mathfrak{c}(M \cut S)$ is finite. The following corollary of \Cref{thm:transverse-surface} states that this finiteness is really a cohomological condition.

\begin{corollary}
If there is a nonnegative class in $H^1(M\cut S)$ that is nontrivial when restricted to each component of $\partial(M\cut S)$, then $\mf c(M \cut S) < \infty$.
\end{corollary}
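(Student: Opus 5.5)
The plan is to apply \Cref{thm:transverse-surface} to the given nonnegative class $\eta \in H^1(M\cut S)$, which by hypothesis is nonzero on every component of $\partial(M\cut S)$. This produces a properly embedded essential surface $\Sigma \subseteq M\cut S$ representing $\eta$, together with a dynamic blowup of $\phi$ transverse to both $S$ and $\Sigma$. If $M\cut S$ contains a product annulus, then $\mf c(M\cut S)=0$ by definition and there is nothing to prove. So I will assume there is no product annulus, and then it suffices to exhibit a single surface that is transverse to a blowup of $\varphi$ and meets both boundary components of $M\cut S$. Such a surface has finite $-\chi$, so the minimum defining $\mf c$ is finite.

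The key step is to check that $\Sigma$ meets every component of $\partial(M\cut S)$. Let $F$ be a boundary component, and suppose $\partial\Sigma \cap F = \emptyset$. Then for every loop $\gamma$ in $F$, the loop can be pushed slightly into the interior of $M\cut S$, where it is disjoint from $\Sigma$. Since $\Sigma$ is properly embedded and represents $\eta$, the value $\eta(\gamma)$ is the algebraic intersection of $\gamma$ with $\Sigma$, so $\eta(\gamma)=0$. This contradicts the assumption that $\eta|_F \neq 0$. Hence $\partial \Sigma$ meets each component of $\partial(M\cut S)$.

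There are two caveats. First, $\Sigma$ may be disconnected, and different components of $\Sigma$ may meet different boundary components. The definition of $\mf c$ reads ``surfaces,'' so I will allow $\Sigma$ to be disconnected. If connectedness were required, I would instead do a cut-and-paste (double-curve sum) with a component meeting the other side, keeping transversality, but I do not expect this to be needed. Second, components of $\Sigma$ that are spheres, disks, or tori could make $-\chi(\Sigma)$ negative or zero; essentiality rules out spheres and disks, which keeps $-\chi(\Sigma)$ finite and nonnegative. Either way $\mf c(M\cut S) \le -\chi(\Sigma) < \infty$.

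I expect the main obstacle to be matching the transversality notion used in the definition of $\mf c$ (from \cite{HT2026}) with the one delivered by \Cref{thm:transverse-surface}, namely transversality to one common dynamic blowup together with the smoothing condition along $\partial\Sigma$. I expect this to hold by definition.
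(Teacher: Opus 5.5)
Your proposal is correct and is the argument the paper has in mind. The paper gives no separate proof and treats the corollary as immediate from \Cref{thm:transverse-surface}: the surface $\Sigma$ representing $\eta$ must meet every component $F$ of $\partial(M\cut S)$, since otherwise loops in $F$ could be pushed off $\Sigma$, forcing $\eta|_F=0$. Hence $\Sigma$ is a competitor in the definition of $\mf c(M\cut S)$. That definition, as stated here, does not require connectedness, so your fallback cut-and-paste is unnecessary; as phrased it would not work anyway, because disjoint components have no double curves to sum along.
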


\subsubsection*{Foliation cones}
Each depth one foliation of $M$ that contains $S$ as its collection of depth zero (i.e. closed) leaves determines a depth one foliation of $N  = M \cut S$  whose (cooriented) closed leaves are exactly $\partial_\pm N$. Here, $\partial_\pm N$ are the components of $\partial N$ that are cooriented out of or in to $N$, respectively. 

In general, $N$ admits many such depth one foliations and these are organized by the \emph{foliation cones} of $H^1(N)$:
A class in $H^1(N)$ is said to be \emph{foliated} if it is dual to a fibration $N \ssm \partial N \to S^1$ whose foliation by fibers extends to $N$ by attaching $\partial_\pm N$.
Cantwell and Conlon prove that the foliated classes in $H^1(N)$ are precisely
 the integer points of
a union of finitely many open rational polyhedral cones, each of which is called a \emph{foliation cone} of $N$ \cite{cantwell1999foliation, CCF19, landry2023endperiodic}. 

The next result, which follows from the proof of \Cref{thm:class}, states that any pseudo-Anosov flow on $M$ that is almost transverse to $S$ `sees' an entire foliation cone of $M \cut S$. The depth one foliations of this cone are exactly the ones represented by positive classes of $H^1(M\cut S)$ and these foliations can each be realized transverse to the induced semiflow on $M \cut S$.

\begin{theorem} \label{th:intro_cone}
Let $\varphi$ be a pseudo-Anosov flow on a closed atoroidal $3$-manifold $M$ and let $S$ be a closed almost transverse surface. The cone in $H^1(M\cut S)$
consisting of positive classes, when nonempty, is a foliation cone of $M\cut S$.
\end{theorem}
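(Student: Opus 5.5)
Let $N = M\cut S$ and let $\mathcal C_+\subseteq H^1(N;\R)$ be the cone of positive classes, assumed nonempty. Cantwell--Conlon show that the foliated integral classes are exactly the integer points of finitely many disjoint open rational polyhedral cones. So I will prove two inclusions. First, every integral point of $\mathcal C_+$ is foliated. Second, $\mathcal C_+$ is a full component of the set of foliated classes: if $\mathcal F$ is the foliation cone containing some positive class, then $\mathcal F\subseteq \mathcal C_+$. Since $\mathcal C_+$ is open and convex (it is cut out by strict inequalities), together these give $\mathcal C_+=\mathcal F$.

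\emph{Step 1: positive classes are foliated.} Let $\eta\in\mathcal C_+$ be integral. By \Cref{thm:class}, $\eta$ is represented by an almost transverse depth one foliation $\mc L$ of $N$ with $\mc L^0=\partial N$. Its depth one leaves are fibers of a fibration $\mathrm{int}(N)\to S^1$ dual to $\eta$, so $\eta$ is foliated. The care needed is to check that the spun foliation from the proof of \Cref{thm:class} really closes up along $\partial_\pm N$ with the correct coorientations. This should follow because $S\cup\Sigma$ is a transverse branched surface, so the flow direction cooriented the boundary correctly.

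\emph{Step 2: the foliation cone containing a positive class consists of positive classes.} Fix $\eta_0\in\mathcal C_+\cap\mathcal F$ and suppose some $\eta_1\in\mathcal F$ is not positive. Then some closed orbit $\gamma\subseteq N$ of $\phi^\sharp$ has $\eta_1(\gamma)\le 0$. Since $\mathcal F$ is open and convex, moving along the segment from $\eta_0$ to $\eta_1$ and slightly past any zero, I get a foliated integral class $\eta$ with $\eta(\gamma)<0$ or $=0$. Now I need to exclude this. Cantwell--Conlon describe foliation cones as dual to a finite set of ``junctures''/homology directions of the endperiodic monodromy, namely closed orbits of a Handel--Miller-type flow. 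Positivity on all closed orbits of $\phi^\sharp$ should correspond to positivity on these. Concretely, I would aim to show that the closed orbits of $\phi^\sharp$ in $N$ generate, as a cone, exactly the dual cone cutting out $\mathcal F$. For the direction needed: a depth one foliation $\mc L_0$ in the class $\eta_0$, transverse to $\phi^\sharp$, makes $\phi^\sharp|_{\mathrm{int}N}$ a suspension of an endperiodic map $f_0$ on a depth one leaf, up to the standard blowup. By Landry--Minsky--Taylor's endperiodic theory, the foliation cone containing $\eta_0$ is the cone of classes positive on the closed orbits of the suspension flow of $f_0$, i.e. the Handel--Miller/spun flow. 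Its closed orbits are, up to isotopy, the closed orbits of $\phi^\sharp$ in $N$, because the flow is determined by the endperiodic monodromy. That gives $\mathcal F=\mathcal C_+$.

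The main obstacle is this identification in Step 2: matching the closed orbits of the restricted pseudo-Anosov flow with the periodic orbits governing the foliation cone, as in the Cantwell--Conlon/Landry--Minsky--Taylor description. I would first try to invoke the characterization in \cite{landry2023endperiodic}: the foliation cone of an endperiodic monodromy is dual to the cone spanned by its periodic orbit classes, and these classes correspond to the closed orbits of any flow transverse to the depth one foliation. If that fails, I would argue directly. Suppose a foliated class $\eta$ is nonpositive on some orbit $\gamma$. Then a transverse depth one leaf in that class, made transverse through $\eta_0$'s cone by convexity of transverse forms on the interior, would force $\gamma$ to cross leaves positively, which is a contradiction.
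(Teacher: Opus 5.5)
\paragraph{Verdict.} Your proposal has a genuine gap, located exactly at the step you flag in Step~2.

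\paragraph{What works.} Step~1 is fine. You are in fact more explicit than the paper about the inclusion $\mathcal C_+\subseteq\mathcal F$. To make ``together these give'' airtight, use disjointness of the foliation cones: a rational point of $\partial\mathcal F\cap\mathcal C_+$ would be foliated, hence would lie in a second open cone meeting $\mathcal F$.

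\paragraph{The gap in Step~2.} You identify ``the suspension flow of $f_0$'' with ``the Handel--Miller/spun flow.'' You then assert that its closed orbits agree with those of $\phi^\sharp$ in $N$ ``because the flow is determined by the endperiodic monodromy.'' That is not true. The monodromy determines the return map only up to isotopy, and the first return map $f_0$ of $\phi^\sharp$ is a different map from the spun pseudo-Anosov representative $f_h$. The results of \cite{landry2023endperiodic} apply to $f_h$, via the flow $\phi_h$ of its Theorem~7.1 and Corollary~7.2. Isotopic maps can have very different periodic points.

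So the statement you actually need is the following: every closed orbit of $\phi^\sharp$ in $N$ is homotopic in $N$ to a closed orbit of the spun flow. Every foliated class in $\mathcal F$ is positive on the spun flow's orbits by transversality, so this would make it positive on the orbits of $\phi^\sharp$. This is a Nielsen-type rigidity statement, and it requires proof.

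\paragraph{How the paper proves it.} The paper uses dynamics at infinity:
\begin{enumerate}
\item Pass to a power of $f_0$ and a lift $\wt f_0$ to $\wt L$ fixing a lift of a periodic point. Then $\wt f_0$ acts on $\partial\wt L$ with multi sink-source dynamics (Proposition~8.2 of \cite{LMT_UC}). This is where the pseudo-Anosov nature of $\phi$ enters.
\item A homotopy from $f_0$ to $f_h$ yields a lift $\wt f_h$ with the same boundary action \cite{CaCo13}.
\item A lift of a spun pseudo-Anosov map with multi sink-source boundary dynamics has a unique fixed point in $\wt L$ (Proposition~4.15 of \cite{landry2023endperiodic}).
\end{enumerate}
That fixed point gives the closed orbit of $\phi_h|_N$ homotopic to the given orbit of $\phi^\sharp$. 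Nothing in your proposal plays the role of this argument.

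\paragraph{The fallback is circular.} Convexity of the set of closed $1$-forms positive on a fixed flow only helps once you know $\eta_1$ is represented by a form, or a depth one foliation, transverse to $\phi^\sharp$. That is what you are trying to show. A priori, a depth one foliation in the class $\eta_1$ is transverse to the spun flow $\phi_h|_N$, not to $\phi^\sharp$. Relating the closed orbits of these two flows is exactly the missing step.
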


We additionally prove that the cone of positive classes is nonempty if and only if there is a nonnegative class that is contained in a foliation cone of $M\cut S$ (see \Cref{prop:nonempt}).

\subsection*{Acknowledgements}
We thank Chi Cheuk Tsang for helpful comments on an earlier draft of this paper. The completion of this paper was supported by the National Science Foundation under Grant No. DMS--2424139, while the authors were in residence at the Simons Laufer Mathematical Sciences Institute in Berkeley, California, during the Spring 2026 semester.
Taylor was also partially supported by NSF grant DMS--2503113 and the Simons Foundation, and Huang was partially supported by NSF grant DMS-2005328.

\section{Background}
Here, we briefly review the background needed for the paper and provide references where further details can be found.

\subsection{(Almost) pseudo-Anosov flows and their orbit spaces}\label{sec:pA}

We refer the readers to the recent monograph by Barthelm\'e and Mann \cite{barthelme2025pseudo} for a detailed treatment of the theory. Roughly speaking, a flow $\phi$ on a closed 3-manifold $M$ is a \emph{pseudo-Anosov} flow if it preserves a pair of transverse 2-dimensional singular foliations $\FF^s$ and $\FF^u$ on $M$, called the stable and the unstable foliations, so that every leaf of $\FF^s$ (resp. $\FF^u$) is a union of flow lines that are forward (resp. backward) asymptotic. The singular locus of each foliation is a finite collection of closed \emph{singular orbits}, 
and dynamics near a singular orbit is locally modeled on a pseudo-hyperbolic orbit with alternating contracting directions (locally determining $\mc F^s$) and repelling directions (locally determining $\mc F^u$).

\smallskip
We also consider \emph{almost pseudo-Anosov flows}, which are a generalization of pseudo-Anosov flows first defined by Mosher \cite{mosher1990correction, mosher1992dynamical}, and are obtained by dynamically blowing up a collection of singular orbits of pseudo-Anosov flows. See also Landry--Minsky--Taylor \cite{landry2025transverse} for an explicit description of the construction.

To blow up a singular orbit of a pseudo-Anosov flow $\phi$, we replace the singular orbit by a \emph{blowup complex}, which is a union of annuli, called \emph{blowup annuli}, glued along their boundaries. Each blowup annulus is a union of flowlines, with two closed orbits at the boundary and with orbits in the interior converging to distinct boundaries in forward and backward time. Moreover, the flow restricted to each blowup annulus has a cross section (so not a Reeb annulus). The blowup complex has a transverse section which is a finite tree. Each interior vertex of the tree has even degree, and the first return map induced by the blown up flow near an interior vertex is locally alternating between contracting and repelling on adjacent edges. See \Cref{fig:blowup} for an example.

\begin{figure}[h!]
    \centering
    \includegraphics[width= .5\textwidth]{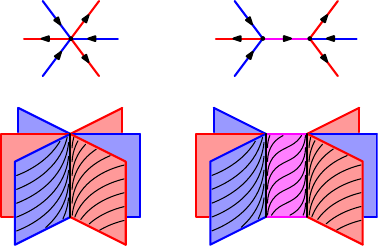}
    \caption{Figure from \cite{landry2023endperiodic} showing a dynamic blowup of a 3-pronged singular orbit and its transverse cross section. There is a single blowup annulus in this example.}\label{fig:blowup}
\end{figure}

An almost pseudo-Anosov flow also has a pair of stable and unstable foliations. The two foliations are transverse except along the blowup annuli, and these annuli are contained in leaves of each foliation. If an almost pseudo-Anosov flow $\phi^\sharp$ is obtained as a blowup of a pseudo-Anosov flow $\phi$, then there is a blowdown map $M \to M$, which is homotopic to the identity, that collapses each blowup complex of $\phi^\sharp$ to a single singular orbit and also semiconjugates the flow $\varphi^\sharp$ with its invariant foliations to $\varphi$ with its invariant foliations.

\medskip
In general, given a flow $\psi$ on a manifold $N$, the \emph{orbit space} (also called the \emph{flow space}) of $\psi$ is the quotient of the universal cover $\wt{N}$ of $N$ by the lifted flowlines.

Let $\phi$ be an almost pseudo-Anosov flow with stable and unstable foliations $\mc F^{s/u}$, and let $\orb$ be the orbit space of $\phi$.
By Fenley and Mosher \cite{fenley2001quasigeodesic}, $\orb$ is homeomorphic to $\R^2$ . Denote the natural projection map by $\Theta\colon \wt M \to \orb$. The stable and unstable foliations $\FF^s$ and $\FF^u$ lift to singular foliations $\wt{\FF}^s$ and $\wt{\FF}^u$ of $\wt{M}$, and they descend to a pair of transverse 1-dimensional singular foliations on $\orb$. We denote them by $\orb^s$ and $\orb^u$ respectively. The $\pi_1(M)$--action on $\wt{M}$ induces a $\pi_1(M)$-action on $\orb$ by homeomorphisms preserving $\orb^s$ and $\orb^u$. Define a \emph{leaf slice} of a leaf $\ell$ in $\orb^s$ or $\orb^u$ to be a properly embedded copy of $\R$ in $\ell$. If $\ell$ is a periodic leaf, i.e. contains a periodic point for the $\pi_1(M)$--action, then a \emph{half-leaf} of $\ell$ is a complementary component of the periodic points in $\ell$. Note that a half-leaf can either be an infinite ray or a \emph{blowup segment}, which is the projection of some lift of a blowup annulus.

A \emph{perfect fit rectangle} in $\orb$ is a proper embedding of a rectangle with a corner missing, i.e. $[0,1]\times[0,1]\ssm\{(1,1)\}$, into $\orb$ so that the vertical/horizontal foliation maps to $\orb^{s/u}$. We say a subset $A$ in $M$ \emph{kills perfect fits} (or that $\varphi$ \emph{has no perfect fits relative to} $A$) if every perfect fit rectangle in $\orb$ contains a point in the shadow $\Theta(\wt A)$ of the preimage $\wt A$ of $A$ in $\wt{M}$. For any transitive pseudo-Anosov flow $\phi$ on a closed $3$-manifold $M$, Tsang proves that there is a finite collection of closed orbits of $\phi$ that kills its perfect fits \cite[Proposition 2.7]{tsang2022constructing}. This fact also follows from earlier work of Fried \cite{fried1983transitive} and Brunella \cite{brunella1995surfaces} which proves the existence of Birkhoff sections for such flows.

\smallskip

\subsection{Veering triangulations}\label{sec:veering}

Our main tool to construct transverse laminations is veering triangulations. These are combinatorial objects that encode the dynamics of (almost) pseudo-Anosov flows. We sketch below the construction due to Agol-Gu\'eritaud, and we also refer the readers to \cite{LMT21} for details. 

First suppose $\phi$ is a pseudo-Anosov flow. The case for an almost pseudo-Anosov flows is very similar, and is discussed in \Cref{sec:apA}. Let $\kappa$ be the union of all singular orbits and a finite collection of regular closed orbits so that $\kappa$ kills perfect fits, and set $M_0 = M \ssm \kappa$.
If we denote the universal cover of $M_0$ by $\wt{M_0}$, then the orbit space $\mr\pp$
 for $\phi|_{M_0}$
 is by definition the quotient of $\wt{M_0}$ by the lifted flow lines. The orbit space $\mr\pp$ inherits a pair of transverse (non-singular) 1-dimensional foliations $\mr\pp^{s/u}$.

There is a natural way to define a \emph{completed orbit space} $\pp$ by adding ideal points at infinity (see the discussion in \cite[Section 4]{LMT21}). First note that $\mr\pp$ is naturally identified with the universal cover of $\orb\ssm\wh{\kappa}$, where $\wh{\kappa} = \Theta(\wt \kappa)$ is projection to $\orb$ of the preimage $\wt \kappa$ of $\kappa$ in $\wt M$.
 Then $\pp$ is defined to be the corresponding infinite branched covering of $\orb$ branching over $\wh{\kappa}$, and the ideal (i.e. completion) points are the points of the preimage of $\wh{\kappa}$.  
The completed flow space $\pp$ also inherits a pair of transverse foliations $\pp^{s/u}$ with endpoints of the leaves possibly at the ideal points.

A \emph{rectangle} in $\pp$ is an embedding of $[0,1]\times[0,1]$ into $\pp$ so that on its interior, it sends horizontal lines to leaf segments of $\pp^u$ and vertical lines to leaf segments of $ \pp^s$. A rectangle is called a \emph{maximal rectangle} (also called a \emph{tetrahedra rectangle}) if it contains an ideal point in the interior of each of its sides. The condition that $\kappa$ kills perfect fits implies that every increasing union of rectangles is contained in a maximal rectangle.
Note that a maximal rectangle projects to a rectangle in $\orb$ with a point of $\wh{\kappa}$ in the interior of each side.
\smallskip

A \emph{taut ideal tetrahedron}, defined by Lackenby \cite{lackenby2000taut}, is an ideal tetrahedron with a coorientation for its faces so that it has two faces with the coorientation pointing out, called the \emph{top faces}, and two faces with the coorientation pointing in, called the \emph{bottom faces}. Each edge of a taut ideal tetrahedron is then assigned an angle of $0$ or $\pi$ according to this coorientation. The $\pi$-edge between two top faces is the \emph{top edge}, and the $\pi$-edge between two bottom faces is the \emph{bottom edge}. See \Cref{fig:taut-tetrahedron} for a picture of a taut tetrahedron with the ends truncated (see also \Cref{sec:ladderpole}).

\begin{figure}[h]
\begin{center}
\includegraphics[width = .4 \textwidth]{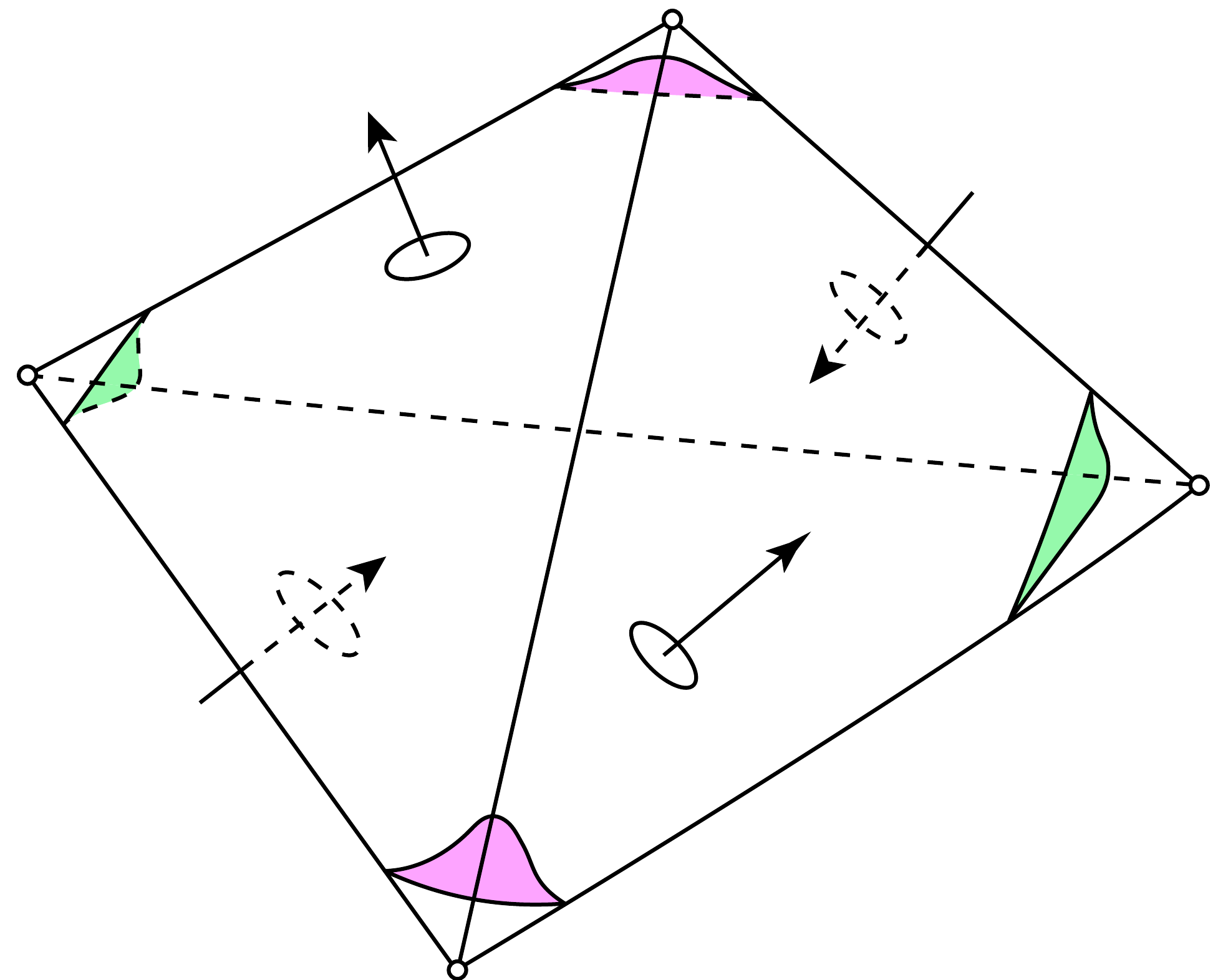}
\caption{An illustration of a taut ideal tetrahedron with coorientations of the faces. Tips of the truncated tetrahedron are shown in color, with two upward tips in purple and two downward tips in green.}
\label{fig:taut-tetrahedron}
\end{center}
\end{figure}

As shown in \Cref{fig:2-tets}, we associate to each maximal rectangle in $\pp$ a taut ideal tetrahedron with the convention that the edge connecting two horizontal sides is the top edge. Two ideal tetrahedra are glued along a face if their maximal rectangles share three ideal points on their boundaries.
This defines a space with an ideal triangulation $\wt{\tau}$. Since the action of $\pi_1(M_0)$ on $\pp$ preserves maximal rectangles, it induces a covering simplicial action on the geometric realization of $\wt{\tau}$, and the quotient space is triangulated and homeomorphic to $M_0$. Therefore, this construction produces an ideal triangulation $\tau$ on $M_0$, called the associated \emph{veering triangulation}.

\begin{figure}[h]
\begin{center}
\includegraphics[width = .4 \textwidth]{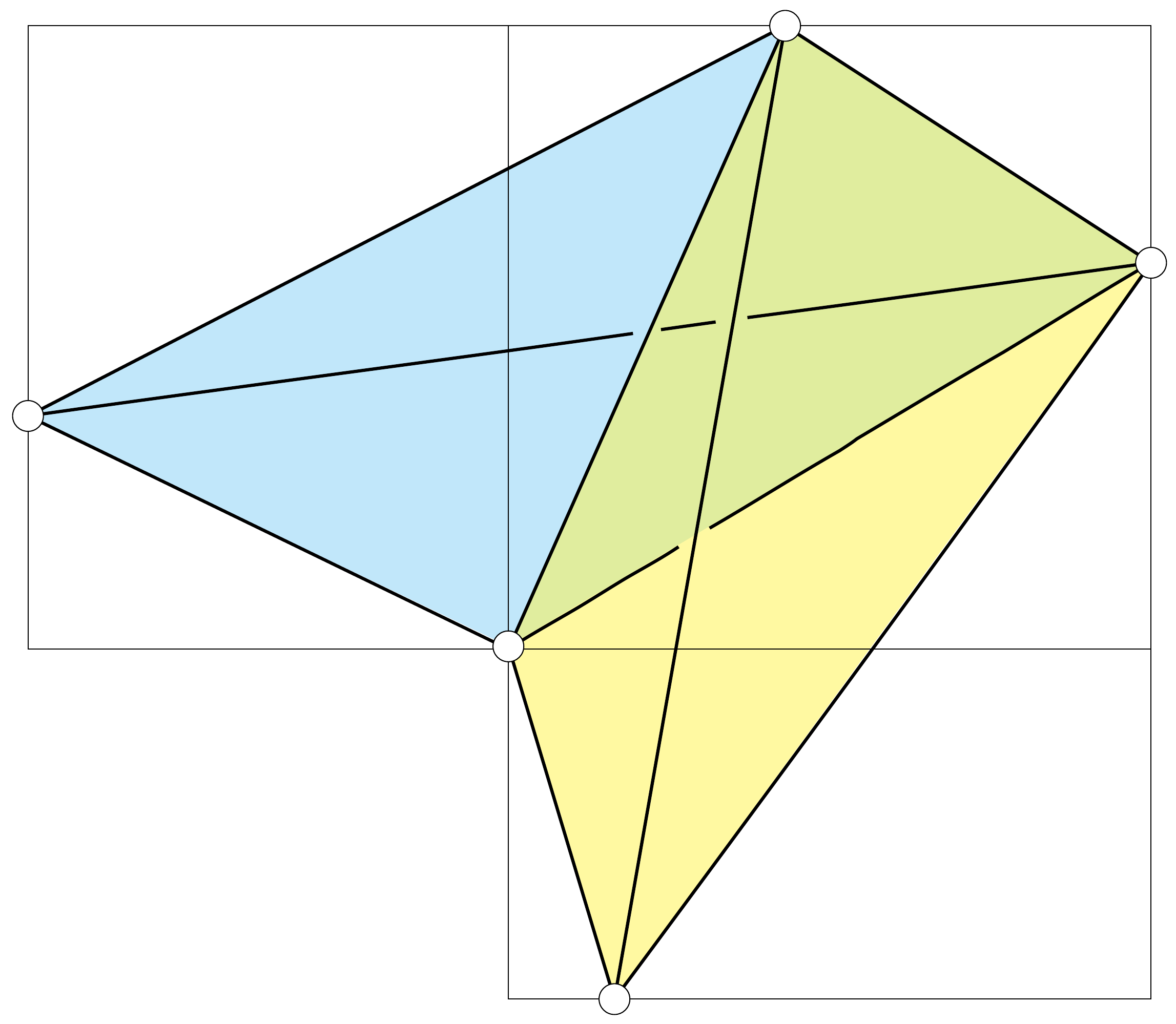}
\caption{Two tetrahedra rectangles sharing three ideal points on their boundaries give rise to a gluing between their tetrahedra along a face.}
\label{fig:2-tets}
\end{center}
\end{figure}

The triangulation $\tau$ is \emph{taut} in the sense of \cite{lackenby2000taut}, which means that around every edge the angles sum up to $2\pi$. Hence, the 2-skeleton $\tau^{(2)}$ is naturally a cooriented branched surface, branching along the edges. See \Cref{fig:branch-edge} for a local picture near an edge. By \cite[Theorem C]{LMT21}, after an isotopy, the oriented branched surface $\tau^{(2)}$ is positively transverse to the orbits of the flow $\phi$.

\begin{figure}[h]
\begin{center}
\includegraphics[width = .4 \textwidth]{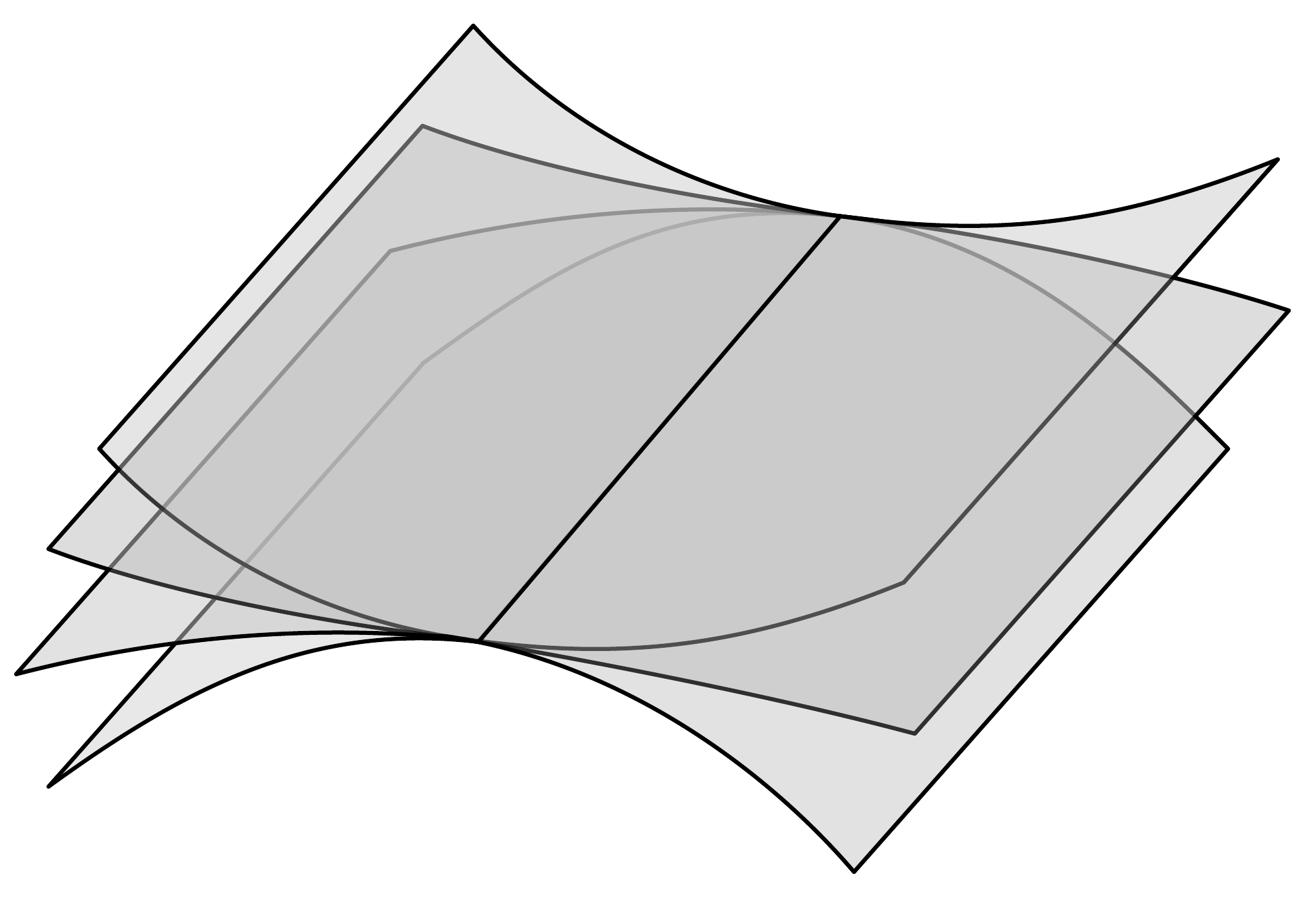}
\caption{The local branching near an edge $b$ of $\tau$. The two $\pi$-angles are along the tetrahedra above and below $e$.}
\label{fig:branch-edge}
\end{center}
\end{figure}

\smallskip
We conclude this subsection by recording a basic property about the action on the completed flow space $\mc P$, which following fairly easily from the fact that $\mc P$ contains no perfect fit rectangles. For a similar statement, which implies the lemma for nonsingular points, see \cite[Lemma 2.13]{tsang2022constructing}.

\begin{lemma}\label{lem:npfs}
Each nontrivial $g \in \pi_1(M)$ fixes at most one point of $\mc P$.
\end{lemma}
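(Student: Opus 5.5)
Note first that $g$ acts on $\mc P$ as a deck-type transformation of $\pi_1(M_0)$; interpreting the statement for $g \in \pi_1(M_0)$ acting on $\mc P$ (or via the branched cover over $\orb$), the plan is to argue by contradiction. Suppose a nontrivial $g$ fixes two distinct points $p \neq q$ of $\mc P$. Then $g$ preserves the leaves of $\mc P^{s}$ and $\mc P^{u}$ through $p$ and through $q$. A fixed point of a nontrivial element is a lift of a periodic orbit, so each fixed point is either an ideal point (a lift of an orbit of $\kappa$) or a nonsingular interior point (a lift of a closed orbit in $M_0$). In both cases the invariant leaf structure is the standard one: $g$, or a power $g^n$, preserves each half-leaf (prong) at $p$ and acts on the stable half-leaves as a contraction toward $p$ and on the unstable half-leaves as an expansion away from $p$, or the reverse. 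Replacing $g$ by $g^n$ costs nothing, since $g^n$ is still nontrivial ($\pi_1(M_0)$ is torsion free) and still fixes $p$ and $q$.

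The key step is to build a perfect fit, or an infinite increasing union of rectangles not contained in a maximal rectangle, from the two fixed points. Since $\mc P$ is simply connected and its foliations are nonsingular in the interior, I would take the quadrant (a region bounded by a stable half-leaf and an unstable half-leaf) at $p$ that contains $q$, or the quadrant through which $q$ is reached. Then I would consider the maximal rectangles having $p$ as a corner-adjacent point in that quadrant, or more concretely the set of points $x$ in that quadrant such that the rectangle spanned by $p$ and $x$ is embedded. Both $g$ and $g^{-1}$ preserve this family. Iterating a rectangle $R$ spanned by $p$ and a point near $q$ under $g^{\pm k}$ stretches it along the unstable direction and squeezes it along the stable direction. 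Because $q$ is fixed, the points of $R$ near $q$ cannot escape, while the side through $p$ grows without bound.

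One then obtains an increasing union of rectangles whose union is an unbounded product region with a half-leaf at $p$ as one side and a leaf through $q$ as another side. Such a region is either a perfect fit rectangle or a region containing no ideal point in its interior while being infinite. Either outcome contradicts the fact, from the construction of $\kappa$, that every increasing union of rectangles lies in a maximal rectangle, and hence in a compact one with ideal points on its sides.

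I expect the main obstacle to be making this geometric step rigorous: showing that the leaves through $p$ and $q$ actually bound a common rectangle-type region, rather than, say, $q$ lying on a leaf through $p$. I would handle the collinear case separately. If $q$ lies on the stable leaf of $p$, then $g$ contracts the segment $[p,q]$ toward $p$ and simultaneously toward $q$, which is impossible unless there is a singular or ideal point between them. That case reduces to the same analysis applied to the nearest ideal point. Alternatively, I would cite \cite[Lemma 2.13]{tsang2022constructing} for the case of nonsingular points and reduce the ideal-point case to it by perturbing to nearby nonsingular periodic points of $g$ in adjacent maximal rectangles.
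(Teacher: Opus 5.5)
There is a genuine gap at the step you flag as the main obstacle, and the mechanism you sketch there does not work.

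A rectangle spanned by $p$ and a point near $q$ need not exist. The point $q$ can lie in the complementary region between two adjacent half-leaves at $p$, but beyond the completion points that cut off the union of rectangles cornered at $p$ in that region. This is the only case with real content. (If $p$ and $q$ do span an embedded rectangle, you are done at once: that rectangle is $g^n$--invariant, so the far endpoint of its side along a half-leaf at $p$ is a second fixed point on that half-leaf, contradicting the contraction/expansion there.)

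Even granting $R$, the iterates $g^{\pm k}R$ are rectangles cornered at $p$ whose unstable side grows while the stable side shrinks. So they are not nested and give no increasing union. Also, points of $R$ near $q$ can escape, since $q$ is itself a saddle-type fixed point. Hence neither a perfect fit nor an unbounded increasing union of rectangles is actually produced. The fallback of reducing the ideal-point case to Tsang's lemma by perturbing to nearby nonsingular periodic points of $g$ is not meaningful either: the fixed set of the given element $g$ is exactly what is in question and cannot be perturbed.

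The missing idea, which is how the paper argues, is to iterate a leaf rather than a rectangle. Choose a regular leaf $l$ that crosses a half-leaf $r$ at $p$ and separates $p$ from $q$, and let $g^k$ fix the half-leaves at $p$. Each $g^{kn}l$ still separates $p$ from $q$, so these leaves cannot exit compact sets of $\mr{\mc P}$. But after possibly replacing $g$ by $g^{-1}$, the points $g^{kn}l\cap r$ run off the end of $r$. Hence the leaves $g^{kn}l$ accumulate on a chain of leaves, one of which makes a perfect fit with $r$. This contradicts the fact that $\mc P$ has no perfect fits, since a perfect fit in $\mc P$ would project to one in $\orb$ whose interior misses $\wh{\kappa}$.

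Separately, your assertion that some power of $g$ fixes every half-leaf at $p$ is false when $p$ is a completion point. $\mc P$ is an infinite branched cover, so a completion point has infinitely many quadrants, and peripheral elements such as a meridian shift them with no power fixing any half-leaf. That case is easy: such a $g$ permutes the half-leaves at $p$ and their complementary regions without fixing any of them, so it fixes no other point. But it has to be split off, as the paper does.
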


\begin{proof}
First, note that $\mc P$ itself has no perfect fit rectangles. This was observed in \cite[Section 5.3.1]{landry2025transverse}, but follows immediately from the fact that a perfect fit rectangle in $\mc P$ would project to a perfect fit rectangle in $\orb$ whose interior is disjoint from $\kappa$. 

Now if $g$ fixes a point $x$ in $\mc P$, then it either has a power $g^k$ fixing the half-leaves at $x$ or $x$ is a completion point and $g$ cyclically permutes the quadrants at $x$.
In the second case, it is clear that no other points are fixed. In the first case,
 it follows easily from the dynamics at $x$ together with the fact that $\mc P$ has no perfect fits that $g$ fixes no points other than $x$. Indeed, otherwise we could find a regular leaf $l$ separating $x$ from another fixed point $y$ so that $l$ intersects a half-leaf $r$ based at $x$. Then we consider the sequence $g^{kn} \cdot l$ as $n \to \infty$. On the one hand, each leaf in this sequence separates $x$ from $y$ and so the sequence cannot exit compact sets of $\mr {\mc P}$. On the other hand, up to replacing $g$ with its inverse, the points $g^{kn} \cdot l \cap r$ exit the end of $r$. This implies that the leaves $g^{kn} \cdot l$ accumulate on
a nonempty chain of leaves one of which makes a perfect fit with $r$; a contradiction.
\end{proof}

\subsection{Ladderpoles}\label{sec:ladderpole}
An abstract veering triangulation can also be characterized by the particular way that $\tau$ enters the ends of $M_0$.
To describe this, we truncate the ends of $M_0$ to get a compact manifold $\mr M$ with boundary. More precisely, take a small regular neighborhood $U_\omega$ of each orbit $\omega$ in $\kappa$, so that $\tau^{(2)}$ intersects $\partial U_\omega$ transversely, and that $\tau^{(2)}\cap \ol{U_\omega}$ is diffeomorphic to the product $(\tau^{(2)}\cap\partial U_\omega)\times[0,+\infty)$. The union $U$ of all $U_\omega$ is called a \emph{tube system} for $\kappa$, and we denote $M\ssm U$ by $\mr M$. The effect of the truncation on each tetrahedron is creating a \emph{tip} for each ideal vertex (see \Cref{fig:taut-tetrahedron} where the tips are colored in purple and green). Each tip is viewed as a triangle with two cusps and one smooth vertex. From now on, we always consider the truncated model of $\tau$, which is $\tau\cap \mr M$, and we continue to denote it by $\tau$.

For any $U_\omega$ in $U$, the intersection $\tau_{\omega}:=\tau^{(2)}\cap \partial U_\omega$ is a train track on a torus with complementary components the tips of the truncated tetrahedra. The train track $\tau_{\omega}$ has a coorientation coming from the coorientation of $\tau^{(2)}$.
We say a tip is \emph{upward} if it has two sides oriented outward, and \emph{downward} if otherwise.

The veering condition turns out to imply that there are finitely many disjoint closed train routes on $\tau_\omega$, called the \emph{ladderpole curves}, so that the following holds: the complement of the ladderpole curves are annuli called \emph{ladders}, and they alternate between downward ladders and upward ladders. Here, a ladder is downward if it is a union of downward tips, and similarly for upward. An example of the ladders on a boundary torus is illustrated in \Cref{fig:prongs}. See \cite{Landry_norm} for details.

Given a closed orbit $\omega$ in $\kappa$, let $\FF^s(\omega)$ be the periodic stable leaf containing $\omega$. Then the component of $\FF^s(\omega)\cap U_\omega$ containing $\omega$ intersects $\partial U_\omega$ in a collection of closed parallel curves, which we call the \emph{stable prong curves}. Similarly, we can define \emph{unstable prong curves} on $U_\omega$. By \cite[Lemma 3.2]{landry2025transverse}, on $\partial U_\omega$, each upward ladder contains exactly one stable prong curve, and each downward ladder contains exactly one unstable prong curve (see \Cref{fig:prongs}).

\begin{figure}[h!]
\begin{center}
\includegraphics[width = 0.3\textwidth]{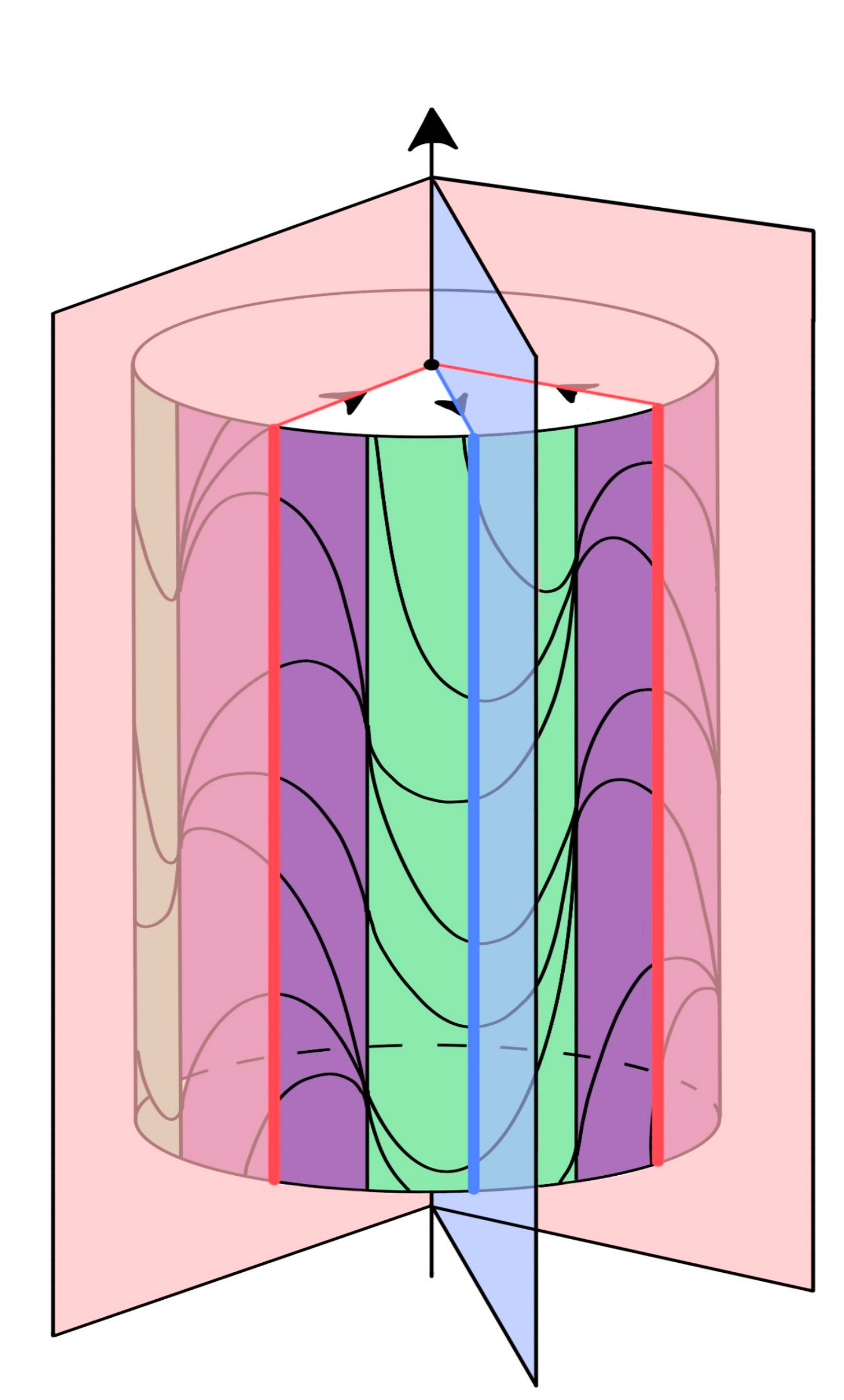}
\caption{Correspondence between prong curves and ladders.}
\label{fig:prongs}
\end{center}
\end{figure}

\subsection{The dual graph and branch cycles}\label{sec:cycles}

The \emph{dual graph} $\Gamma$ of the veering triangulation $\tau$ can be realized as an embedded $4$-valence directed graph in $\mr M$, with the orientation on each edge determined by the coorientation of the face of $\tau^{(2)}$ it crosses. By the combinatorics of a taut triangulation, every vertex has two incoming edges and two outgoing edges. An oriented loop of $\Gamma$ is called a \emph{dual cycle}. For a periodic sequence of tetrahedra that intersect the boundary of a tube in a ladderpole, the corresponding dual cycle $\gamma$ is called a \emph{branch cycle}. This gives a 1-to-1 correspondence between branch cycles and prong curves on $\partial U$.

\subsection{Transverse surfaces}\label{sec:ts}

For a pseudo-Anosov flow $\phi$, we say an embedded closed surface $S$ in $M$ is \emph{almost transverse} to $\phi$ if there is a dynamic blowup $\phi^\#$ of $\phi$ so that $S$ can be isotoped to be transverse to $\phi^\#$. We say the blowup is \emph{minimal} if $S$ intersects every blowup annulus in its core curve. This is equivalent to the condition that
there is no blowup of $\phi$ transverse to $S$ up to isotopy with fewer blowup annuli. 
If $S$ is almost transverse to $\phi$, then $S$ is incompressible, and the class 
$[S]\in H_2(M,\Z)$
has nonnegative algebraic intersection number with every closed orbit of $\varphi$.
Conversely, Mosher \cite{Mos92, mosher1992dynamical} shows that any such class
is represented by an almost transverse surface. Moreover,  the set of almost transverse surfaces, up to isotopy, is characterized in \cite{landry2025transverse}.

An embedded closed surface in $M$ is \emph{relatively carried} by $\tau$ if after an isotopy of $S$, the surface $\mr S := S\ssm U$ is carried by the branched surface $\tau^{(2)}$, and every component of $S\cap U$ is either a meridian disk of $U_\omega$ or a \emph{ladderpole annulus} in $U_\omega$, which is a $\pi_1$-injective annulus with both boundary components ladderpole curves. We call this embedding a carried position of $S$. It is shown in \cite[Theorem E]{landry2025transverse} that an embedded closed surface in $M$ is almost transverse to $\phi$ if and only if it is relatively carried by $\tau$. 

For a closed surface $S$ carried by $\tau$, we can put $S$ in a carried position so that it minimizes the number of ladderpole annuli. This is called \emph{efficient position} in \cite{landry2025transverse}. In particular, there are no ladderpole annuli with adjacent ladderpole boundary components since these can be pulled off using annuli moves (see \Cref{fig:annulus_move}). For more details, see \cite[Section 2.9]{landry2025transverse}.
In practice, we will always assume a closed transverse surface is already in efficient position.

\begin{figure}[htbp]
\begin{center}
\includegraphics[width = .8 \textwidth]{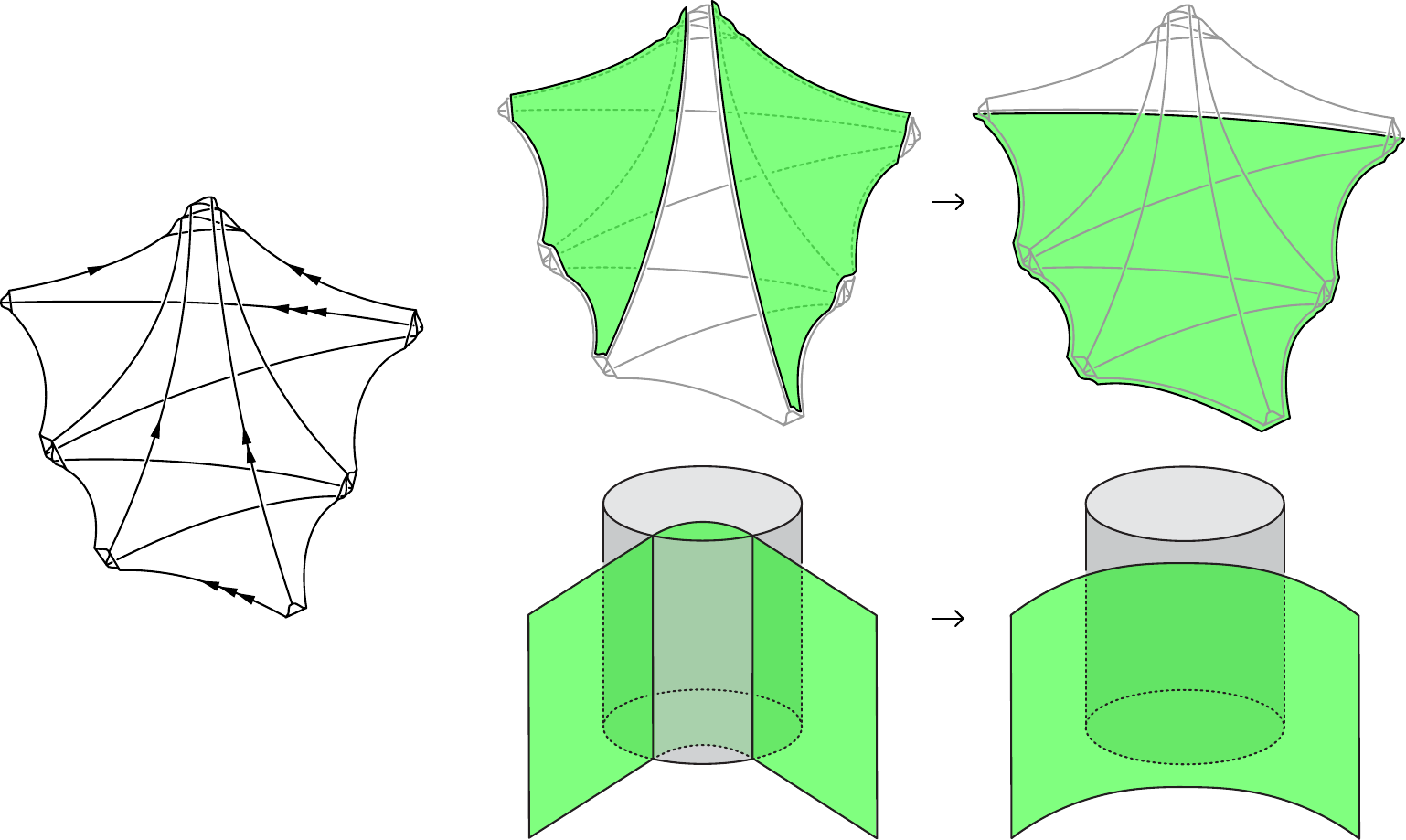}
\caption{An annulus move to remove two adjacent ladderpole components. Figure reproduced from \cite{landry2025transverse}.}
\label{fig:annulus_move}
\end{center}
\end{figure}

\bigskip

\subsection{Veering triangulations for almost pseudo-Anosov flows}\label{sec:apA}
Although the construction of $\tau$ is stated for pseudo-Anosov flow, the case of an almost pseudo-Anosov flow is essentially the same, as we remark below. The reader can find the complete details in \cite[Section 3]{landry2025transverse}.

Let $\phi$ be an almost pseudo-Anosov flow in $M$, and let $\phi^\flat$ be the pseudo-Anosov blowdown. Let $\kappa^\flat$ be a union of closed orbits of $\phi^\flat$ satisfying the conditions to construct a veering triangulation $\tau$ on $M\ssm\kappa^\flat$. For $\phi$, we define $\kappa$ to be the blowup of $\kappa^\flat$, i.e. the preimage of $\kappa^\flat$ under the blowdown map $M \to M$. Then $\kappa$ is a union of all blowup complexes, all singular orbits and a finite number of regular closed orbits killing perfect fits. There is a homeomorphism from $M\ssm\kappa$ to $M\ssm\kappa^\flat$ inducing an orbit equivalent from $\phi|_{M\ssm\kappa}$ to $\phi^\flat|_{M\ssm\kappa^\flat}$, so $\tau$ can be realized as a veering triangulation of $M\ssm\kappa$ so that it is transverse to $\phi$ with the same properties as established above.

One can also recover $\tau$ by running the Agol-Gu\'eritaud construction for $\phi$. The orbit space $\mr \pp$ for $\phi$ on $M\ssm\kappa$ can be identified with the orbit space $\mr \pp^\flat$ for $\phi^\flat$ on $M\ssm\kappa^\flat$, and we can complete $\mr \pp$ to the completed orbit space by the same completion $\pp$ of $\mr \pp^\flat$ by adding ideal points.

When $\phi$ is almost pseudo-Anosov, a tube system $U$ for $\phi$ is a union of tubular neighborhoods of components in $\kappa$, and it can be identified with a tube system for $\phi^\flat$ using the blowdown map. If $\omega$ is a component of $\kappa$, let $\FF^{s}(\omega)$ be the stable leaf containing $\omega$. The stable prong curves on $\partial U_\omega$ is again defined to be the intersection of $\partial U_\omega$ with the component of $\FF^{s}(\omega)\cap U$ containing $\omega$. Similarly we can define the unstable prong curves.  The discussions in \Cref{sec:ts,sec:cycles,sec:ladderpole} continues to hold true for $\mr M:=M\ssm U$.

\section{The core branched surface in $\mr M \cut \mr S$}
\label{sec:branched}

For the setup, let $M$ be a closed atoroidal 3-manifold with an almost pseudo-Anosov flow $\phi$, and $S$ is a closed embedded surface transverse to $\phi$. Let $\kappa$ be a subset of $M$ containing all singular orbits, blowup complexes and a finite collection of regular orbits that kill perfect fits. Let $\tau$ be the veering triangulation of $M \ssm \kappa$ associated to $\phi$. In this section, we use the carried position of $S$ on $\tau$ to analyze the structure of $M \cut S$. In particular, we construct a branched surface $B$, called the \emph{core branched surface} 
which is the main tool to construct properly embedded transverse surfaces in \Cref{sec:transverse}. Roughly speaking, $B$ captures the parts in $\tau^{(2)}$ that are not witnessed by $S$.

In what follows, we set $N = M \cut S$ for notational simplicity, but we still use the notation $M\cut S$ when we wish to emphasize the role played by the surface $S$.

\subsection{Cutting along a carried surface}\label{sec:cut}

Recall that we consider $\tau$ as its truncated model living inside $\mr M$.
Since $S$ is almost transverse to $\varphi$, it can be put in a relatively carried position with respect to some tube system $U$ of $\kappa$ (see \Cref{sec:veering}). Recall from \Cref{sec:ts} that $\mr S$ is defined to be $S\cap \mr M = S\ssm U$.

\begin{figure}[h]
    \centering
    \labellist
    \pinlabel $\tau^{(2)}$ at 20 500
    \pinlabel $W$ at -40 150
    \pinlabel $\ol{\tau}$ at 800 220
    \endlabellist
    \includegraphics[width=4in]{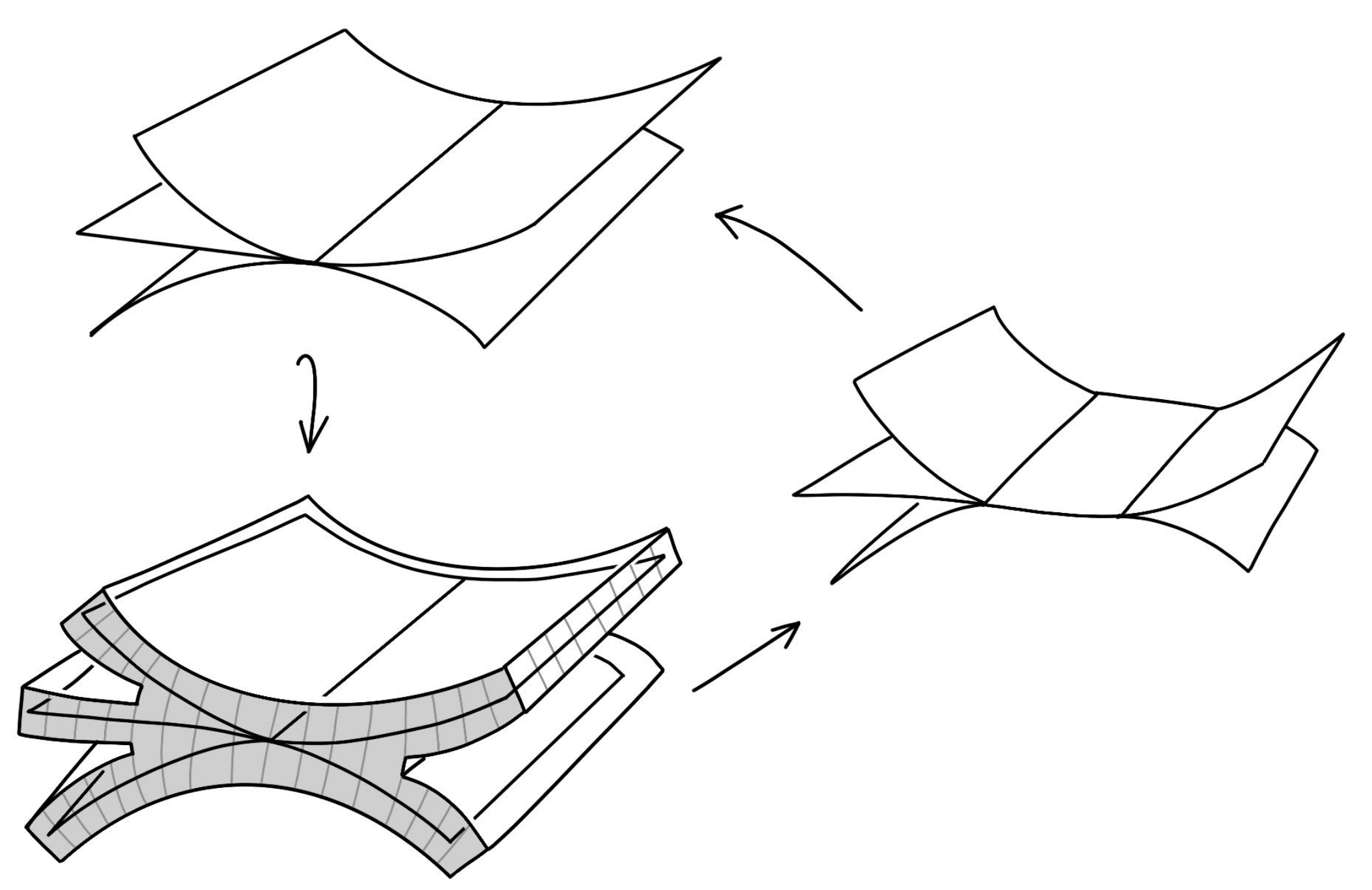}
    \caption{The branched surface $\tau^{(2)}$ can be embedded into the branched surface fibered neighborhood $W$ transverse to the fibers. Collapsing the fibers in $W$ produces $\ol{\tau}$, and collapsing rectangles in $\ol{\tau}$ produces $\tau^{(2)}$. }
    \label{fig:tau-bar}
\end{figure}

To be more precise, we first fix a branched surface fibered neighborhood $W \subset \mr M$ 
of $\tau^{(2)}$. Recall this means that $W$ is foliated by intervals such that the composition of the inclusion $\tau^{(2)} \hookrightarrow W$, which embeds $\tau$ transverse to the fibers, together with the map $W \twoheadrightarrow \tau^{(2)}$ that collapses the fibers is near the identity; see \cite[Section 1]{floyd1984incompressible}. Since $\tau^{(2)}$ is transverse to the flow, outside of a small neighborhood of $\partial \mr M$, we may take the $I$-fibers of $W$ to be flow segments of $\varphi$ .

Technically speaking, the branched surface we get by collapsing the fibers of $W$ is a slightly different branched surface $\ol{\tau}$, obtained from $\tau^{(2)}$ by replacing every edge with a rectangle (\Cref{fig:tau-bar}). Note that $\ol{\tau}$ has only one-sided branching at each arc in its branch locus, which is assumed in the discussion of \cite{floyd1984incompressible}. However, to avoid additional technicalities, for now we ignore the rectangles in $\ol{\tau}$, and consider $\tau^{(2)}$ as the base of $W$. The quotient map $W\to\tau^{(2)}$ is the composition of the fiber-collapsing map $W\to\ol{\tau}$, with the map $\ol{\tau}\to\tau^{(2)}$ that collapses rectangles to edges.

For each component $\omega$ of $\kappa$, the associated component $U_\omega$ in $U$ is a solid torus and the components of $U_\omega \cap S$ are either a collection of disks or a collection of ladderpole annuli (see \Cref{sec:ts}). We realize $S$ in relatively carried position so that, after an isotopy in $M$, $\mr S \subset \mr M$ is properly embedded, contained in $W$ where it is transverse to the $I$-fibers, and is obtained from $S$ by removing disks and annuli in $U\cap S$. We say $U_\omega$ is a \emph{disjoint, meridian, or ladderpole} tube if $U_\omega\cap S$ is empty, a collection of meridian disks, or a collection of ladderpole annuli, respectively.

We now examine more closely the structure of $\mr M \cut \mr S$, which we denote by $\mr N$.
The boundary $\partial \mr N$ consists of the following parts:
\begin{enumerate}
    \item two copies of $\mr S$, which we denote by $\partial_+\mr N = \mr S^+$ and $\partial_-\mr N = \mr S^-$ with $\phi$ flowing outwards on $\mr S^+$ and inwards on $\mr S^-$;
    \item annuli coming from cutting the boundary of meridian tubes by $\mr S$. For simplicity, we will adjust $\partial \mr N$ via small proper isotopy in $N$ so that these annuli are flow parallel. Here, the isotopy is small in the sense that it is supported in a neighborhood of the ends of $(M\ssm\kappa)\cut S$ where the triangulation looks like a product;
    \item annuli coming from cutting the boundary of ladderpole tubes by $\mr S$. Such annuli can be further divided into two categories: those with one boundary on $\mr S^+$ and the other one on $\mr S^-$, and those with both boundary components are on $\mr S^+$ or $\mr S^-$;
    \item tori that are the boundary of disjoint tubes.
\end{enumerate}

The cut subspace $W \cut \mr S \subset \mr N$ inherits a fibered structure from $W$ since $\mr S$ is transverse to the fibers. Let $W_I$ be the $I$-bundle defined as the union of fibers that run between $\mr S^-$ and $\mr S^+$. Roughly speaking, this is the part where $\mr S$ runs parallel to itself in the carried position. Collapsing the $I$-fibers of $W_I$ gives a surface $Y$. By construction $Y$ is naturally carried by $\ol \tau$ and so it has a cell structure induced from that of $\ol \tau$. To simplify our discussion, we alter the cell structure as follows: for any component of $Y$ that is not a rectangle mapping homeomorphically to a rectangular cell of $\ol \tau$ 
we instead take the cell structure induced by further projecting to $\tau^{(2)}$. In particular, $Y$ is tessellated by truncated triangles, except for components which are rectangles that correspond to edges of $\tau^{(2)}$. 

\smallskip

We introduce some notations to describe this cellular structure on $Y$.
For a 2-cell of $\ol \tau$ (either a rectangle or a truncated triangle), an edge of it is \emph{long} if it is in the branched locus, and is \emph{short} if it is in $\partial \ol \tau$. Then we can divide the 1-cells in $Y$ into short and long edges naturally, since each $2$-cell of $Y$ corresponds to either a truncated triangular sector or a rectangular sector of $\ol \tau$.

Recall that $W_I$ is an $I$-bundle over $Y$. We define the \emph{vertical boundary} of $W_I$, denoted by $\partial_v W_I$, to be the collection of annuli on $\partial W_I$ fibering over $\partial Y$. The subset of $\partial_v W_I$
fibering over a long edge of $\partial Y$ is called a \emph{long rectangle}, and similarly we define \emph{short rectangles} to be the parts fibering over short edges of $\partial Y$. Every boundary component of $Y$ is either \emph{totally short}, consisting completely of short edges (which corresponds to a component of $\partial_v W_I$ contained in $\partial U$), or is \emph{mixed}, being a cycle alternating between long edges and finite sequences of short edges (see \Cref{fig:Y} for an example). We denote the union of totally short components by $\partial_{\text{ts}} Y$, and the union of mixed components by $\partial_{\text{mix}} Y$. Similarly we also have $\partial_\text{ts} W_I$ and $\partial_\text{mix} W_I$, which are respectively the part of $\partial_v W_I$ fibered over $\partial_\text{ts} Y$ and $\partial_\text{mix} Y$, and are called totally short annuli and mixed annuli respectively.

\begin{figure}[h]
\begin{center}
\includegraphics[width = .4 \textwidth]{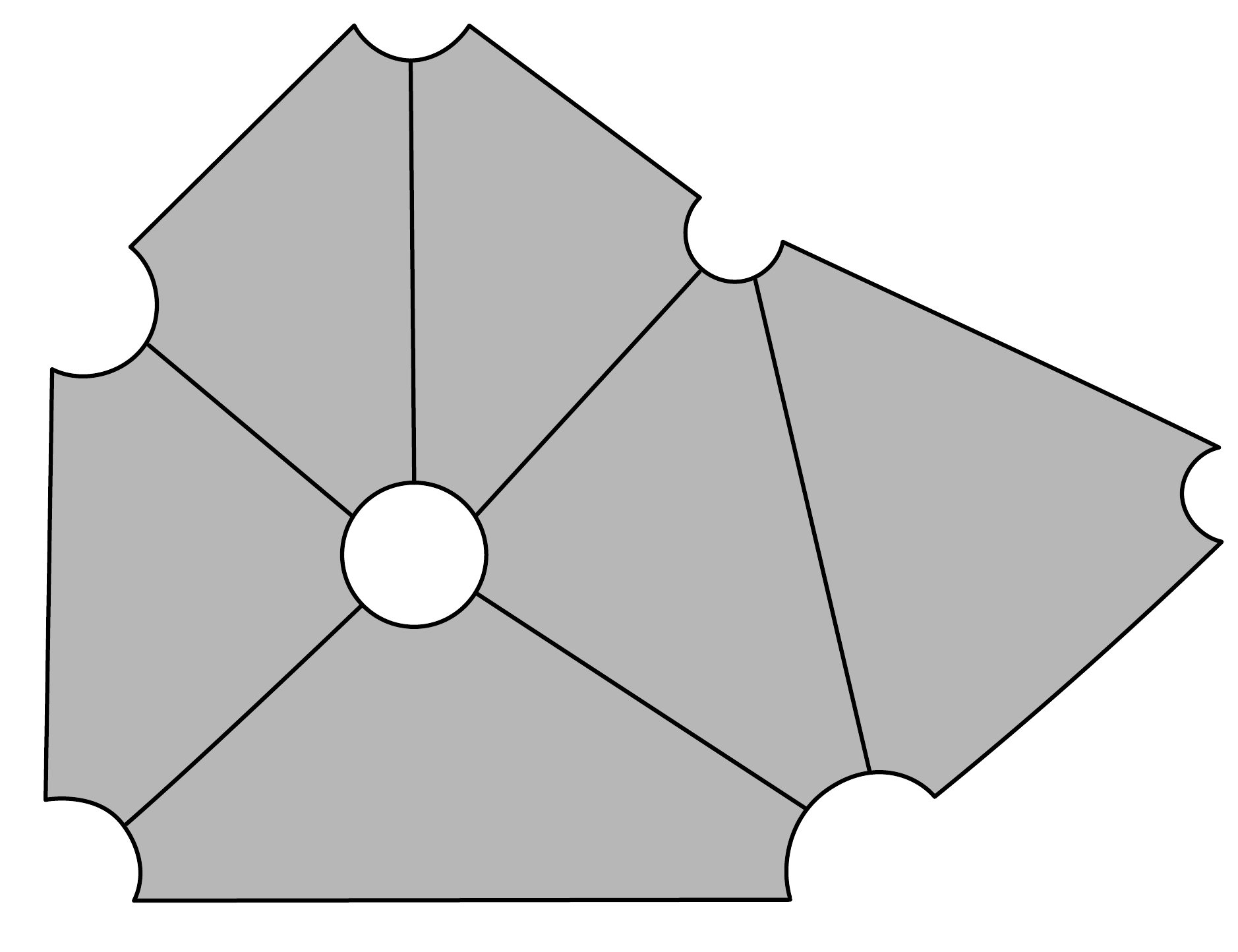}
\caption{A possible shape of a component of $Y$ where $\partial Y$ has one mixed (outer) component and one totally short (inner) component.}
\label{fig:Y}
\end{center}
\end{figure}

\smallskip

\subsection{The core branched surface $B \subset \mr M \cut \mr S$}\label{sec:construct-core}
Define the submanifold $\mr M_{\mr S}=\mr N\cut W_I$ and note that $(W \cut \mr S) \cut W_I$ is itself a branched surface fibered neighborhood in $\mr M_{\mr S}$ of a branched surface $B'$. This is to say that no fiber of $(W \cut \mr S) \cut W_I \subset \mr M_{\mr S}$ runs between the boundary components $\partial_\pm \mr M_{\mr S}$ and so collapsing $(W \cut \mr S) \cut W_I$ along these fibers (and keeping endpoints along $\partial_\pm \mr M_{\mr S}$ 
fixed) produces a properly embedded branched surface $B'$ in $\mr M_{\mr S}$. For details, see \cite[Section 1]{floyd1984incompressible} as well as the construction of the `fiber branched surface' in Section 4 of \cite{oertel1984incompressible}.
Technically, to follow the description in these references verbatim, one should first double $\mr M_{\mr S}$ along $\partial_\pm \mr M_{\mr S}$, apply the collapsing map, then restrict back to $\mr M_{\mr S}$, but the end result is the same.

The sectors of $B'$ are in bijection with the sectors of $\tau^{(2)}$ that are \emph{not} traversed by $\mr S$. More precisely, the composition $B' \hookrightarrow (W \cut \mr S) \cut W_I \subset W \twoheadrightarrow \tau^{(2)}$ induces the desired bijection and is near the identity on each sector.
Moreover, each long edge of each such sectors is either interior to $B'$ (i.e. paired with other long edges of sectors in $B'$), smoothed into $\partial_\pm (\mr N)$, or contained in a long rectangle in $\partial_{\mathrm{mix}} W_I$. 
The cases depend on how $\mr S$ traverses the associated edge of $\tau^{(2)}$.
See \Cref{fig:B'}.

\begin{figure}[h]
    \centering
    \includegraphics[width=4in]{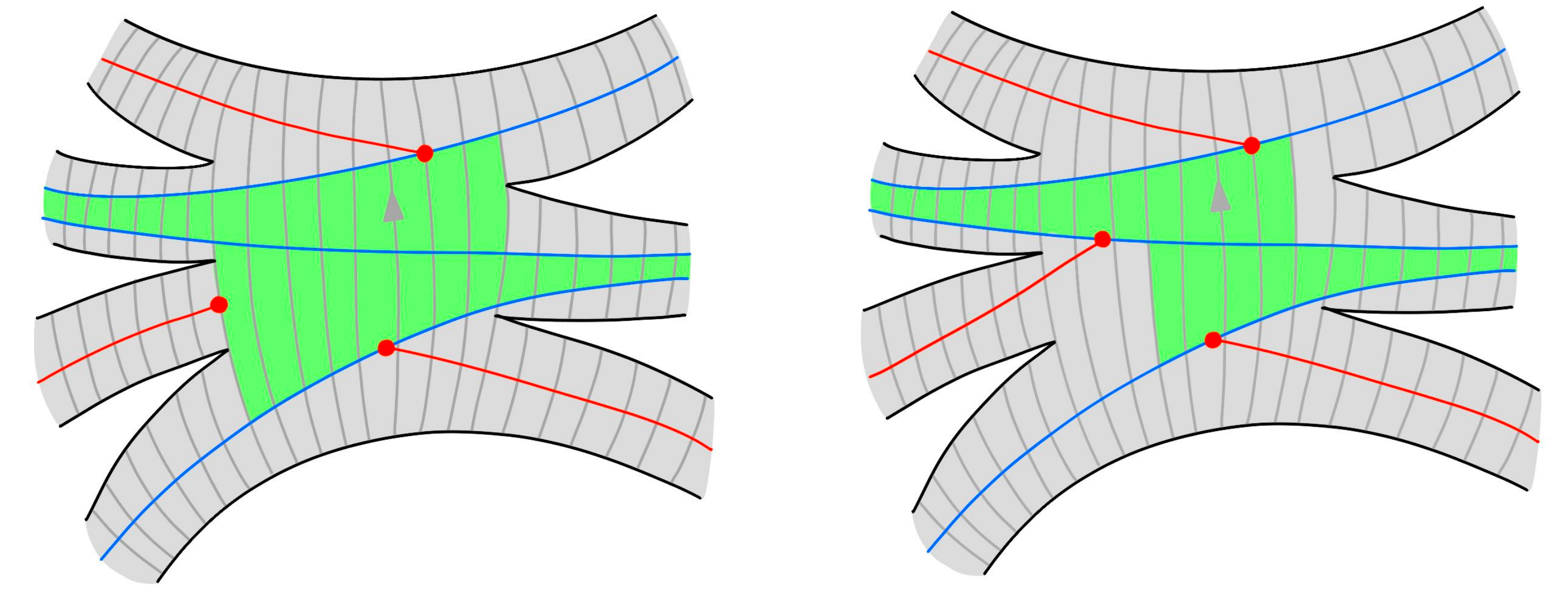}
    \caption{Constructing $B$ and $B'$ in $\mr M_{\mr S}$. This is a local picture in $W$ with the $I$-fibers drawn in darker gray. The flow direction is upward. The carried surface $\mr S$ is blue, $W_I$ is shaded green, and $B'$ is red on the left. The right figure indicates the modification of $W_I$ (see \Cref{rmk:adjust}) and the construction of $B \subset \mr M \cut \mr S$ from $B'$, with $B$ drawn in red.}
    \label{fig:B'}
\end{figure}

We next modify $B'$ to obtain a branched surface that is properly embedded in $\mr N$ (whereas $B'$ is properly embedded in $\mr M_{\mr S}$). Note that by construction $\partial B'$ can meet the long rectangles in $\partial_\text{mix} W_I$. (See for example the leftmost black dot in \Cref{fig:B'}.)
To fix this, we push $\partial B'$ on these rectangles slightly into $W_I$ and vertically up to $\mr S^+$ while keeping the branched surface embedded in $W$ and transverse to the fibers of $W$. We call the new branched surface $B$ and note that $B$ is properly embedded in $\mr N$. Since this isotopy can be done transverse to the fibers of $W$, $B\cap\partial U$ remains carried by the train track $\tau^{(2)}\cap\partial U$. 

To summarize: the branched surface $B$ is properly embedded in $\mr N$, 
transverse to the fibers (of $W$) and co-oriented by the flow direction; in particular, $\partial B\subset \mr S^+\cup \mr S^-\cup (\partial U\cut S)$.
We remark that by construction, the branch locus of $B$, like that of $B'$, occurs only at edges of $\tau^{(2)}$ that are \emph{not} traversed by $\mr S$. 

\begin{remark}[Adjusting $\mr M_{\mr S}$]\label{rmk:adjust}
Since portions of $\partial B'$ were pushed slightly into $W_I$ during the construction of $B$, the branched surface $B$ itself is not totally contained in $\mr M_{\mr S}$. Since such containment will be useful moving forward (see \Cref{lem:surface-represents-class}), it can be achieved by slightly modifying $\mr M_{\mr S}$ as follows: first recall that $W_I$ is an oriented $I$--bundle over $Y$. Let $Y' \subset Y$ be a compact subsurface isotopic to $Y$ obtained by pushing the long edges of $Y$ into its interior, keeping the short edges fixed,
and let $W_I'$ be the subbundle of $W_I$ over $Y'$. 
Then the isotopy from $B'$ to $B$ can be done in the complement of $W_I'$ so that $B$ is contained in $\mr N \cut W_I'$, while still properly embedded in $\mr N$. See also \Cref{fig:B'}. Moving forward, we now set $\mr M_{\mr S} = \mr N \cut W_I'$ so that $B \subset \mr M_{\mr S} \subset \mr N$. 
\end{remark}

\subsection{An alternative description of $B$}

To make the construction clear, we describe an alternative description of the core branched surface $B$. In $\mr N$, we push $W\cut \mr S$
off $\mr S^\pm$ and call the resulting submanifold $W'$. Then $W'$ is a branched surface neighborhood of an embedded branched surface $B^*$ in $\mr N$. The branched surface $B^*$ is constructed by first collapsing the $I$-fibers of $W'$ and then collapsing the rectangles to edges, similar to how we get $\tau^{(2)}$ from $W$ and $\ol{\tau}$ in \Cref{fig:tau-bar}. After embedding $B^*$ in $W\subseteq\mr M$, the quotient map $W\to\tau^{(2)}$ restricted on $B^*$ maps sectors to sectors up to small error (the error is the same as why $\tau^{(2)}\to W\to\tau^{(2)}$ is only near the identity). Therefore, each sector in $B^*$ has a unique sector in $\tau^{(2)}$ associated to it.
\par
In $\mr N\cut B^*$, there are two product neighborhoods $\mr S^\pm\times[0,1]$ of $\mr S^\pm$. In other words, $\mr S^+$ is carried by the `topmost' subsurface $B^+$ in $B^*$, and similarly $\mr S^-$ is carried by the `bottommost' subsurface $B^-$ in $B^*$. Both $B^+$ and $B^-$ are unions of sectors in $B^*$. The sectors in $B^-$ and $B^+$ correspond to sectors in $\tau^{(2)}$ traversed by $\mr S$. The intersection of $B^+$ and $B^-$ can be non-empty because $S$ can traverse an edge or a sector. The intersection, if non-empty, is a union of 1- and 2-cells of $B^*$,
and the union of the cells correspond to the surface $Y$ defined above.
Note again that a component of $B^+\cap B^-$ consisting of a single edge corresponds to a rectangular component of $Y$.
\par
We next modify $B^*$ to obtain the branched surface $B$ that is properly embedded in $\mr N$. First, we consider the sectors in $B^*$ that are not in $B^+\cup B^-$, and denote the union of these sectors by $B^*_m$ ($m$ for middle). The boundary of $B^*_m$ consists of edges contained in $B^-$ and $B^+$. For edges in $B^-$ but not in $B^+$, we push them to $\mr S^-$ and smoothen out according to the coorientation. 
Similarly, for edges in $B^+$ but not in $B^-$, we push them to $\mr S^+$ and smoothen out. After this step, we get an intermediate branched surface, which is exactly $B'$ in \Cref{sec:construct-core}. 
Finally, for edges in $B^-\cap B^+$, we push them up to $\mr S^+$ as well. After this operation we obtain a properly embedded branched surface, which is the core branched surface $B$, in $\mr N$ transverse to $\phi$.

\medskip

We are now ready to start the construction to prove \Cref{thm:properly-embedded-tst}. The proof will take up \Cref{sec:nonneg-class} and \Cref{sec:fill}.

\section{From nonnegative classes to transverse surfaces in $\mr M \cut \mr S$} \label{sec:nonneg-class}

Let $\rm{Orb}(M\cut S)$ denote the collection of closed orbits of $\varphi$ in $M \cut S$, where $\phi$ is an almost pseudo-Anosov flow transverse to $S$.
As in the introduction, a class $\eta \in H^1(M\cut S)$ is \emph{nonnegative} (resp. \emph{positive}) if it is nonnegative (resp. positive) on the oriented orbits of $\rm{Orb} (M \cut S)$. We remark that $\rm{Orb} (M \cut S)$ as a set of isotopy classes of oriented closed loops (hence also the definition of nonnegative/positive classes) is independent of the choice of the transverse blowup $\phi$. This is because the combinatorial type of the blowup complexes of a minimal blowup transverse to $S$ is unique by \cite[Theorem D]{landry2025transverse}, and any other almost pseudo-Anosov flows transverse to $S$ are further blowups of the minimal one.

The main goal of this section and the next section is to prove \Cref{thm:transverse-surface}, which we restate in the following form.

\begin{theorem}\label{thm:properly-embedded-tst}
Let $\varphi$ be an almost pseudo-Anosov flow on a closed atoroidal $3$-manifold $M$ and let $S$ be a surface transverse to $\varphi$. Suppose $\varphi$ is minimally blown up with respect to $S$. Then for any nonnegative class $\eta \in H^1(M\cut S)$, there is a properly embedded essential surface $\Sigma \subset M\cut S$ representing $\eta$ and a dynamic blowup of $\phi$ transverse to both $S$ and $\Sigma$.
\end{theorem}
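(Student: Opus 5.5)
Our plan follows the approach of the transverse surface theorems of Mosher, Landry, and Landry--Minsky--Taylor, applied now to the core branched surface $B\subset \mr N$ of \Cref{sec:branched} in place of $\tau^{(2)}$.

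\textbf{Step 1: reduce to a weight problem on $B$.} Every class $\eta\in H^1(N)$ restricts to a class on $\mr M_{\mr S}$ (\Cref{rmk:adjust}). We look for a nonnegative weight system $w$ on the sectors of $B$ that satisfies the branch equations at the interior branch locus. Its boundary values on $\mr S^\pm$ and on $\partial U\cut S$ are left free. We want the carried surface $\Sigma_w$ to represent $\eta$. Since $\Theta(\wt{\mr M})$ and the dual graph $\Gamma$ encode $H_1$, a cocycle on $\Gamma$ restricted to $\mr M_{\mr S}$ is the same thing as an assignment of real numbers to the faces of $\tau$ not traversed by $\mr S$, i.e. to sectors of $B$. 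Such an assignment satisfies the cocycle condition exactly when the branch equations hold at edges not traversed by $\mr S$. The edges traversed by $\mr S$ impose no condition, because there the sectors are smoothed into $\partial_\pm\mr N$ or lie in $W_I$.

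\textbf{Step 2: Farkas duality.} Following \cite{Landry_norm, landry2025transverse}, a class is carried with nonnegative weights if and only if it is nonnegative on every directed cycle of the part of $\Gamma$ lying in $\mr M_{\mr S}$. This part is the dual graph restricted to tetrahedra minus the $W_I$ region. Cycles that leave through $\mr S^+$ cannot return without crossing $S$, so only cycles contained in $N$ count. We then show that every such directed dual cycle is homotopic in $N$ to a nonnegative combination of closed orbits of $\phi$ in $N$ and of branch cycles, the latter being peripheral loops around $\kappa$ that are orbits in $\kappa$ or multiples of them. This uses the correspondence between dual cycles and flow loops from \cite{LMT21}, checked inside the cut manifold using the fact that $B$ is transverse to $\phi$. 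With this, nonnegativity of $\eta$ on $\mathrm{Orb}(N)$ yields $w\geq 0$.

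\textbf{Step 3: the tubes.} The surface $\Sigma_w$ has boundary on $\partial U$, and we must cap it inside $U$. On disjoint tubes, $\eta$ evaluated on the core orbit gives the sign data, and we cap with meridian disks or ladderpole annuli as in \cite[Theorem E]{landry2025transverse}. If $\eta$ vanishes on a core orbit, the boundary curves are ladderpole curves, and we add ladderpole annuli, blowing up the orbit further if needed. This is where the dynamic blowup transverse to both $S$ and $\Sigma$ is produced. On meridian and ladderpole tubes cut by $S$, the pieces are handled the same way inside $U\cut S$.

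\textbf{Step 4: cleanup.} We make $\Sigma$ embedded and essential. Cut-and-paste at branch locus double curves keeps the surface carried. Compressing disks or $\partial$-compressing disks for a carried surface can be removed by the standard Floyd--Oertel argument, since $B$ has no disks of contact, being transverse to a flow without null-homotopic orbits. Discarding null-homologous closed components then leaves a surface still representing $\eta$.

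I expect \textbf{Step 2} to be the main obstacle. The hard part is showing that directed cycles in the dual graph of the cut triangulation are homotopic in $N$ to nonnegative combinations of genuine closed orbits in $N$. A cycle could a priori correspond to a flow loop that crosses $S$. To rule this out I would first use that $\mr S$ is carried, so any dual-graph edge crossing a traversed face leaves $\mr M_{\mr S}$. I would then lift to $\mc P$ and use \Cref{lem:npfs}: the deck element of a cycle fixes a unique point, which yields an orbit in the same free homotopy class, and this orbit lies in $N$ because its flow segment avoids $\wt S$.
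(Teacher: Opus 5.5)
\paragraph{Main gap: passing from $\mr N$ to $N$.} Your Steps 1--2 follow the paper's Section 4 closely: weights on $B$ come from a nonnegative cocycle on $\Gamma\cut S$ via \cite[Lemma 5.10]{LMT20}. The genuine gap is in passing from $\mr N$ to $N=M\cut S$.

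A surface carried by $B$ misses the product region $W_I'$ where $\mr S$ runs parallel to itself. So it represents only the restriction of $\eta$ to $\mr M_{\mr S}$ (\Cref{lem:surface-represents-class}). But $\eta$ need not vanish on loops through $W_I$, for example nonperipheral loops in $Y$, or loops crossing $W_I$ between different components of $\partial Y$. Nothing in your Steps 3--4 accounts for this.

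Relatedly, capping off in the tubes cut by $S$ is not local, contrary to ``handled the same way inside $U\cut S$''. Two cases fail:
\begin{itemize}
\item For a meridian tube, the pieces of $U_\omega\cut S$ are balls $D^2\times I$, and $\partial\mr\Sigma$ can cross their side annuli in arcs from $\mr S^-$ to $\mr S^+$.
\item For a ladderpole tube, capping the thick annuli of a component $C$ by transverse meridian disks of $C$ leaves vertical arcs on its thin annuli.
\end{itemize}

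The missing idea is the paper's Step 4:
\begin{enumerate}
\item Enlarge $W_I$ to an $I$-bundle $W_I^+\to Y^+$ by adding the meridian-tube balls.
\item Choose a cooriented arc system $\Xi\subset Y^+$ with $\partial\Xi=Q$ (the leftover arcs) representing $\eta|_{Y^+}$.
\item Attach the tilted bands fibered over $\Xi$.
\item Use Mayer--Vietoris for $N=(N\cut W_I^+)\cup W_I^+$ to see that the remaining discrepancy with $\eta$ is supported on the flow-parallel annuli $\partial Y^+\times I$, and remove it by an oriented sum.
\end{enumerate}
Without such a step, your surface is either not properly embedded in $N$ or does not represent $\eta$.

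\paragraph{Steps that need more justification.}
\begin{itemize}
\item \emph{Uniformity and divisibility in ladderpole tubes.} You must check that for each component $C$, $\partial\mr\Sigma$ meets all annuli of $\partial C\cap\partial U_\omega$ the same way (all ladderpoles or all arcs). In the arc case you also need the number of arcs to be divisible by $n$, where $[\mathrm{core}(A_1)]=n[\mathrm{core}(C)]$. Both follow from $\langle\mr\Sigma,\mathrm{core}(A)\rangle_{\mr M_{\mr S}}=\langle\eta,\mathrm{core}(A)\rangle_N$.
\item \emph{Compatibility of the blowup with $S$.} In the ladderpole case, the new blowup must come from a coherent arc system containing $A_S$, invariant under the rotation modeling the return map. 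Only then is it a further blowup of $\phi$ that is still transverse to $S$.
\item \emph{Homotopy in $N$ in Step 2.} Showing that the orbit fixed by the deck element is disjoint from $S$ does not by itself give a homotopy \emph{in} $N$, which is what evaluating $\eta\in H^1(N)$ requires. The paper gets this from the positively transverse homotopy in \Cref{prop:Phi}. For branch cycles it uses efficient position of $S$, which keeps the prong annulus from a prong curve to an orbit of the blowup complex off $S$.
\item \emph{Essentiality.} Floyd--Oertel does not apply to the final surface, since it is no longer carried by $B$. Instead, use that transverse surfaces are incompressible and that boundary compressions can be done preserving transversality \cite[Lemmas 3.6 and 3.7]{HT2026}.
\end{itemize}
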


In this section, we will show as the first step that for any nonnegative $\eta\in H^1(M\cut S)$, we can construct a properly embedded surface $\mr \Sigma \subset \mr M \cut \mr S$ that is carried by the core branched surface $B$. Moreover, as an embedded surface of the 
submanifold $\mr M_{\mr S}$ (see \Cref{rmk:adjust})
it represents the pull-back of $\eta$ in $\mr M_{\mr S}$. This is achieved in \Cref{lem:surface-represents-class}. We will then extend this surface to be properly embedded in $M \cut S$ in \Cref{sec:transverse}.

\smallskip

Recall the dual graph $\Gamma$ of the triangulation $\tau$ is a directed graph embedded in $\mr M$ whose edges are oriented by the coorientation on the faces of $\tau$, which themselves are positively transverse to $\varphi$. Let $\mc C_\Gamma$ be the set of directed cycles of $\Gamma$, called \emph{dual cycles}, and note that each closed path in $M \ssm \kappa$ that is positively transverse to $\tau^{(2)}$ determines a unique dual cycle by considering the associated sequence of tetrahedra through which it passes.
Let $\mathrm{Orb}(M \ssm \kappa)$ be the set of closed orbits of $\varphi$ that are contained in $M \ssm \kappa$ (i.e. not those of $\kappa$) together with stable and unstable prong curves in each component of $\partial \mr M$. 
Recall from \Cref{sec:ladderpole} that for each component of $\partial \mr M$, each upward ladder contains exactly one stable prong curve and each downward ladder contains exactly one unstable prong curve. Also recall that each ladder determines exactly one 
dual cycle, which we call its \emph{branch cycle}.

The following is essentially proven in \cite[Section 6]{LMT21}, although there a more refined correspondence is proven using the \emph{flow graph}. Here, we need to work with the dual graph because of the relation to carried surfaces.

\begin{proposition}\label{prop:Phi}
There is a surjective map $\Psi \colon \mc C_\Gamma \to \mathrm{Orb}(M \ssm \kappa)$ such that
\begin{itemize}
\item if $\gamma$ is not a branch cycle, then $\Psi(\gamma)$ is the unique orbit homotopic to $\gamma$,
\item if $\gamma$ is a branch cycle in an upward/downward ladder of a cusp, then $\Psi(\gamma)$ is the unique stable/unstable prong curve contained in that ladder.
\end{itemize}
Moreover, in each case $\gamma$ is homotopic to $\Psi(\gamma)$ within $M\ssm \kappa$ through curves that are positively transverse to $\tau$. 
\end{proposition}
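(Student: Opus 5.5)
We would construct $\Psi$ by lifting to the completed orbit space $\pp$ and using the correspondence between dual cycles and periodic chains of maximal rectangles. A dual cycle $\gamma$ is a periodic bi-infinite sequence of tetrahedra $(t_i)_{i\in\Z}$, each consecutive pair glued along a face crossed positively. Lift $\gamma$ to $\wt{M_0}$. The lifted tetrahedra correspond to maximal rectangles $R_i\subset\pp$, and they are invariant under the deck transformation $g\in\pi_1(M_0)$ representing $\gamma$, with $g R_i = R_{i+n}$. Because we pass through a top face from $t_i$ to $t_{i+1}$, the rectangle $R_{i+1}$ is ``taller and thinner'' than $R_i$: its stable extent shrinks and its unstable extent grows, while the two rectangles share three ideal points. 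We would then consider the nested intersection of the stable sides, i.e. the limit of the vertical strips as $i\to+\infty$, and of the horizontal strips as $i\to-\infty$. Together these pin down a $g$-invariant structure in $\pp$.

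\textbf{Case 1: not a branch cycle.} If the sequence does not eventually consist of tetrahedra all meeting a single cusp along a ladder, then both the stable widths and the unstable heights must genuinely shrink to a point in the two time directions. The intersection is then a single point $p\in\mr\pp$ fixed by $g$. The point $p$ corresponds to a closed orbit of $\phi$ in $M\ssm\kappa$ homotopic to $\gamma$, and we set $\Psi(\gamma)$ to be this orbit. It is unique by \Cref{lem:npfs}, since $g$ fixes at most one point of $\pp$. To see that $p$ is nonideal and that the limit is a point rather than a segment, we would use the absence of perfect fits: a limiting nondegenerate leaf segment would produce a perfect fit rectangle.

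\textbf{Case 2: branch cycle.} If $\gamma$ is a branch cycle, all $R_i$ share a common ideal corner $x$ (a lift of a cusp), and $g$ fixes $x$. The rectangles then accumulate onto a half-leaf at $x$, which is stable for upward ladders and unstable for downward ladders. We set $\Psi(\gamma)$ to be the corresponding prong curve, matching \cite[Lemma 3.2]{landry2025transverse}.

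\textbf{Homotopy and surjectivity.} For the homotopy statement, we would build an explicit homotopy through positively transverse curves. Each tetrahedron $t_i$ contains the orbit segments whose projections lie in $R_i$. The closed orbit through $p$ (or a curve near the boundary torus following the prong) can therefore be concatenated tetrahedron by tetrahedron with the dual edges, by straight-line homotopies within each tetrahedron that stay transverse to $\tau^{(2)}$, using that the $I$-fibers are flow segments. For surjectivity, given a closed orbit $c\subset M\ssm\kappa$, its lift $p\in\mr\pp$ is fixed by some $g$. The sequence of tetrahedra crossed by $c$ (transverse to $\tau^{(2)}$ by \cite[Theorem C]{LMT21}) gives a dual cycle mapping to $c$. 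Prong curves are hit by the branch cycles of their ladders.

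\textbf{Main obstacle.} We expect the main difficulty to be the dichotomy in Case 1: showing that a non-branch dual cycle yields a nested sequence shrinking to a single nonideal point, and cannot instead converge to an ideal point or a leaf segment. We would first try to adapt \cite[Section 6]{LMT21}, comparing dual cycles with flow graph cycles. The key step is that shared ideal points persisting forever force the cycle to be a branch cycle, and the no-perfect-fits property rules out the remaining degenerations.
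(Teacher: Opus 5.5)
Your overall route matches the paper's. You lift $\gamma$ to a $g$--invariant chain of maximal rectangles $R_i$ in $\pp$ and take the $g$--fixed point $p$ they determine. You split according to whether $p$ is a completion point, which is equivalent to $\gamma$ being a branch cycle. Uniqueness of the orbit comes from \Cref{lem:npfs}. Surjectivity comes from the dual cycle traced out by an orbit and from the branch cycle of each ladder. For the facts that $\bigcap_i R_i$ is a single point and that $p$ is a completion point exactly for branch cycles, the paper simply quotes \cite[Section 6.3]{LMT21} and the cusp discussion before Lemma 6.13 there. Your plan to rederive these from the ladder structure and the absence of perfect fits is reasonable. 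In the branch case your homotopy is also fine: the prong curve and the branch cycle do traverse the same tetrahedra, exactly as in the paper.

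The step that fails as written is the homotopy through positively transverse curves in the non-branch case. It rests on the claim that $t_i$ contains the orbit segments whose projections lie in $R_i$. From this you conclude that the closed orbit $\omega$ through $p$ runs through $t_0,t_1,\dots$ in order, so it can be matched to $\gamma$ by straight-line homotopies inside each tetrahedron. That claim is false. The shadow of $t_i$ is only the region bounded by the shadows of its four equatorial edges, which is in general a proper subset of $R_i$. So $p\in\bigcap_i R_i$ does not force $\omega$ to meet every $t_i$.

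For instance, suppose $\omega$ passes from the tetrahedron below an edge $e$, through the side tetrahedra on one side of $e$, to the tetrahedron above $e$. Then $p$ lies in the edge rectangle of $e$, which is contained in the rectangle of every tetrahedron around $e$. Now reroute the dual cycle of $\omega$, $g$--equivariantly, through the side tetrahedra on the \emph{other} side of $e$. This gives a different dual cycle whose rectangles all contain $p$, so its $\Psi$--image is $\omega$, yet $\omega$ never enters the rerouted tetrahedra.

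This non-injectivity of $\Psi$ is exactly why no tetrahedron-by-tetrahedron homotopy exists. The homotopy has to sweep across edges of $\tau^{(2)}$; this is allowed, since curves crossing $e$ upward are still positively transverse. You then need a global argument that $\gamma$ differs from an orbit-following path only by such moves. That is the content of \cite[Proposition 6.7]{LMT21}, which the paper invokes at this point. Without it or a replacement, the ``moreover'' clause in the non-branch case is not established. That clause is what \Cref{lem:pullback} later uses to keep the homotopy disjoint from $S$.
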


\begin{proof}
In order to confirm that all homotopies take place in $M \ssm \kappa$, rather than $M$, the argument takes place in the completed flow space $\mc P$ of $M \ssm \kappa$. See \Cref{sec:veering} or \cite[Section 4]{LMT21} for additional details. Also, recall from \Cref{lem:npfs} that each nontrivial element of $\pi_1(M)$ fixes at most one point in $\mc P$.

Following \cite[Section 6.3]{LMT21},
the dual graph $\Gamma \subset M\ssm \kappa$ has a preimage $\wt \Gamma$ in $\wt{ M \ssm \kappa}$ and for any dual cycle $\gamma$ we fix a lift $\wt \gamma$ to $\wt \Gamma$ and a $g\in \pi_1(M\ssm \kappa)$ that generates its stabilizer and translate $\wt \gamma$ in the positive direction. The dual line $\wt \gamma$ is associated to a sequence of maximal rectangles in $\mc P$ whose intersection is a single point $p \in \mc P$ such that $g \cdot p = p$.

The local structure of the veering triangulation around a cusp (see the discussion preceding Lemma 6.13 in \cite{LMT21}) shows that the point $p$ is a completion point of $\mc P$ if and only if the sequence of rectangles all contain $p$ in their boundary and this occurs if and only if $\gamma$ is a branch cycle. 

If $p$ is not a completion point of $\mc P$, then its preimage in $\wt{ M \ssm \kappa}$ is an orbit $\wt \omega$ that is also fixed by $g$. This determines a closed orbit $\omega = \wt \omega / \langle g \rangle \subset M \ssm \kappa$ that is homotopic to $\gamma$ in $M \ssm \kappa$. Since $\gamma$ is not a branch cycle, \cite[Proposition 6.7]{LMT21} states that there is a homotopy from $\gamma$ to $\omega$ in $\mr M$ through curves that are transverse to $\tau$.

If $p$ is a completion point of $\mc P$, then $\gamma$ is a branch cycle at the associated cusp. In this case, the maximal rectangles along $\wt \gamma$ all have $p$ in either their vertical or horizontal boundary because the sequence is $g$ invariant. Assume $p$ is in their vertical boundary. Then there is a unique unstable half-leaf at $p$ that meets the interior of each such rectangle (see \Cref{fig:branch-cycle}). The corresponding unstable half-leaf in $M$ intersects $\partial \mr M$ in an unstable prong curve $c$ in the associated downward ladder of $\tau$,
and we set $\Psi(\gamma) = c$. In particular, $c$ and $\gamma$ traverse the same sequence of tetrahedra and are therefore homotopic through transverse curves. This completes the proof.
\end{proof}

\begin{figure}[h]
    \centering
    \labellist
    \pinlabel $p$ at -30 650
    \pinlabel $l$ at 900 650
    \endlabellist
    \includegraphics[width=2in]{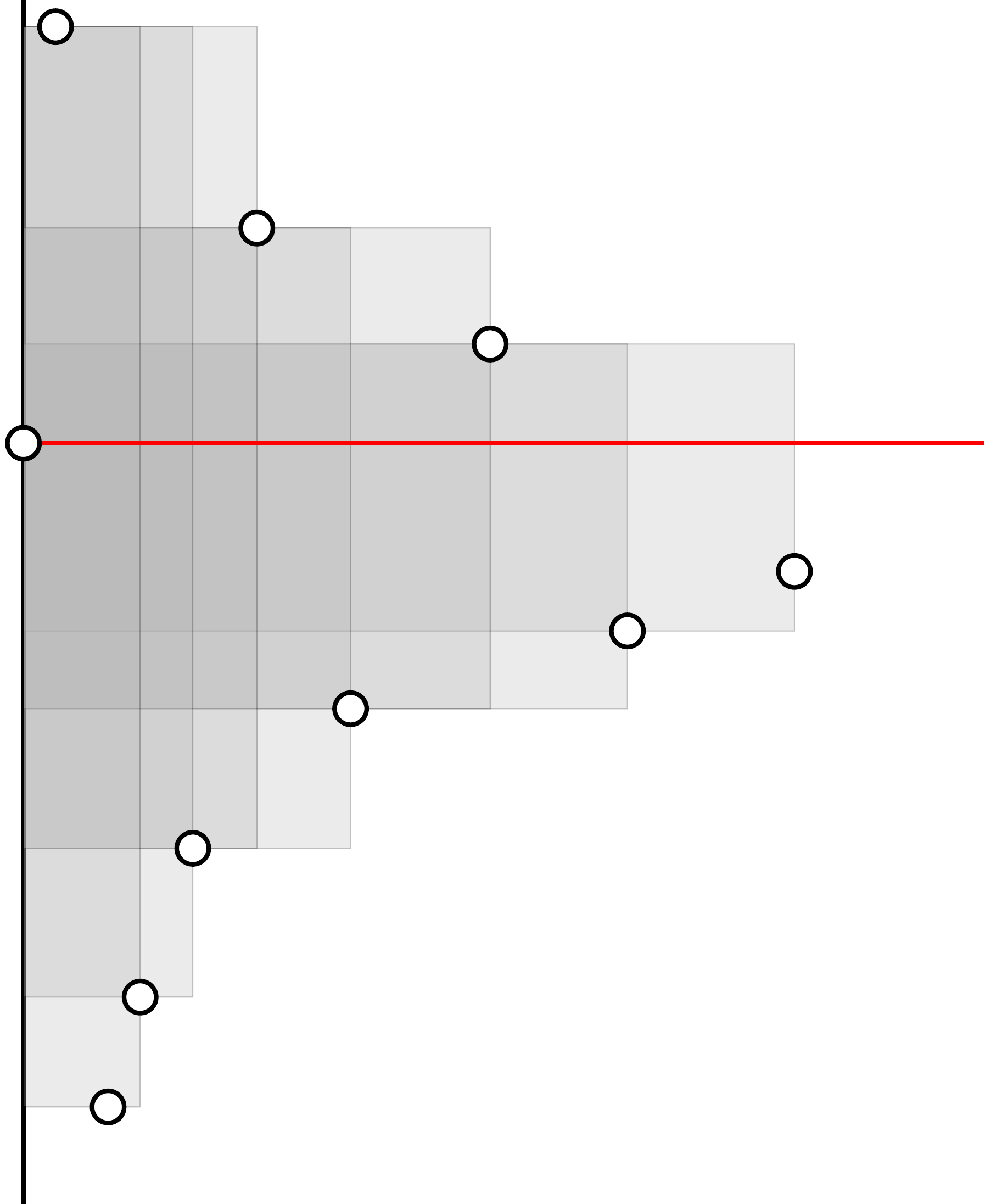}
    \caption{A sequence of tetrahedra rectangles that corresponds to a branch cycle.}
    \label{fig:branch-cycle}
\end{figure}

As in \Cref{sec:ts}, we put $S$ in efficient position once and for all. Since $S$ is relatively carried, $\Gamma$ intersects $S$ transversely in the interior of its edges. Let $\Gamma\cut S$ be the subgraph of $\Gamma$ obtained by removing the edges that intersect $S$. Then $\Gamma\cut S$ is embedded in $\mr N$, and the sectors of the core branched surface $B$ constructed in \Cref{sec:branched} are in one-to-one correspondence with edges in $\Gamma \cut S$ since each of these is in natural correspondence with the faces of $\tau^{(2)}$ that are not traversed by $\mr S$. Observe that $\Gamma \cut S \subset M \cut S$ and so $\eta$ can be pulled back to a class $i^*\eta \in H^1(\Gamma\cut S)$ via the inclusion $i:\Gamma\cut S\to M \cut S$.

\begin{corollary} \label{lem:pullback}
Each directed cycle of $\Gamma \cut S$ is homotopic \emph{in} $M \cut S$ to a closed orbit of $\varphi$, and each closed orbit of $\varphi$ in $M \cut S$ is homotopic \emph{in} $M \cut S$ to a directed cycle of $\Gamma \cut S$.

In particular, the class $i^*\eta$ is nonnegative on directed cycles of $\Gamma \cut S$. 
\end{corollary}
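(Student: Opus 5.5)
The plan is to deduce both directions from \Cref{prop:Phi}, applied to the veering triangulation $\tau$ of $M\ssm\kappa$, and to keep every homotopy away from $S$. Two facts about the carried position drive the argument. First, $S$ is relatively carried by $\tau^{(2)}$ and the homotopies of \Cref{prop:Phi} pass through curves positively transverse to $\tau^{(2)}$. Second, a curve positively transverse to $\tau^{(2)}$ crosses $\mr S$ exactly when its associated dual path uses an edge of $\Gamma$ meeting $S$.

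\textbf{First direction.} Let $\gamma$ be a directed cycle of $\Gamma\cut S$. It avoids $S$, and as a cycle of $\Gamma$ it has an image $\Psi(\gamma)$, which is either a closed orbit or a prong curve on $\partial\mr M$. By \Cref{prop:Phi}, $\gamma$ is homotopic to $\Psi(\gamma)$ through curves positively transverse to $\tau$, and every curve in this homotopy traverses the same sequence of tetrahedra as $\gamma$. Hence no curve in the homotopy crosses a face traversed by $\mr S$.

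To make this precise, I would choose the transverse representatives to lie in $W$ near the dual edges. A positively transverse curve passing through a tetrahedron from a bottom face to a top face meets $\mr S$ only if it crosses a face of $\tau^{(2)}$ that $\mr S$ carries. So if the face sequence avoids the faces carried by $\mr S$, the curve misses $\mr S$. Inside the tubes $U_\omega$ the homotopy stays on $\partial U$, or in a collar of it, and there one must check that the prong curve avoids ladderpole annuli and meridian disks.

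This is the main obstacle, and I would handle it separately for branch cycles. If $\gamma$ is a branch cycle in a ladder $L$ of $\partial U_\omega$, then $\gamma$ avoids $S$, so $S\cap\partial U_\omega$ does not cross the faces of that ladder. This rules out a meridian tube, since a meridian disk boundary crosses every ladder. In a ladderpole tube, it means the ladderpole annuli have boundary on ladderpoles other than the two bounding $L$, or at least do not separate $L$ from its prong curve. One then pushes the prong curve $\Psi(\gamma)$ slightly inward, or uses the singular orbit $\omega$ itself, in $U_\omega\ssm S$. When $U_\omega$ is a disjoint or ladderpole tube, $\omega$ lies in $M\cut S$, and the prong curve is homotopic there to a power or multiple of the closed orbit $\omega$ (or of a blowup annulus boundary orbit). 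This yields a closed orbit of $\varphi$ in $M\cut S$, possibly after passing to an appropriate power, as $\Psi$ allows.

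\textbf{Converse.} Let $\omega$ be a closed orbit in $M\cut S$. If $\omega\not\subset\kappa$, surjectivity of $\Psi$ gives a dual cycle $\gamma$ with $\Psi(\gamma)=\omega$. Because $\omega$ is transverse to $\tau^{(2)}$ after isotopy and disjoint from $S$, the dual cycle it determines uses no edge of $\Gamma$ meeting $S$. Thus $\gamma\subset\Gamma\cut S$, and the transverse homotopy again avoids $S$.

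If $\omega\subset\kappa$, it is a core orbit of a disjoint or ladderpole tube, or a boundary orbit of a blowup complex. Then $\omega$ is homotopic in $U_\omega\ssm S$ to a stable or unstable prong curve lying in a ladder untouched by $S$. In the ladderpole case, efficient position guarantees such a ladder exists between consecutive annuli. That prong curve is $\Psi$ of a branch cycle contained in $\Gamma\cut S$.

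\textbf{Positivity of $i^*\eta$.} The final statement follows immediately. Evaluating $i^*\eta$ on a directed cycle gives $\eta$ of a homotopic closed orbit in $M\cut S$, which is $\geq 0$ by assumption.
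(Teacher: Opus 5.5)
Your overall structure matches the paper's: you split into branch and non-branch cycles, apply \Cref{prop:Phi}, and handle branch cycles inside the tube. Your remark that meridian tubes contribute no branch cycles to $\Gamma\cut S$ is correct, and the paper leaves it implicit. The gap is the branch-cycle case at a ladderpole tube, which you flag as the main obstacle but do not resolve.

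First, the hypothesis $\gamma\subset\Gamma\cut S$ gives no information there. The branch cycle of a ladder $L$ crosses the faces that meet $\partial U_\omega$ in rungs of $L$. Since $\partial\mr S\cap\partial U_\omega$ consists of ladderpole curves, $\mr S$ has weight zero on every such face. Thus every branch cycle of a ladderpole tube lies in $\Gamma\cut S$, including those of ladders bounded by a ladderpole that carries $\partial S$. So your inference that the ladderpole annuli avoid the two ladderpoles bounding $L$ is false.

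Second, for a ladderpole tube it is not true that $\omega$ lies in $M\cut S$. With the minimal blowup, $S$ meets every blowup annulus in its core, so the blowup complex is cut by $S$. What lies in $M\cut S$ are its closed orbits, spread over different components of $U_\omega\cut S$. Pushing $c=\Psi(\gamma)$ ``slightly inward'' reaches none of them, so you still have to show that $c$ is joined to some closed orbit in the complement of $S$.

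The missing input, which is what the paper uses, is that $S\cap U_\omega$ crosses the blowup annuli but never the prong annuli. The prong curve $c$ is where a stable or unstable half-leaf based at a closed orbit $\omega_0$ of the blowup complex exits $U_\omega$. This half-leaf meets $U_\omega$ in an annulus $A$ from $\omega_0$ to $c$. Because $S$ is in efficient position (no ladderpole annulus has boundary on adjacent ladderpoles) and the blowup is minimal, $S\cap U_\omega$ meets a transverse meridian disk in a coherent arc system dual to the blowup tree. Its arcs therefore cross only tree edges, not the prongs joining tree vertices to the boundary. Hence $A\cap S=\emptyset$, and $A$ is the required homotopy in $M\cut S$ from $c$, and so from $\gamma$, to $\omega_0$.

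Your converse for orbits in $\kappa$ has the same hole. Knowing that some ladder lies between consecutive annuli of $S$ on $\partial U_\omega$ does not connect the given orbit $\omega_0$ to that ladder's prong curve. Instead, take a prong annulus based at $\omega_0$ itself, which misses $S$ by the same fact, and use the branch cycle of the ladder containing its prong curve.
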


\begin{proof} As before, we set $N = M \cut S$. 

Let $\gamma$ be a directed cycle of $\Gamma \cut S$. First, assume that as a directed cycle of $\Gamma$, $\gamma$ is not a branch cycle. Then by \Cref{prop:Phi}, $\omega = \Psi(\gamma)$ is a closed orbit of $\varphi$ in $M \ssm \kappa$ that is homotopic to $\gamma$ in $M \ssm \kappa$ through curves that are positively transverse to $\tau$. Since $S$ is carried by $\tau$, this homotopy is disjoint from $S$ and therefore it is a homotopy within $N$. 

Next assume that $\gamma$ is a branch cycle associated to a component $T$ of $\partial \mr M$. Again by \Cref{prop:Phi}, $\Psi(\gamma)$ is a prong curve $c$, and $\gamma$ and $c$ are also homotopic through curves that are positively transverse to $\tau$. Let $U_\omega$ be the tube in $U$ with $\partial U_\omega = T$ where $\omega$ is a component of $\kappa$. By definition there is a half-leaf $\ell$ of $\phi$ based at a closed orbit $\omega_0\subseteq\omega$ such that $\ell \cap U$ contains an annulus $A$ from $\omega_0$ to $c$. Note that since $S$ does not meet $T = \partial U$ along adjacent ladderpole curve in efficient position, $S$ crosses the blowup annuli of $U$ but not the annuli between closed orbits and prong curves. In particular, $S$ does not cross $A$. Therefore, the prong curve $c$, and hence dual cycle $\gamma$, is homotopic to $\omega_0$ through curves that are disjoint from $S$. This shows that there is a homotopy from $\gamma$ to the closed orbit $\omega_0$ within $N$, completing the proof of the first statement.

The second statement is essentially the same. Let $\omega_0$ be a closed orbit in $N$. If $\omega_0$ is not in $\kappa$, then it crosses a sequence of faces of $\tau$ and therefore determines a dual cycle $\gamma$ that is not a branch cycle. Hence, \Cref{prop:Phi} gives that $\Psi(\gamma) = \omega_0$ and we conclude as above. If $\omega_0$ is an orbit in a component $\omega$ of $\kappa$, then just as above we let $U_\omega$ be the tube containing $\omega$ and consider the prong curve $c$ in $\partial U_\omega$ that cobounds a (non-blowup) annulus in $U_\omega$ with $\omega_0$ that itself is contained a stable/unstable leaf through $\omega$ and is disjoint from $S$. As before, this is possible because $S$ is in efficient position. The prong curve $c$, just like the orbit in the case above, determines a dual cycle $\gamma$ with $\Psi(\gamma) = c$ and we proceed as before.
\qedhere

\end{proof}

With these tools at hand, we can prove the main result of this section. Recall that by \Cref{rmk:adjust}, $B \subset \mr M_{\mr S}$.

\begin{proposition} \label{lem:surface-represents-class}
When $\eta \in H^1(M\cut S)$ is nonnegative, there is a surface $\mr \Sigma$ carried by $B$, and hence properly embedded in $\mr M \cut \mr S$, such that for every $1$-cycle $z$ in $\mr M_{\mr S}$, $\langle \mr\Sigma, z \rangle_{\mr M_{\mr S}} = \langle \eta, z\rangle_{M\cut S}$. In other words, $\mr \Sigma$ represents the pullback of $\eta$ to $H^1(\mr M_{\mr S})$.
\end{proposition}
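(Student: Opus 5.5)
The plan follows the standard argument that a nonnegative cohomology class on a transversely oriented branched surface yields nonnegative integer weights satisfying the branch equations (compare Mosher, Landry, and Landry--Minsky--Taylor). Three steps are needed: express $\eta$ as a cocycle on the dual graph $\Gamma \cut S$, turn that cocycle into a nonnegative weight system on $B$, and check that the carried surface represents the pullback of $\eta$.

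\textbf{Step 1: a cocycle on $\Gamma\cut S$.} Sectors of $B$ correspond bijectively to edges of $\Gamma \cut S$, and each branch equation of $B$ (at an edge of $\tau^{(2)}$ not traversed by $\mr S$, in the interior of $B$) reads: the sum of weights on incoming sectors equals the sum on outgoing sectors. This is a Kirchhoff condition at the corresponding vertex of $\Gamma\cut S$, or rather at a tetrahedron. So an assignment of weights to sectors satisfying the branch equations is a $1$-chain on $\Gamma\cut S$; dually, I will regard weights as values of a $1$-cochain on edges of $\Gamma\cut S$. Concretely, I will represent $i^*\eta$ by a cellular cocycle $w$ on edges of $\Gamma \cut S$, via a cellular representative of $\eta$ on a $2$-complex in $\mr M_{\mr S}$ containing $\Gamma\cut S$ together with the faces of $B$. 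The aim is to choose $w$ with $w(e)\ge 0$ for every edge $e$.

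\textbf{Step 2: nonnegative weights (the main obstacle).} By \Cref{lem:pullback}, $i^*\eta$ is nonnegative on every directed cycle of $\Gamma\cut S$. A standard potential argument on directed graphs then applies: a cohomology class on a finite directed graph that is nonnegative on all directed cycles is represented by a cocycle nonnegative on every edge. To build it, fix any representative $w_0$ and define on vertices
\[
f(v)=\min\{w_0(\text{directed path from } u \text{ to } v)\},
\]
minimizing over directed paths ending at $v$, starting from suitably chosen sources. Nonnegativity on cycles ensures there are no negative directed loops, so $f$ is finite; then $w=w_0+\delta f$ satisfies $w\ge 0$. Vertices not reachable need separate handling: either take $f=0$ on acyclic parts, or work componentwise on the condensation DAG. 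This existence step is where I expect the difficulty. The branch equations must hold both at interior branch edges and at the boundary edges where $B$ is smoothed into $\mr S^\pm$ or was pushed up to $\mr S^+$. In the latter case the equation is only one-sided, so these are free boundary conditions and the relevant graph condition is just a cocycle condition on $\Gamma\cut S$ relative to its ends on $S^\pm$. I would verify that the ``cut'' edges of $\Gamma$ (those crossing $S$) impose no constraint, which is why we only need the cocycle on $\Gamma\cut S$. Cocycle on edges versus weights on sectors is the duality: the value of $w$ on the edge crossing a face is the weight of that sector. The branch equation at a tetrahedron edge then amounts to $w$ being closed on the boundary of the corresponding $2$-cell, namely a loop in $\Gamma$ around that edge of $\tau$, on which $\eta$ vanishes because the loop bounds a disk in $\mr M_{\mr S}$ meeting $B$ only there.

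\textbf{Step 3: representing the class.} Given integral nonnegative weights satisfying the branch equations, $B$ carries a properly embedded surface $\mr\Sigma$ in $\mr N$, contained in $\mr M_{\mr S}$ by \Cref{rmk:adjust}. Its algebraic intersection with any edge of $\Gamma\cut S$ is the weight $w(e)$, since $B$ is positively transverse to the dual edges. Every $1$-cycle in $\mr M_{\mr S}$ is homologous to a cycle in the dual $1$-skeleton of the cell structure. Hence $\langle\mr\Sigma,z\rangle = \langle w,z\rangle = \langle\eta,z\rangle$, provided $\Gamma\cut S$ together with arcs in the $I$-bundle pieces and near $\partial U$ carries $H_1(\mr M_{\mr S})$. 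These remaining generators are disjoint from $B$; I would check that $\eta$ vanishes on them or adjust the representative of $\eta$ there.
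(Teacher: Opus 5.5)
There is a genuine gap in Step 3. Once the weights are in place, the only remaining content of the proposition is that $\Gamma\cut S\to\mr M_{\mr S}$ is surjective on $H_1$. You leave exactly this open. You allow for extra generators (arcs in the $I$-bundle pieces, curves near $\partial U$) on which you would ``check that $\eta$ vanishes or adjust the representative of $\eta$.'' Neither fallback works. The pairing $\langle\eta,z\rangle$ depends only on the class $\eta$, so changing representatives changes nothing. And if some class $z$ did not come from $H_1(\Gamma\cut S)$, knowing only that $z$ misses $B$ would make $\langle\mr\Sigma,z\rangle=\langle\eta,z\rangle$ simply false whenever $\eta(z)\neq 0$.

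What you need is that no extra generators exist, and this is how the paper closes the argument. Let $\tau_{\mr S}\subset\tau^{(2)}$ be the union of sectors traversed by $\mr S$. Then $\mr M\cut\tau_{\mr S}$ is a union of truncated tetrahedra glued along the faces not in $\tau_{\mr S}$, and its dual graph is exactly $\Gamma\cut S$. A loop in it can be pushed off the edges, made transverse to the glued faces, and homotoped inside each tetrahedron (a ball) through its dual vertex. Hence $\Gamma\cut S\to\mr M\cut\tau_{\mr S}$ is $\pi_1$-surjective on each component. Finally, $\mr M\cut\tau_{\mr S}$ is obtained from $\mr M_{\mr S}$ by collapsing its vertical boundary along $I$-fibers. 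So the two have homeomorphic interiors, and $\Gamma\cut S\to\mr M_{\mr S}$ is $H_1$-surjective. With this, your assertion that every $1$-cycle in $\mr M_{\mr S}$ is homologous to a cycle in the dual $1$-skeleton holds with no additional generators, and your argument closes.

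The rest of your proposal is sound, and Step 2 is in one respect simpler than the paper's route. Since $\eta$ is integral, your potential argument gives an integral nonnegative cocycle representing $i^*\eta$ directly. Reachability is a non-issue if you let $f(v)$ be the minimum of $0$ and the $w_0$-weights of all directed paths ending at $v$. This is finite because directed cycles have nonnegative weight, and then $w(e)=w_0(e)+f(u)-f(v)\ge 0$ for each edge $e$ from $u$ to $v$. The paper instead uses the rational version of \cite[Lemma 5.10]{LMT20}, scales by $n$, and then needs Thurston's lemma to split the carried surface into $n$ parallel copies. That rational route is what it reuses later for positive classes, where an integral strictly positive representative need not exist.

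Two small corrections. First, the branch equation at an edge $e$ of $\tau^{(2)}$ is not a Kirchhoff condition at a vertex of $\Gamma$. As you say later, it is the cocycle condition on the loop in $\Gamma\cut S$ formed by the two directed paths of dual edges on either side of $e$. Second, edges along which $B$ meets $\partial\mr N$ (smoothed into $\mr S^\pm$ or pushed up to $\mr S^+$) impose no equation at all, so there is nothing to check there.
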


\begin{proof}
Recall that we have an embedding $i \colon \Gamma \cut S \to N = M\cut S$. Since $i^*\eta$ is nonnegative on the directed cycles of $\Gamma \cut S$ by \Cref{lem:pullback},
\cite[Lemma 5.10]{LMT20} implies that $i^*\eta$ is represented by a nonnegative rational cocycle $c'$ on $\Gamma \cut S$. (The statement in \cite{LMT20} is for real coefficients, but the results are also valid for rational coefficients.) We choose a positive integer $n$ so that $c = n \cdot c'$ is an integer cocycle.

The cocycle $c$ determines a weight system on $B$ that satisfies the matching equations for each edge in the branching locus. That is, the sum of the weights on one side of an edge matches the sum of weights on the other side. This is a consequence of the fact that the edges in the branching locus of $B$ correspond to edges of $\tau^{(2)}$ that are \emph{not} traversed by $\mr S$ and that $\eta$ is a class in $H^1(M \cut S)$. Indeed, if we take a small oriented closed (nulhomotopic) loop $l$ around an edge $e$ in the branching locus of $B$, it is disjoint from $S$ and hence $\eta(l) = 0$. Since the faces on each side of $e$ are consistently cooriented, this implies that the sum of the $c$--weights on one side of $e$ equals the sum of the $c$--weights on the other.

We conclude that the weight system induced by $c$ determines a properly embedded surface that is carried by $B$ (see \cite[Section 1]{oertel1986homology}). 
We denote this properly embedded surface by $\mr \Sigma \subset \mr N$ and realize it to be transverse to the fibers of $W$.

Note that if $c'$ is positive, then $\mr \Sigma$ is fully carried by $B$; that is, it traverses every branch. We next show that $\mr \Sigma$ represents the pullback of $n \cdot \eta$ to $H^1(\mr M_{\mr S})$. For this, first observe that the equation in the proposition statement holds for any $1$-cycle in $\Gamma \cut S$ by construction, so it suffices to know that the inclusion $\Gamma \cut S \to \mr M_{\mr S}$ is surjective on $H_1$.

To this end, let $\tau_{\mr S} \subset \mr M$ be the subbranched surface of $\tau^{(2)}$ that is a union of sectors traversed by $\mr S$. Then $\mr M \cut \tau_{\mr S}$ is a manifold with a (truncated) ideal triangulation inherited from $\tau$. By construction, the dual directed graph of $\mr M \cut \tau_{\mr S}$ is exactly $\Gamma \cut S$ and so the inclusion $\Gamma \cut S \to \mr M \cut \tau_{\mr S}$ is $\pi_1$--surjective over every component of $\mr M \cut \tau_{\mr S}$. 

Finally, note that $\mr M \cut \tau_{\mr S}$ can be obtained from $\mr M_{\mr S}$ by collapsing its vertical boundary along $I$-fibers. In particular, they have homeomorphic interiors. From this, we conclude that  $\Gamma \cut S \to \mr M_{\mr S}$ is surjective on $H_1$.

To complete the proof, it only remains to apply a lemma of Thurston (\cite[Lemma 1]{thurston1986norm}) to know that $\mr \Sigma$ is a disjoint union of $n$ homologous surfaces. After replacing $\mr \Sigma$ with one of these surfaces, it represents the pullback of $\eta$ to $\mr M_{\mr S}$, as required.
\end{proof}

\begin{remark}
The proof additionally shows that when $\eta$ is positive, we can take $\mr \Sigma$ to be fully carried by $B$ and represent \emph{a multiple of} $\eta$. In particular, $\mr \Sigma$ crosses each edge of $\Gamma \cut S$. Indeed, by \cite[Lemma 5.10]{LMT20}, when $\eta$ is positive,  we may take the cocycle $c'$ to be positive on every edge, so corresponding carried surface $\mr\Sigma$ is fully carried by $B$. However, the surface representing the pullback of $\eta$ from Thurston's lemma might fail to be fully carried.
\end{remark}

\section{Transverse surfaces in $M \cut S$ and proofs of the main theorems}
\label{sec:transverse}
\label{sec:fill}

\medskip
The surface $\mr\Sigma$ we built in \Cref{sec:nonneg-class} is properly embedded in $\mr N$. In this section, we describe a procedure to add back in the components of $U \cut S$ to $\mr N$ to reproduce $N = M \cut S$ in a way so that $\mr \Sigma$ extends to a properly embedded almost transverse surface $\Sigma$ in $N$ representing $\eta$. The method for this is a generalization of \cite[Theorem 4.1]{landry2025transverse} and we refer the reader there for additional details whenever possible.

Recall from \Cref{sec:branched} that given a transverse $S$, the components of $U$ are divided into disjoint, meridian, or ladderpole tubes, depending on their intersections with $S$. In Steps $1$--$3$, we fill in the three types of components of $U \cut S$ and attempt to extend $\mr \Sigma$. However, it is not always possible to extend $\mr \Sigma$ locally while filling in these components, so after these steps we are not guaranteed to have a properly embedded surface in $N$. We will deal with the relative boundary components produced in previous steps (Steps 2 and 3) simultaneously in Steps 4.

\subsection*{Step 1}\label{step1}
We first fill in the disjoint tubes in $\mr N$. To do this, we exactly follow the same process as in the closed surface case (\cite[Section 4]{landry2025transverse}) to blow up $\phi$ inside the filled-in tubes so that $\mr \Sigma$ can be extended to a transverse surface $\mr \Sigma_1$ properly embedded in the resulting manifold. To save notations, we continue to call the blowup flow $\phi$.

\subsection*{Step 2}
Next, we fill in the components of $U\cut S$ coming from meridian tubes. Let $U_\omega$ be a meridian tube. Then $\partial U_\omega\cut \mr S$ is a collection of annuli $A_1,\cdots,A_n$. Recall from \Cref{sec:cut} that we have chosen $A_i$ to be flow parallel.  We will fill in $D^2\times I$ to each $A_i$ while modifying and extending $\mr\Sigma_1$.

The next claim is implicit in \cite{landry2019stable}, but we sketch a proof for completeness.
Recall that $\tau_\omega$ is the train track on $\partial U_\omega$ given by $\tau^{(2)}\cap\partial U_\omega$.

\begin{claim}\label{claim:meridian-bounds}
Suppose $m$ is a meridian on $\partial U_\omega$ carried by $\tau_\omega$, then $m$ bounds an embedded transverse disk in $U$.
\end{claim}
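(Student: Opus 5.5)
The plan is to build the disk from the local model of the flow near $\omega$ inside $U_\omega$, following the closed-surface case of \cite{landry2019stable, landry2025transverse}. Since $U_\omega$ is a meridian tube, $S$ meets it in meridian disks transverse to $\phi$. It follows that $\omega$ is a closed orbit whose neighborhood is foliated by transverse meridian disks, and no blowup annulus needs to be crossed. The transverse disks are then obtained by taking a small disk $D_0$ transverse to $\omega$ at a point, so that $D_0 \cap \FF^{s/u}$ is the standard prong picture, and enlarging it until its boundary lies on $\partial U_\omega$.

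The first step is to understand which meridians are carried by $\tau_\omega$. Because $\omega$ meets a meridian disk transversely, a meridian crosses each ladderpole curve. Since $m$ is carried, it is positively transverse to the coorientation of $\tau_\omega$, which is induced by the flow. The flow on $\partial U_\omega$ can be isotoped to be transverse to $\tau_\omega$, with the stable and unstable prong curves as its closed orbits, each lying in its ladder. A carried meridian $m$ is therefore a closed curve on $\partial U_\omega$ transverse to this induced flow, and it is homotopic, through curves transverse to the flow, to the boundary $\partial D_0'$ of a standard transverse meridian disk $D_0'$. The homotopy is constructed ladder by ladder, using the train track structure inside each ladder, where branches spiral monotonically in the flow direction.

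The second step is to convert this into an embedded disk. The disk $D_0'$ is flowed radially outward to $\partial U_\omega$. The curve $m$ is then realized as the graph of a function over $\partial D_0'$ in flow-time coordinates on a collar $\partial U_\omega \times [0,\epsilon]$, and the collar annulus is interpolated to produce $D_m = D_0' \cup (\text{graph annulus})$. Because $m$ is embedded and transverse to the flow in the collar, the annulus given by the straight-line homotopy in time coordinates is embedded and transverse, so $D_m$ is an embedded transverse disk.

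The main obstacle is the first step. One must show that being carried by $\tau_\omega$ forces $m$ to be isotopic on the torus to a curve that is a section of the flow direction near $\omega$, and that this isotopy can be taken through transverse curves. The natural approach is to use the fact that, in each ladder, carried routes must cross from one ladderpole to the next, combined with the alternating upward and downward ladder structure of \cite[Lemma 3.2]{landry2025transverse}. I would first check this in a single-ladder-pair model, then glue the pieces around the torus.
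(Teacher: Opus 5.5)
There is a genuine gap, and it is the step you yourself flag as the main obstacle. You assert, but do not prove, the passage from ``$m$ is carried by $\tau_\omega$'' to ``$m$ bounds a transverse disk.'' The mechanism you sketch for it also does not control the right transversality condition.

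The flow $\phi$ is not tangent to $\partial U_\omega$: it crosses this torus, entering near the stable prong curves and exiting near the unstable ones. The prong curves are intersections of $\partial U_\omega$ with $\FF^{s/u}(\omega)$, not orbits. So there is no ``flow on $\partial U_\omega$'' with the prong curves as closed orbits unless you pass to a radial blowup along $\omega$, which you do not set up.

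Even granting such an induced torus flow, and a homotopy from $m$ to $\partial D_0'$ through curves transverse to it, your second step does not follow. Write a collar surface as $t=F(s,\theta)$, with $s$ the collar coordinate, interpolating between $t=F_0(\theta)$ and $t=F_1(\theta)$. Transversality to $\phi$ requires $\dot t-F_\theta\dot\theta-F_s\dot s$ to have constant sign. Transversality of the two boundary curves to a torus flow, plus linear interpolation, only controls the first two terms. In a collar of width $\epsilon$ the last term is of order $|F_1-F_0|\,|\dot s|/\epsilon$, and $\dot s$ takes both signs. Similarly, flow-time coordinates for the genuine flow describe $m$ as a graph over $\partial D_0'$ only if $m$ already lies in a thin flow box over $\partial D_0'$, which is essentially what has to be shown.

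The idea that closes this, and the one the paper uses, is combinatorial and needs no model of the flow near $\omega$. The complementary regions of $\tau_\omega$ are bigons (the truncated tips, with two cusps and one smooth vertex), and $\tau^{(2)}\cap\overline{U_\omega}$ is a product $\tau_\omega\times[0,\infty)$. The argument then runs as follows:
\begin{enumerate}
\item Start with any transverse meridian disk $D$ of $U_\omega$, and make $\partial D$ carried by pushing it across the bigons it crosses.
\item Any other carried curve $m$ isotopic to $\partial D$ differs from it by a finite sequence of pushes across bigons.
\item Each push extends to an isotopy of $D$, supported near $\partial U_\omega$, through transverse disks. Intuitively, a tip is the end of a taut tetrahedron, which the flow crosses from the bottom side of the bigon to its top side.
\end{enumerate}
This replaces both your ladder-by-ladder homotopy and the collar interpolation.

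The claim is also reused in Step 3, Case 2, for ladderpole tubes, where $\omega$ may be a blowup complex. Your reduction to a single closed orbit with a standard prong picture on a small transverse disk does not cover that case. The bigon argument only needs the existence of some transverse meridian disk.
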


\begin{proof}[Sketch]
The tube $U_\omega$ has some transverse meridian disk $D$ and by considering the complementary bigons of $\tau_\omega$ crossed by $\partial D$, we see that $\partial D$ can be isotoped to be carried by $\tau_\omega$. This isotopy can be extended to $D$, supported on a small neighborhood of $\partial U_\omega$, so that $D$ remains transverse to the flow. Now if $m$ is other curve that is carried by $\tau_\omega$ and isotopic to $\partial D$, then this isotopy can be achieved in $\partial U_\omega$ by `pushing across' the bigons of $\tau_\omega$. As before, this isotopy can be extended to $D$ so that it remains transverse. This completes the proof.
\end{proof}

If a component of $\partial \mr \Sigma_1\cap A_i$ is a meridian, it is carried by $\tau_\omega$ because $\mr\Sigma$ is carried by $\tau^{(2)}$ by construction. By \Cref{claim:meridian-bounds}, we can cap the boundary off with a transverse disk.

If a component $\alpha$ of $\partial \mr \Sigma_1\cap A_i$ is an arc $\alpha$ with both ends on 
$\mr S^+$, then $\alpha$ together with an arc in $\partial^+ A_i$ bounds a flow saturated disk $D$ in $A_i$. We modify $\mr\Sigma_1$ by adding $D$ to $\mr \Sigma_1$ and tilting $D$ slightly to make it transverse to $\phi$. A similar operation can be done when $\alpha$ has both ends on $\mr S^-$.

Now we are left with arcs in $\partial \mr \Sigma_1\cap A_i$ with one end on $\mr S^+$ and the other end on $\mr S^-$. We adjust $\mr\Sigma_1$ such that such intersection arcs are vertical fibers on $A_i$, while keeping the interior of $\mr\Sigma_1$ transverse. As indicated in our preliminary remarks at the start of this section, we will deal with these boundary components in Step 4. We call the resulting surface after this step $\mr\Sigma_2$.

\subsection*{Step 3}
Next we fill in the components of $U\cut S$ coming from ladderpole tubes. Suppose $U_\omega$ is a ladderpole tube. By assumption, the intersection of $S$ with $\partial U_\omega$ is a collection of disjoint embedded closed curves, each carried by a ladderpole slope on $\tau_\omega$, and they cut $\partial U_\omega$ into annular components.

Take any component $C$ of $U_\omega\cut S$, and assume that $\partial C$ meets $\partial \mr\Sigma_2$. For each annulus $A\subset\partial C\cap\partial U_\omega$ intersecting $\partial\mr\Sigma_2$, the intersection $A\cap\partial\mr\Sigma_2$ is either a closed curve carried by a ladderpole, or a collection of arcs connecting the two components of $\partial A$. The two cases are illustrated in \Cref{fig:l-tubes}. We claim that fixing $C$, only one of these two possibilities can happen among all $A\subseteq\partial C\cap\partial U_\omega$.

\begin{figure}[h]
    \centering
    \labellist
    \pinlabel $C$ at 140 150
    \pinlabel $C$ at 860 150
    \endlabellist
    \includegraphics[width=4in]{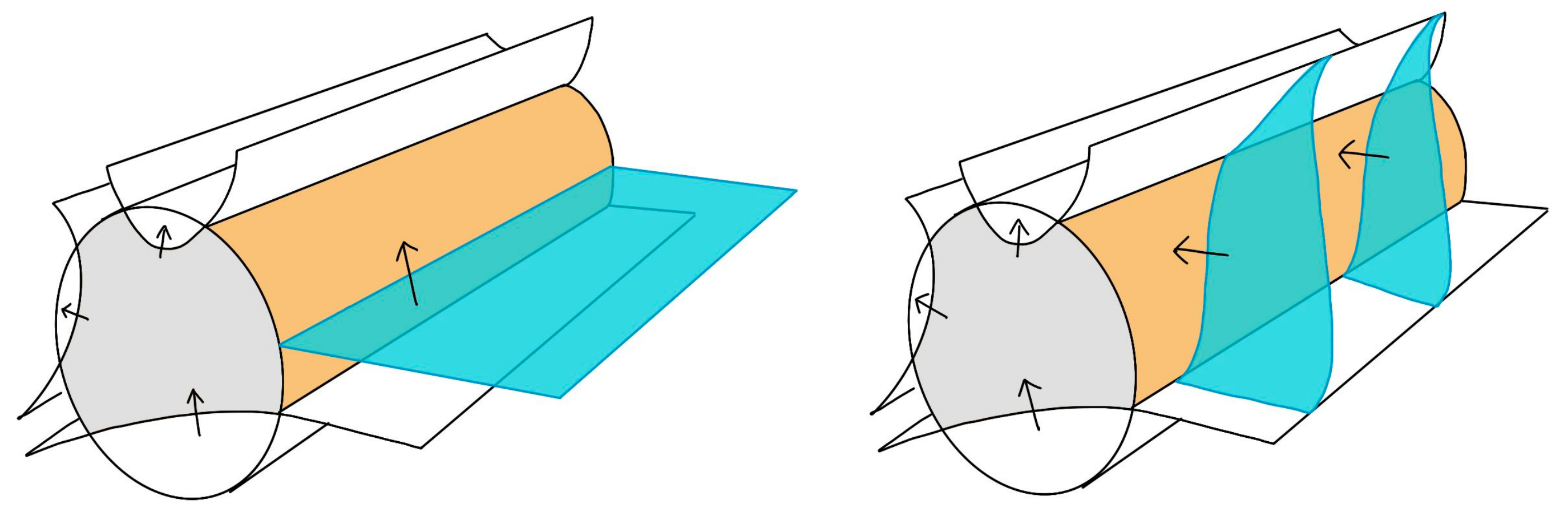}
    \caption{Two possible ways in which $\mr\Sigma_2$ (in blue) can intersect a boundary annulus $A$ (in orange) of $C$.}
    \label{fig:l-tubes}
\end{figure}

First, this is true for each individual $A$ because $\mr\Sigma_2$ is embedded. If $A\cap\partial\Sigma_2$ is a collection of arcs, then the arcs are carried by $\tau_\omega|_A$ and therefore have the same coorientation (see \Cref{fig:prongs}). In particular, we have the intersection number $\langle [\mr\Sigma],\mathrm{core}(A)\rangle_{\mr M_{\mr S}}\neq 0$.
On the other hand, if $A\cap\partial\Sigma_2$ is a collection of ladderpoles, then $\langle [\mr\Sigma],\mathrm{core}(A)\rangle_{\mr M_{\mr S}}= 0$. 
But by \Cref{lem:surface-represents-class}, we have
\[
\langle [\mr\Sigma],\mathrm{core}(A)\rangle_{\mr M_{\mr S}}= \langle\eta,\mathrm{core}(A)\rangle_N
\]
where $\langle\eta,\mathrm{core}(A)\rangle_N$ is independent of the annular component $A$ of $\partial C\cap\partial U_\omega$ because the cores of all such annuli are homotopic in $N$.
Therefore, the intersection pattern of $\mr\Sigma_2$ with any $A$ in $\partial C\cap\partial U_\omega$
is determined by $\langle\eta,\mathrm{core}(C)\rangle_N$, which only depends on $C$.

We will now fill in the components of $U_\omega\cut S$. The way we extend $\mr\Sigma_2$ across each $C$ depends on how they intersect along the boundary; the cases where they intersect in ladderpoles and in arcs are treated separately. The above analysis shows that the two different cases do not interact in $N$.

\medskip

\noindent \textit{Case 1:} As the first case, consider a component $C$ of $U_\omega\cut S$ such that $\partial\mr\Sigma_2\cap\partial C$ is a collection of ladderpoles. We resolve all such intersections by extending $\mr\Sigma_2$ across $C$. Similar to \cite{landry2025transverse}, this is done by joining the ladderpole intersections and $\partial N$
using annuli in $C$, and possibly blowing up the flow $\phi$ to preserve transversality.

\begin{figure}[h]
    \centering
    \labellist
    \pinlabel \textcolor{LimeGreen}{$A_S$} at 400 1730
    \pinlabel \textcolor{BurntOrange}{$A'$} at 1514 1620
    \pinlabel \textcolor{red}{$A''$} at 530 725
    \endlabellist
    \includegraphics[width=5in]{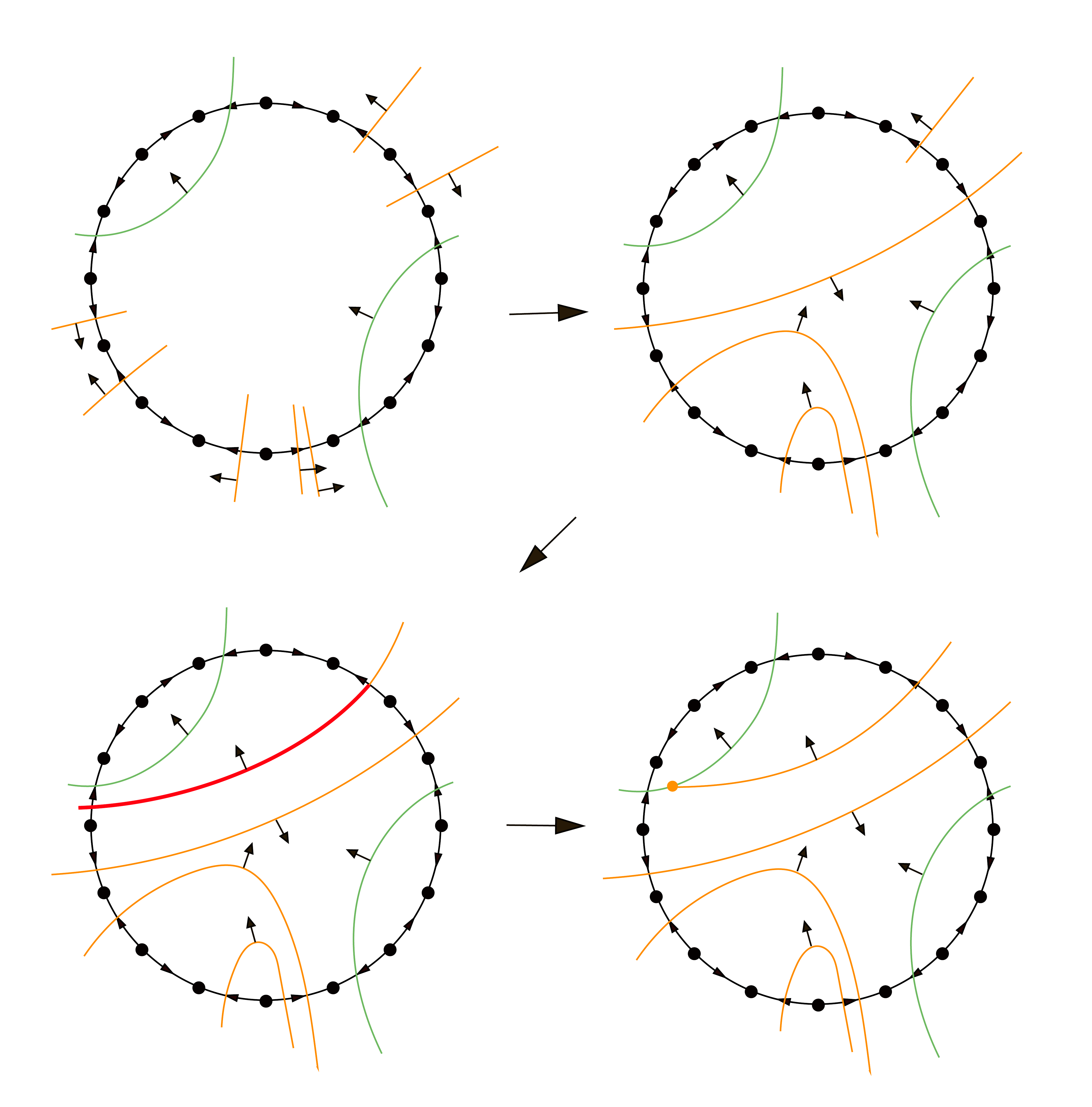}
    \caption{Steps for determining the blowup and extending $\mr\Sigma_2$}\label{fig:blow-up}
\end{figure}

We describe the blowup in greater detail (see also \Cref{fig:blow-up} for an example of the procedure). Following \cite[Section 4.1]{landry2025transverse}, pick a transverse meridian $D$ of $U_\omega$ 
 and let $f$ be the first return map. The blowup complex associated to $\omega$ intersects $D$ in a blowup tree $T$, which is preserved by $f$. We define $T^+ \subset D$ be the union of $T$ together with the stable/unstable prongs in $D$ that join the vertices of $T$ to the boundary of $D$. 
 Each vertex of $T^+$ has even degree with adjacent edges alternating between attracting and repelling under powers of $f$. The arcs in $\partial D\ssm T^+$
 are also cooriented, so that the intersection of an expanding prong of $T^+$ with $\partial D$ is a sink, and the intersection of a contracting prong of $T^+$ with $\partial D$ is a source. As in \cite[Section 4.1]{landry2025transverse}, the annuli $S\cap U_\omega$ intersect $D$ in a \emph{coherent arc system} $A_S$ on $D$; in particular, the intersection is a collection of embedded arcs that meet the edges of $T\cap D$ with compatible coorientations (see \cite[Proposition 4.3]{landry2025transverse}). In fact, the blowup tree $T$ is exactly the dual graph to the arc system $A_S$ in $D$. We now turn to build a new blowup complex in $U_\omega$ from a new coherent arc system on $D$ that sees both $S$ and $\mr\Sigma_2$. As we will see later, the associated blowup is a further blowup of $\phi$, and transverse to some extension of $\mr\Sigma_2$ in $U_\omega$.

We first describe the construction assuming $f$ fixes the prongs. The intersection $D\cap C$ is a complementary region $D_C$ of the arc system $A_S$ in $D$. Note that $\partial\mr\Sigma_2\cap \partial D_C$ is represented by a collection $P$ of compatibly cooriented points on $\partial D_C\cap\partial D$ disjoint from the sinks and sources. If there are two points in $P$ with opposite coorientations viewed from $\partial D$, we can pair them up using a compatibly cooriented arc in $D_C$. We do a sequence of such pairings, each time ensuring that the unpaired points can be connected to $A_S$ by paths away from existing pairing arcs.

After finitely many pairings, we call the set of remaining points, if any, $P'$.
Denote the resulting collection of arcs by $A'$.
Now all the points in $P'$ are in the components of $\partial D_C\ssm \partial A'$ containing $A_S$. Lastly, we can draw a coherent arc, contained in $D_C$ and disjoint from $A'$, from each point in $P'$ to a quadrant in $\partial D$ containing an endpoint of $A_S$. Call this collection of arcs $A''$. Now we have a coherent arc system $\mathcal{A}:=A_S\cup A'\cup A''$ in $D$ and so we can proceed exactly as in \cite[Section 4]{landry2025transverse}. In particular, starting with \cite[Section 4.2]{landry2025transverse} we can find a blowup tree dual to $\mathcal{A}$, which determines a blowup of the blowdown of $\phi$ at $\omega$. Moreover, since $\mathcal{A}$ contains $A_S$, we can choose the associated blowup to be a further blowup of $\phi$. Finally, by suspending the blowup tree and merging the annuli from $A''$ into $S$ (as in \cite[Section 4.3]{landry2025transverse}) we get the desired extension of $\mr \Sigma_2$ in $C$. 

\smallskip
Now we consider the case where $f$ permutes the prongs via a non-trivial rotation. There is a finite-order rotation $r$ of the meridian $D$ acting as the same permutation on the prongs as $f$, and we will use $r$ as a model of the first return map $f$. We would like to construct a cooriented arc system $\mathcal{A}$ as before, but in an $r$--invariant way. To this end, we work in the orbifold quotient $\widehat{D}=D/r$ with a single orbifold point $o$ in the center. Each annular component of $S\cap U_\omega$ gives a single arc in $\widehat{D}$, and we denote the collection of these arcs by $\widehat{A}_S$. The intersection $D_C=C\cap D$ descends to a single component $\widehat{D}_C$ of $\widehat{D}\ssm\widehat{A}_S$, and $\partial\mr\Sigma_2\cap\partial D_C$ descends to a collection $\widehat{P}$ of compatibly cooriented points on $\partial\widehat{D}_C\cap\partial\widehat{D}$. We run the same construction as before and produce a compatibly cooriented arc systems $\widehat{\mathcal{A}}=\widehat{A}_S\cup \widehat{A}' \cup \widehat{A}''$. Up to small perturbation, we can assume the arcs in $\widehat{\mathcal{A}}$ are disjoint from $o$. Then we can take the preimages of $\widehat{\mathcal{A}}$ in $D$ to get the desired $r$-invariant arc system $\mathcal{A}$, which again determines a blowup tree and a blowup of the blowdown of $\phi$. The arc system $\mathcal{A}$ can also be divided into $A_S\cup A'\cup A''$ by construction, and the partition is also $r$-invariant. To extend $\mr\Sigma_2$, we suspend $A'\cup A''$ along the flow, and merge the annuli coming from $A''$ into $S$. This completes the construction when the flow around $U_\omega$ permutes the prongs.

We run the above construction for all components $C$ of $U_\omega\cut S$ such that $\partial\mr\Sigma_2$ intersects $\partial C$ in ladderpoles, and obtain an extension of $\mr\Sigma_2$ in all such $C$ and a further blowup of $\phi$ transverse to the extended surface. Apart from Step 1, this is the only place in the construction where we need to modify the flow. To save notations, we continue to call the flow after the further blowup by $\phi$.
\medskip

Before moving to the second case, we introduce some terminology. Again let $C$ be a component of $U_\omega \cut S$. We call a component $A$ of $\partial U_\omega\cut \mr S$ \emph{thin} if both boundary components of $A$ are carried by the same ladderpole in $\tau_\omega$. Otherwise, we say $A$ is \emph{thick}. If $A$ is thin, then it is contained in the totally short boundary $\partial_\text{ts}W_I$.

\medskip

\noindent \textit{Case 2}: In the second case, suppose $C$ is a component of $U_\omega\cut S$ such that all intersections $\partial\mr\Sigma_2\cap\partial C$ are arcs. If there is only one component $A$ in $\partial C\cap\partial U_\omega$, then the two components of $\partial A$ are either both in $\mr S^+$ or both in $\mr S^-$. Every arc $\alpha\subset\partial \mr \Sigma_2 \cap A$ 
can be completed to a meridian $\mu$ carried by $\tau^{(2)}\cap A$, and $\mu$ bounds a transverse meridian disk $\Delta$ in $U_\omega$ by \Cref{claim:meridian-bounds}. Let $\Delta_0$ be the intersection $\Delta\cap C$. We extend $\mr\Sigma_2$ into $U_\omega$ by gluing $\Delta_0$ to $\mr\Sigma_2$ along $\alpha$ and do this for all $\alpha$.

If $\partial C\cap\partial U_\omega$ has more than one components, denote them by $A=A_1, A_2,\cdots, A_n$. Since $\mr \Sigma$ is constructed to be disjoint from $W_I$, we may assume $A_1,\cdots,A_m$ are thick, and $A_{m+1},\cdots, A_n$ are thin for some $m$. Suppose $\mr\Sigma_2$ intersects $A_1$ in $k$ arcs $\alpha_1,\cdots,\alpha_k$. Note that every $\alpha_i$ is carried by the train track $\tau_\omega$. Since the coorientation on the $\alpha_i$ is determined by the track $\tau_\omega$,
these arcs are all of the same coorientation determined by the flowlines,
and $\langle[\mr\Sigma],[\mathrm{core}(A_1)]\rangle_{\mr M_{\mr S}}= k$ if we orient $\mathrm{core}(A_1)$ using the flow direction. 

Let $n$ be a non-zero integer such that $[\mathrm{core}(A_1)]=n\cdot[\mathrm{core}(C)]$. Also note that the cores of all thick $A_i$ are homotopic in $M\cut S$.
By \Cref{lem:surface-represents-class}, we have 
\[
\langle[\mr\Sigma],[\mathrm{core}(A_i)]\rangle_{\mr M_{\mr S}}=
\langle \eta, \mathrm{core}(A_1) \rangle_{M \cut S}
=n\cdot \langle\eta,[\mathrm{core}(C)]\rangle_{M\cut S}
\]
for $i=1,\cdots,m$. In particular, for every thick $A_i$, $\mr\Sigma\cap A_i$ contains $|k|$ arcs with the same coorientation as $\mr\Sigma\cap A_1$ with $k$ divisible by $n$. We can cap off the arcs by attaching $|k/n|$ transverse meridian disks of $C$ to $\mr\Sigma_2$ (see \Cref{fig:meridian-extend}). Similar to the proof of \Cref{claim:meridian-bounds}, this can be done by first taking any $|k/n|$ disjoint embedded transverse meridian disks of $C$, and isotoping them near the boundary by `pushing across' the bigons of $\tau_\omega$ to match the arcs in $\mr\Sigma_2\cap A_i$.

However, since there might be some thin annuli in $\partial C$, the added meridian disks of $C$ have portions of boundary arcs in $\partial_\text{ts} W_I$ that are not yet capped off. We adjust so that the leftover boundaries on $\partial_{\text{ts}}W_I$ are finitely many fibers.

Let the resulting surface be $\mr\Sigma_3$. Also note that after this step we have already filled in all components of $U\cut S$ and get back $N$.

\begin{figure}[h]
    \centering
    \labellist
    \pinlabel $C$ at 270 280
    \endlabellist
    \includegraphics[width=4in]{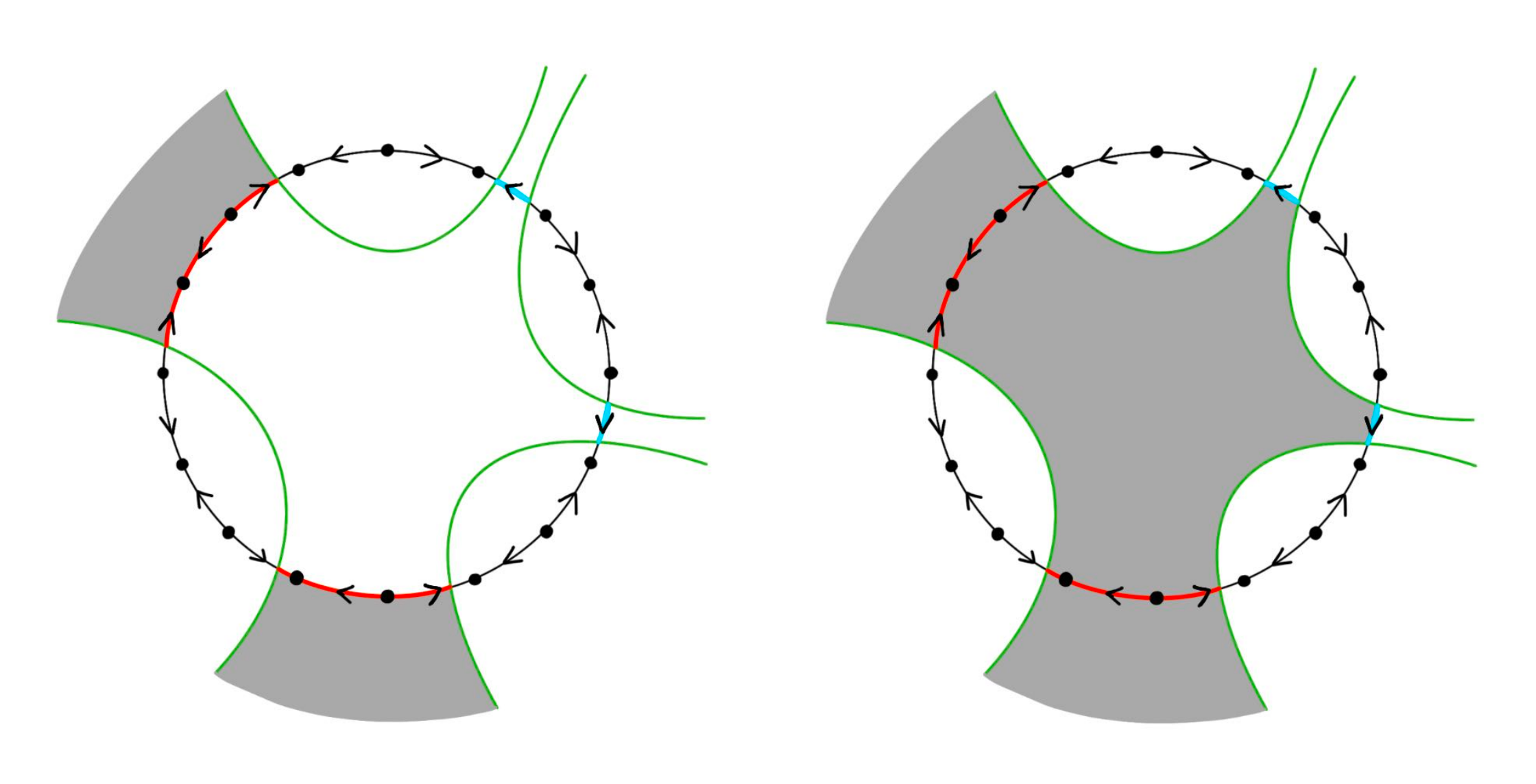}
    \caption{Extending $\mr\Sigma_2$ using a meridian disk of $C$. The result is capping off the bounday on thick annuli (red) and producing vertical boundary on thin annuli (blue).}
    \label{fig:meridian-extend}
\end{figure}

\subsection*{Step 4}
After finishing Steps 1-3, we obtained a surface $\mr\Sigma_3$ embedded in $N$
with the relative boundary $\partial\mr\Sigma_3\ssm\partial N$
coming from both Step $2$ and Step $3$. We recall that any relative boundary component coming from Step 2 is a fiber on $\partial U_\omega\cut S$ connecting $S^+$ and $S^-$, where $\partial U_\omega$ is a meridian tube.
Any relative boundary component coming from Step 3 is a fiber on a thin annulus on a ladderpole tube. In this step we will cap off the relative boundary components to obtain a properly embedded surface in $N$ representing $\eta$.

As a preparation, recall that $W_I\subset\mr N$ is an $I$-bundle fibered over $Y$. We will build a larger $I$-bundle from $W_I$ by adding in the components of meridian tubes after cutting. More precisely, if $U_\omega$ is a meridian tube and $A$ is a component of $\partial U_\omega\cut S$, recall that we made $A$ to be flow parallel by adjusting it in $N$. The effect is that each component 
of $U_\omega \cut S$ has been isotoped in $N$ to be an $I$-bundle over $D^2$ with fibers the flow lines of $\phi_N$. We attach these components to $W_I$, and the resulting manifold, denoted by $W^+_I\subset N$, is again an $I$-bundle over a surface $Y^+$, where $Y^+$ can be obtained from $Y$ by adding disks. Note that some disks are attached to existing components of $Y$ along arcs, while some are added as components.
See \Cref{fig:Y-plus}.

\begin{figure}[h]
    \centering
    \includegraphics[width=4in]{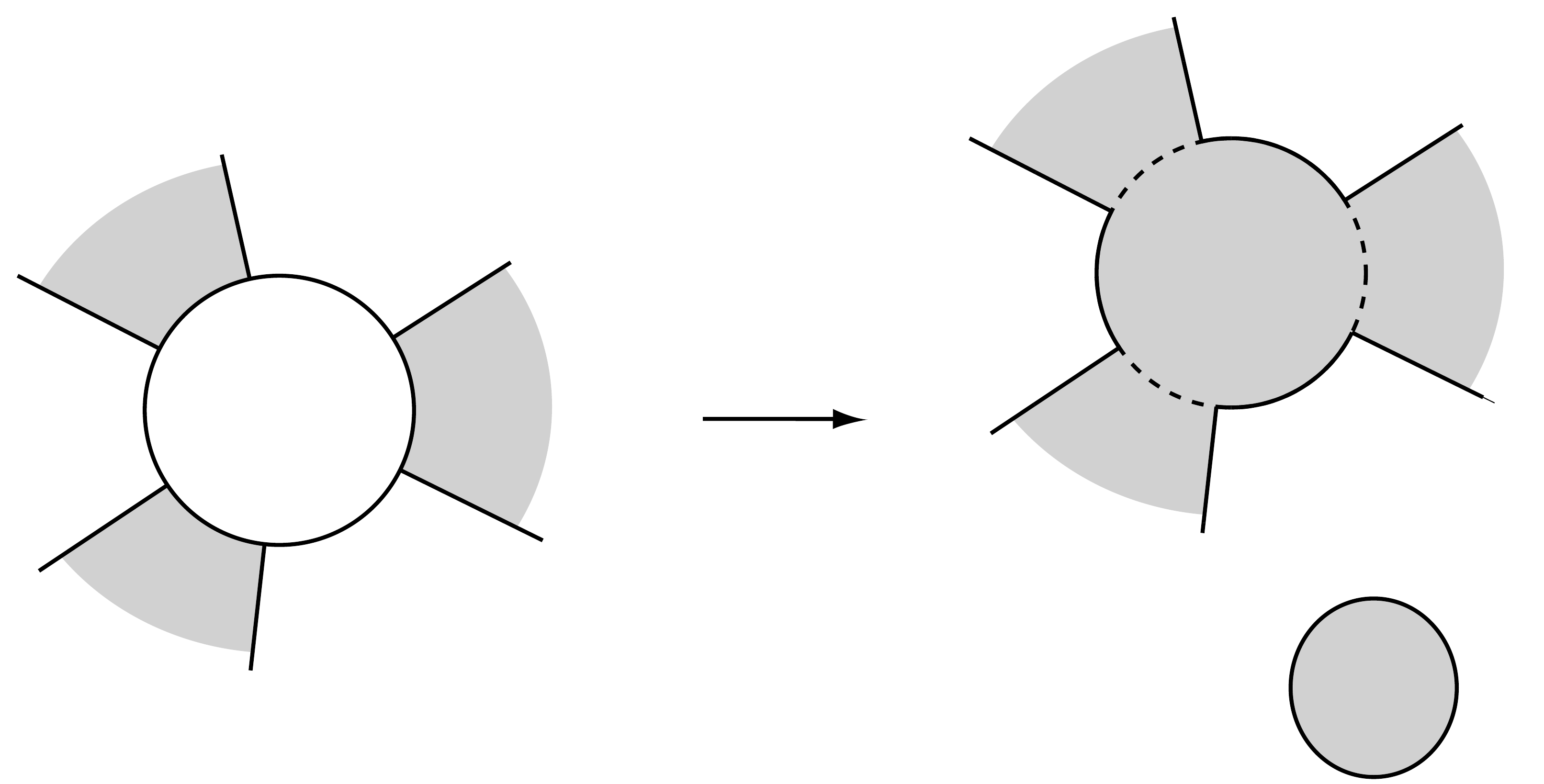}
    \caption{Adding disks to $Y$ to obtain $Y^+$}
    \label{fig:Y-plus}
\end{figure}

In this way, all the relative boundary components in $\partial\mr\Sigma_3\ssm\partial N$ are fibers on $\partial Y^+\times I$. Moreover, they inherit a natural coorientation from the transverse orientation of $\mr\Sigma_3$. The image of these relative boundary components on $\partial Y^+$ is a collection $Q$ of cooriented points. Then $Q$ gives an element $[Q]$ in $H_0(\partial Y^+)= H^1(\partial Y^+)$, where the sign of a point $q$ is positive if the coorientation of $q$ matches with the boundary orientation on $\partial Y^+$, and is negative otherwise.

After embedding $Y^+$ in $N_I$ as a cross section, we have $\partial Y^+\subseteq \mr M_{\mr S}$. By \Cref{lem:surface-represents-class} and the construction, 
$[Q]$ represents the restriction of $\eta$ on $\partial Y^+$. Therefore, we can find a cooriented arc system $\Xi$ in $Y^+$ with $\partial \Xi=Q$ and with compatible coorientations with $Q$, such that $\Xi$ represents the restriction of $\eta$ on $H^1(Y^+) \cong  H_1(Y^+,\partial Y^+)$. The preimage of $\Xi$ in $W^+_I$ is a collection $B_\Xi$ of bands fibered by flow segments. We attach $B_\Xi$ to $\mr\Sigma_3$ along the boundary fibers, and tilt $B_\Xi$ using the coorientation of $\Xi$ to make the new surface transverse. This will cap off the relative boundary $\partial\mr\Sigma_3\ssm\partial \mr N$  with annuli to obtain a properly embedded surface $\Sigma'$ transverse to the flow. By construction, the pull-backs of $[\Sigma']$ to $H^1(W_I^+)$ and $H^1(\mr M_{\mr S})$ coincide with the pull-backs of $\eta$. Finally, the Mayer-Vietoris sequence for the pair $N\cut W_I^+$ and $W_I^+$ shows that the difference between $\Sigma'$ and $\eta$ is supported on $\partial Y^+\times I$, which is a collection of flow parallel annuli. 
We can perform oriented sum to $\Sigma'$ and $\partial Y^+\times I$ to adjust the cohomology class $[\Sigma]$ to be exactly $\eta$, and further perturb the resulting surface $\Sigma$ to be transverse. This finishes Step 4.

\medskip

So far, we have constructed a properly embedded surface $\Sigma$ in $M\cut S$ representing $\eta$ and transverse to $\phi$ (after possibly replacing $\phi$ with a further blowup of itself in Case 1 of Step 3). It only remains to make sure that $\Sigma$ is essential.
By \cite[Lemma 3.6]{HT2026}, $\Sigma$ is incompressible because it is transverse to $\phi$. 
It is not necessarily the case that $\Sigma$ is boundary incompressible, but \cite[Lemma 3.7]{HT2026} states that boundary compressions can be performed while maintaining the property that $\Sigma$ is transverse to the flow. Hence, after finitely many such boundary compressions, we obtained a surface that is both essential and transverse to the flow.
\qed

\bigskip
We can now turn to the proof of \Cref{thm:class}.

\begin{proof}
Given a non-negative class $\eta$ in $H^1(M\cut S)$, \Cref{thm:properly-embedded-tst} produces a properly embedded essential surface $\Sigma$ in $M\cut S$ representing $\eta$ and a dynamic blowup of $\phi$ (which we continue to denote by $\phi$) that is transverse to both $\Sigma$ and $S$. After spinning $\Sigma$ around $S$ (see, for example, \cite[Section 3.2]{HT2026}) we obtain an infinite type surface $L$ accumulating only on $S$. The union $L\cup S$ is a depth one lamination $\mathcal{L}$ transverse to $\phi$.

To show that $\mc{L}$ can be completed to a depth one foliation when $\eta$ is positive, it suffices to show that $L$ intersects every orbit of $\phi|_S$, where $\phi|_S$ is the semi-flow in $M\cut S$ induced by $\phi$. This is because the leaf $L$ then has a well-defined first return map under $\phi$ whose mapping torus is naturally identified with $M \ssm S$. The foliations of $M \ssm S$ by fibers (parallel to $L$) plus the compact leaf $S$ determine the associated depth one foliation of $M$.

Consider any orbit $\gamma$ of $\phi$. If $\gamma$ intersects $\partial N$, then it intersects $L$ by the spinning construction. If $\gamma$ is a closed orbit contained in $N$, it pairs positively with $\eta$ by assumption, so it intersects $\Sigma$ and also $L$. If $\gamma$ is a non-closed bi-infinite orbit contained in $N$, then it is contained in $\mr M_{\mr S}$, and the sequence of tetrahedra it crosses is a bi-infinite directed path in $\Gamma \cut S$. Since we can take the cocycle defining (a multiple of) $\mr\Sigma$ to be positive on each directed edge, $\gamma$ has non-trivial intersection with $\mr\Sigma$. Therefore, $\gamma$ intersects $L$ non-trivially.
\end{proof}

\section{Applications to foliation cones}
The goal of this section is to prove \Cref{th:intro_cone}, which we recall here:

\begin{theorem}
Let $\varphi$ be a pseudo-Anosov flow on a closed atoroidal $3$-manifold $M$ and let $S$ be a closed almost transverse surface. The cone in $H^1(M\cut S)$
consisting of positive classes, when nonempty, is a foliation cone of $M\cut S$.
\end{theorem}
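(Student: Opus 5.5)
Write $N=M\cut S$ and let $\mc C_+\subset H^1(N;\R)$ be the cone of positive classes. $\mc C_+$ is an open convex cone, since it is cut out by finitely many strict inequalities. Indeed, by \Cref{lem:pullback} every closed orbit in $N$ is homotopic in $N$ to a directed cycle of the finite graph $\Gamma\cut S$, so positivity only needs to be checked on the finitely many simple directed cycles. Hence $\mc C_+$ is a rational polyhedral open cone. By the proof of \Cref{thm:class}, every integral class in $\mc C_+$ is represented by a depth one foliation of $N$ transverse to the induced semiflow, with compact leaves exactly $\partial_\pm N$. That is, every integral point of $\mc C_+$ is foliated, so $\mc C_+$ lies in the union of the Cantwell--Conlon foliation cones. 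Since $\mc C_+$ is connected and open and the foliation cones are disjoint open cones, $\mc C_+$ lies in a single foliation cone $\mc F$. (Their foliated integer points are disjoint, and a connected open set meeting two disjoint open cones would have to pass through a non-foliated rational ray.)

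The main step is the reverse inclusion $\mc F\subseteq \mc C_+$. I would argue by contradiction: suppose $\mc F\not\subseteq \mc C_+$. Since $\mc F$ is convex and open and meets $\mc C_+$, it contains a class on $\partial\mc C_+$. Such a class $\eta_0$ is nonnegative but vanishes on some closed orbit $\gamma$ in $N$. Perturbing within $\mc F\cap\partial \mc C_+$ we can moreover take $\eta_0$ rational, so after scaling it is an integral foliated class. Let $\mc G$ be a depth one foliation of $N$ dual to $\eta_0$, fibering the interior over $S^1$ with leaves spiraling onto $\partial_\pm N$.

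To reach a contradiction, I would use the Cantwell--Conlon characterization of a foliation cone as the dual of the cone spanned by the homology directions (asymptotic cycles) of a suitable endperiodic flow. Equivalently, using \cite{landry2023endperiodic}, each foliation cone is the positive dual of the closed orbits of the endperiodic first return map of any of its depth one foliations. Take any depth one leaf $L$ of a foliation from $\mc C_+$ constructed in \Cref{thm:class}; it is transverse to $\phi$. The semiflow then gives an endperiodic first return map on $L$, and $\mc F$ is the set of classes positive on its periodic orbits, which are exactly the closed orbits of $\phi$ in $N$. Hence $\mc F=\mc C_+$, because the same finite set of closed orbits cuts out both cones. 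This contradicts the existence of $\eta_0$.

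I expect this last step to be the main obstacle: identifying the Cantwell--Conlon cone containing a transverse depth one foliation with the dual of the closed orbits of the transverse semiflow. If it is easier to avoid invoking that identification directly, I would first try a direct argument. Homotope $\gamma$ to be transverse to $\mc G$ using the Handel--Miller theory of the monodromy of $\mc G$, noting that it lies in the same cone as a transverse foliation. This would force $\eta_0(\gamma)>0$, the desired contradiction.
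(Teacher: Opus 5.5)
Your first half is essentially sound, and it makes explicit an inclusion the paper leaves implicit. It uses the finiteness of the defining inequalities via \Cref{lem:pullback} and the fact, from the proof of \Cref{thm:class}, that integral positive classes are foliated, so $\mc C_+$ lies in a single foliation cone $\mc F$. The gap is the reverse inclusion $\mc F\subseteq\mc C_+$. This is the whole content of the paper's proof, and you assert it rather than prove it. Your claim is that a foliation cone is the positive dual of the periodic orbits of the first return map of \emph{any} semiflow transverse to one of its depth one foliations. That is not true in general. The cone depends only on the isotopy class of the monodromy, and it is cut out by the periodic orbits of a special representative: the Handel--Miller map, or the spun pseudo-Anosov map of \cite{landry2023endperiodic}. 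An arbitrary endperiodic representative can have extra, inessential periodic orbits on which other classes of the cone are negative. So you must show that every closed orbit of $\phi$ in $N$ is essential, i.e.\ homotopic in $N$ to a closed orbit of the spun pseudo-Anosov semiflow. Your class $\eta_0$ never gets used. Your fallback (homotoping $\gamma$ to be transverse to $\mc G$ via Handel--Miller theory) needs exactly this same fact, and lying in the same cone as a foliation transverse to $\phi$ does not supply it.

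The missing idea is to use that $\phi$ is pseudo-Anosov, which is how the paper argues. Take a depth one leaf $L$ transverse to $\phi$ with first return map $f$. If a power of $f$ fixes a point $p$ on a closed orbit, the lift $\wt f$ fixing a lift $\wt p$ has multi sink-source dynamics on $\partial\wt L$ by \cite[Proposition 8.2]{LMT_UC}. By \cite[Theorem 7.1, Corollary 7.2]{landry2023endperiodic}, $N$ embeds in a manifold with a suspension pseudo-Anosov flow $\phi_h$. Its restriction to $N$ is transverse to a depth one foliation for every foliated class of $\mc F$, and its first return map $f_h$ on $L$ is homotopic to $f$. Lifting the homotopy gives a lift of $f_h$ with the same action on $\partial\wt L$. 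That lift therefore also has multi sink-source dynamics, hence a fixed point by \cite[Proposition 4.15]{landry2023endperiodic}. So each closed orbit of $\phi$ in $N$ is homotopic in $N$ to a closed orbit of $\phi_h|_N$, and every foliated class in $\mc F$ is positive on it. Since positivity is open and foliated classes are dense in $\mc F$, this gives $\mc F\subseteq\mc C_+$.
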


\begin{proof}
Set $N = M\cut S$ as before. The hypothesis that there is a positive class in $H^1(N)$, together with \Cref{thm:depth-one}, implies that there is a depth one foliation $\mc F$ of $M$, having $S$ as its compact leaves, so that $\phi$ is almost transverse to $\mc F$. Replace $\varphi$ with its minimal dynamic blowup so that it is transverse to $\mc F$, let $L$ be a depth one leaf of $\mc F$, and let $f \colon L \to L$ be its first return map. The intersection of each closed orbit $\gamma \subset N$ of $\varphi$  with $L$ is a finite collection of points that are periodic under $f$. If we replace $f$ with a power that fixes such a point $p$ (and its stable/unstable half-leaves) and let $\wt f$ be a lift of $f$ to the universal cover $\wt L$ of $L$ that fixes a point $\wt p$ in the preimage of $p$, then \cite[Proposition 8.2]{LMT_UC} states that $\wt f$ acts with multi sink-source dynamics on the hyperbolic boundary $\partial \wt L$ of $\wt L$. This means that it has at least $4$ fixed points that alternate between local attractors and repellers.

Now let $\mc C$ be the foliation cone of $N$ that contains the depth one foliation $\mc F|_N$, the restriction of $\mc F$ to $N$. By construction, it is transverse to the restricted semiflow $\varphi|_N$.
According to Theorem 7.1 and Corollary 7.2 of \cite{landry2023endperiodic}, there is an embedding $i \colon N \to M_h$ into a closed atoroidal $3$-manifold $M_h$ so that $M_h$ admits a suspension pseudo-Anosov flow $\phi_h$ with the property that each foliated class in $\mc C$ is represented by a depth one foliation that is transverse to $\varphi_h|_N$. In particular, $\varphi_h|_N$ is transverse to $\mc F_N$ and so it also determines a first return map $f_h \colon L \to L$. In the language of \cite{landry2023endperiodic}, $f_h$ is a \emph{spun pseudo-Anosov map}.

Since $f \colon L \to L$ and $f_h \colon L \to L$ are each representatives of the monodromy of the depth one foliation $\mc F|_N$, they are homotopic homeomorphisms. Moreover, for any lift $\wt f$ of $f$ we can lift a homotopy from $f$ to $f_h$ to obtained a lift $\wt f_h$ of $f_h$ with the same action on the boundary $\partial L$. (These are foundational result of Cantwell--Conlon from \cite{CaCo13}.) In particular, if we choose a lift $\wt f$ to fix a point $\wt p$ (and its stable/unstable half-leaves) as above, the associated lift $\wt f_h$ of $\wt f$ will also act with multi sink-source dynamics on $\partial L$. But then by Proposition 4.15 of \cite{landry2023endperiodic}, $\wt f_h$ also has a unique fixed point $\wt q$ in $\wt L$. Translating this into a statement about closed orbits of the associated flows, we have that every closed orbit of $\varphi_N$ is homotopic in $N$ to a closed orbit of $\varphi_h|_N$. 

Now let $\eta$ be an arbitrary foliated class in $\mc C$. As above, it is represented by a depth one foliation $\mc G$ of $N$ that is transverse to $\varphi_h|_N$. In particular, it is positive on the closed orbits of $\varphi_h|_N$. But since each closed orbit of $\varphi_N$ is homotopic in $N$ to a closed orbit of $\varphi_h|_N$, $\eta$ is also positive on the closed orbits of $\varphi_N$. Hence, $\eta$ is a positive class (with respect to $\phi_N$). Since positivity is an open condition and foliated classes are dense in $\mc C$, the proof is complete.
\end{proof}

We also characterize when the cone of positive classes is nonempty.

\begin{proposition}\label{prop:nonempt}
The cone of positive classes is nonempty if and only if there is a foliated nonnegative class.
\end{proposition}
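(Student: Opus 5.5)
The forward direction is immediate from the preceding theorem. If the cone of positive classes is nonempty, it is a foliation cone $\mc C$ of $N = M\cut S$. Since foliation cones are open rational polyhedral cones and their integer points are exactly the foliated classes, $\mc C$ contains an integral point. That point is a foliated class which is positive, and in particular nonnegative.

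For the converse, suppose $\eta$ is foliated and nonnegative. The plan is to find a positive class by perturbing $\eta$ inside the foliation cone $\mc C$ containing it. I would proceed in three steps.

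\textbf{Step 1.} Since $\mc C$ is open and contains $\eta$, every class $\eta + \epsilon\beta$ with $\beta \in H^1(N)$ and $\epsilon$ small and rational (after scaling, integral) is still foliated.

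\textbf{Step 2.} It therefore suffices to find a single class $\beta$ such that $\eta + \epsilon\beta$ is positive on every closed orbit of $\varphi$ in $N$ for some small $\epsilon > 0$. Positivity is not automatically an open condition, because there are infinitely many closed orbits. So I would work with the dual graph as in \Cref{lem:pullback}. Every closed orbit in $N$ is homotopic in $N$ to a directed cycle of $\Gamma \cut S$, and that graph is finite. By \cite[Lemma 5.10]{LMT20}, $i^*\eta$ is represented by a nonnegative cocycle $c$ on $\Gamma \cut S$. The task then reduces to showing two things: that the directed cycles on which $\eta$ vanishes are the ones supported on the finite subgraph $Z$ of edges with $c = 0$, and that some class $\beta$ is positive on all directed cycles of $Z$. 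Given such a $\beta$, positivity of $\eta + \epsilon\beta$ on all directed cycles follows for small $\epsilon$ by a finite cycle-basis argument: each directed cycle decomposes into simple directed cycles, of which there are finitely many, and a simple cycle either meets an edge with $c \geq 1/n$ or lies in $Z$.

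\textbf{Step 3 (main obstacle).} The hard part is producing $\beta$, and this is where the foliated hypothesis must enter. Realize $\eta$ by a depth one foliation $\mc G$ of $N$ with leaves $\partial_\pm N$. I would adapt the argument of the preceding theorem using the embedding $N \hookrightarrow M_h$ from \cite{landry2023endperiodic}. Classes in $\mc C$ are positive on the closed orbits of $\varphi_h|_N$. The question is then whether every closed orbit of $\varphi|_N$ is homotopic to a closed orbit of $\varphi_h|_N$.

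In the earlier proof this used transversality of $\varphi$ to a foliation in $\mc C$, which is exactly what we are now trying to establish. So I would instead try to compare directly: show that for a closed orbit $\gamma \subset N$ with $\eta(\gamma)=0$, the curve $\gamma$ still pairs positively with every foliated class near $\eta$. Concretely, I would choose $\beta$ in the interior of $\mc C$ near $\eta$, represented by a depth one foliation transverse to $\varphi_h|_N$, and argue via multi sink-source dynamics on the universal cover of the depth one leaf that the cycles of $Z$ correspond to periodic points of the monodromy of $\mc G$. Such periodic points are closed orbits of $\varphi_h|_N$, on which $\beta$ is positive.

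If this comparison fails, the fallback is a Farkas-type alternative on the finite graph $Z$. If no class is positive on all directed cycles of $Z$, then some nonnegative combination of these cycles is nullhomologous in $N$. That would contradict the foliated class $\eta$ being in the interior of a foliation cone, since a foliated class must be strictly positive on a homologically nontrivial transverse asymptotic cycle of that kind.
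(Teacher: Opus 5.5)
Your forward direction and the finite-graph reduction in Step 2 are sound. Step 3, however, is the only place where the foliated hypothesis is supposed to enter, and it does not work, so the converse is not proved.

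The comparison you propose cannot succeed as stated. A directed cycle of $Z$ has $\eta$-value $0$. By contrast, a closed curve coming from a periodic point of the monodromy of a depth one foliation $\mc G$ representing $\eta$ crosses the depth one leaf only positively, so it has positive $\eta$-value. Hence cycles of $Z$ can never be homotopic to such orbits. Nor can they be homotopic to closed orbits of $\varphi_h|_N$, on which $\eta$ is positive because $\eta$ is foliated and lies in $\mc C$. Moreover, the multi sink-source argument in the proof of \Cref{th:intro_cone} starts from a first return map of $\varphi$ itself to a depth one leaf. As you note, that is exactly what you do not have.

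The Farkas fallback gives no contradiction either. It yields a nonzero nonnegative combination of $\varphi$-orbits in $Z$ that is nullhomologous in $N$, and every class, foliated or not, vanishes on it. The Cantwell--Conlon description of $\mc C$ constrains the asymptotic cycles of a flow transverse to the foliations of $\mc C$ (such as $\varphi_h|_N$). Nothing in your argument places the $\varphi$-orbits of $Z$ among those, and establishing that relation is the whole problem.

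The missing idea is to make a \emph{$\varphi$-transverse} representative of $\eta$ into a leaf of a depth one foliation. The paper does this in the following steps.
\begin{enumerate}
\item Take the essential surface $\Sigma$ from \Cref{thm:transverse-surface}, together with the surface $\Xi$ obtained by truncating a depth one leaf of a foliation representing $\eta$.
\item Lift both to the $\Z$-cover of $N$ dual to $\eta$, enlarge them so that $\partial \wt\Sigma = \partial\wt\Xi$, and show the lifts are isotopic rel boundary via \cite[Theorem 10.2]{Hempel3manifolds}.
\item Conclude that the leaf $L$ obtained by spinning $\Sigma$ is transverse to $\varphi$ and satisfies $\mathrm{int}(N)\cut L\cong L\times[0,1]$.
\item Apply the argument of \cite[Lemma 3.2]{HT2026} to get flow-parallel annuli cutting $L$ into disks. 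Then every forward flowline of $\varphi|_N$ returns to $L$, so $\eta$ itself is positive.
\end{enumerate}
So no perturbation is needed: a foliated nonnegative class is already positive, i.e.\ $Z$ has no directed cycles. Any correct completion of your Step 3 would have to prove essentially this.
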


\begin{proof}
Let $\eta$ be a non-negative class in $H^1(N)$. \Cref{thm:transverse-surface} gives a transverse depth one lamination $\mathcal{L}$ with $\mathcal{L}^0 = \partial N$ representing $\eta$ and almost transverse to $\phi$. A depth one leaf of $\mathcal{L}$ is obtained from spinning a properly embedded incompressible surface $\Sigma$ representing $\eta$ in $N$. Note that by the spinning construction, the choice of an incompressible $\Sigma$ is not unique. For example, we can enlarge $\Sigma$ by peeling off layers from $\partial N$. This flexibility is used later in the proof.

If $\eta$ is foliated, then there is a depth one foliation $\mathcal{F}$ representing $\eta$. Similarly we get a (non-unique) properly embedded incompressible surface $\Xi\subseteq N$ such that a depth one leaf $F$ is obtained from $\Xi$ by spinning. In the interior of $N$, the foliation $\mathcal{F}$ is a fibration over $S^1$ with fiber $F$, and there is a suspension flow $\psi$ transverse to $\mathcal{F}$ and $\partial N$. The surface $\Xi$ can be obtained from truncating $F$ by a spiraling neighborhood of $\partial N$, and push the boundary of the remaining subsurface $C\subseteq F$ to $\partial N$ along flowlines of $\psi$. The subsurface $C$ is also called a core of the monodromy. Let $\mathcal{U}^+$ (resp. $\mathcal{U}^-$) be the subset of $F$ consisting of points that limit to $\partial ^+N$ (resp. $\partial^-N$) under $\psi$ in forward (resp. backward) time.

Let $\wt{N}$ be the $\Z$-cover of $N$ dual to $\eta$. Then $\wt{N}$ is homeomorphic to
\[
\mathcal{U}^-\times\{-\infty\}\,\,\cup\,\, F\times\R \,\,\cup\,\, \mathcal{U}^+\times\{+\infty\}
\]
with the lifted suspension flow $\wt{\psi}$ in the $\R$-direction (see, for example, \cite[Section 3]{field2023end}). Since both $\Xi$ and $\Sigma$ represent $\eta$, they lift to properly embedded surfaces of $\wt N$ that we denote by $\wt{\Sigma}$ and $\wt{\Xi}$, respectively.
By the construction of $\Xi$, the interior of $\wt{\Xi}$ is contained in $C\times\R$, and under the projection $p \colon \wt{N}\to F$, $\wt{\Xi}$ maps to $C$ homeomorphically. Up to enlarging $\Sigma$ and $\Xi$ (and hence also $C$) as indicated previously, we may assume that $\partial\wt{\Sigma}=\partial\wt{\Xi}$ (see for example the discussion in \cite{landry2023endperiodic} preceding Lemma 3.7) and that $\wt{\Sigma}$ is contained in $\overline{C\times\R}$.
By identifying $\Sigma$ with $\wt\Sigma$ and composing the inclusion $\wt{\Sigma}\hookrightarrow\overline{C\times\R}$ with the projection $p$, we get a map $h\colon\Sigma\to C$ that restricts to a homeomorphism on boundary. The induced map $h_*$ on $\pi_1$ is injective because $\Sigma$ is incompressible. It is also surjective because the condition on $h|_{\partial\Sigma}$ forbids it to lift to a map to any non-trivial cover of $\Xi$. Therefore, $h$ is homotopic to a homeomorphism. Note $\wt\Sigma$ separates $C\times\R$ into two components, and the inclusion into each component induces an isomorphism on $\pi_1$ by Van Kampen's theorem. By \cite[Theorem 10.2]{Hempel3manifolds}, $\wt\Xi$ is isotopic to $\wt\Sigma$ rel boundary. 
In particular, the leaf $L$ obtained by spinning $\Sigma$ can also be completed into a depth one foliation in $N$. Therefore, $K=\mathrm{int}(N)\cut L$ is homeomorphic to $L\times [0,1]$. Recall that $\Sigma$ and $S$ are both transverse to some almost pseudo-Anosov flow $\phi$ in $M$, so $L$ can also be made transverse to $\phi$ as well.

We claim that $L$ has a well-defined first return map under the semi-flow $\phi|_S$. In other words, every forward flowline of $\phi|_S$ intersects $L$. This implies that the original class $\eta$ is positive, proving the proposition.

To see the claim, consider the universal cover $\wt{M}$ of $M$ and let $\wt{K}$ be a connected component of the preimage of $K$. To be precise, there is a map $\iota\colon K\cong L\times[0,1]\to N$ mapping $L\times\{0\}$ and $L\times\{1\}$ to $L\subset N$, and we consider a lift $\wt{\iota}\colon \wt{L}\times[0,1]\to\wt{M}$ with image $\wt{K}$. The boundary of $\wt{K}$ are two lifts $\wt{L_1}$ and $\wt{L_2}$ of $L$ in $\wt{M}$, corresponding to $\iota(\wt L\times\{0,1\})$. There exist flowlines of the lifted flow from $\wt{L_1}$ and $\wt{L_2}$ because the distance between them can get arbitrarily small in a spiraling neighborhood. In other words, the projections of $\wt{L_1}$ and $\wt{L_2}$ to the orbit space $\orb$ are not disjoint. The proof of \cite[Lemma 3.2]{HT2026} then shows that for any annulus $A$ in $K$ going from $L\times\{0\}$ to $L\times\{1\}$, the image $\iota(A)$ can be homotoped rel $L$ to be flow parallel, i.e. foliated by flowlines going from one boundary to the other. Since $K$ is a product, one can find a system of flow parallel annuli going from $L$ to itself, such that the boundary of the annuli cut $L$ into disks. The flow in the complement of these annuli has to be trivial, so every flowline starting from a point in $L$ must hit $L$ in the future. This completes the proof of the claim.\qedhere

\end{proof}

\bibliographystyle{alpha}
\bibliography{f&p.bib}

\end{document}